\newtheoremstyle{bthm}{\baselineskip}{\baselineskip}{\slshape}{}{\bfseries}{}{ }{}
\newtheoremstyle{bex}{\baselineskip}{\baselineskip}{}{}{\sffamily}{:}{\newline }{}
\theoremstyle{bthm}
\newtheorem{thm}{Theorem}[section]
\newtheorem{cor}[thm]{Corollary}
\newtheorem{prop}[thm]{Proposition}
\theoremstyle{bex}
\begin{document}
\begin{titlepage}
\title{On indicated coloring of some classes of graphs}
\author{P. Francis$^{1}$, S. Francis Raj$^{2}$ and M. Gokulnath$^3$}
\date{{\footnotesize Department of Mathematics, Pondicherry University, Puducherry-605014, India.}\\
{\footnotesize$^{1}$: selvafrancis@gmail.com\ $^{2}$: francisraj\_s@yahoo.com\ $^3$: gokulnath.math@gmail.com }}
\maketitle
\renewcommand{\baselinestretch}{1.3}\normalsize
\begin{abstract}
Indicated coloring is a type of game coloring in which two players collectively color the vertices of a
graph in the following way. In each round the first player (Ann) selects a vertex, and then
the second player (Ben) colors it properly, using a fixed set of colors. The goal of Ann is to
achieve a proper coloring of the whole graph, while Ben is trying to prevent the realization of
this project. The smallest number of colors necessary for Ann to win the game on a graph $G$
(regardless of Ben's strategy) is called the indicated chromatic number of $G$, denoted by
$\chi_i(G)$. In this paper, we obtain structural characterization of connected $\{P_5,K_4,Kite,Bull\}$-free graphs which contains
an induced $C_5$ and connected $\{P_6,C_5, K_{1,3}\}$-free graphs that contains an induced $C_6$. Also, we prove that $\{P_5,K_4,Kite,Bull\}$-free graphs that contains an induced $C_5$ and   $\{P_6,C_5,\overline{P_5}, K_{1,3}\}$-free graphs which contains an induced $C_6$ are $k$-indicated colorable for all $k\geq\chi(G)$. In addition,  we show that $\mathbb{K}[C_5]$ is $k$-indicated colorable for all $k\geq\chi(G)$ and as a consequence, we  exhibit that $\{P_2\cup P_3, C_4\}$-free graphs, $\{P_5,C_4\}$-free graphs are $k$-indicated colorable for all $k\geq\chi(G)$. This partially answers one of the questions which was raised by A. Grzesik in \cite{and}.
\end{abstract}
\noindent
\textbf{Key Words:} Game chromatic number, Indicated chromatic number, $P_5$-free graphs.\\
\textbf{2000 AMS Subject Classification:} 05C75

\section{Introduction}

All graphs considered in this paper are simple, finite and undirected. For any positive integer $k$, a proper $k$-coloring of a graph $G$ is a mapping $c$ : $V(G)\rightarrow\{1,2,\ldots,k\}$ such that for any two adjacent vertices $u,v\in V(G)$, $c(u)\neq c(v)$. A graph is said to be $k$-colorable if it admits a proper $k$-coloring. The chromatic number $\chi(G)$ of a graph $G$ is the smallest $k$ such that $G$ is $k$-colorable. In this paper, $P_n,C_n$ and $ K_n$ respectively denotes the path, the cycle and the complete graph on $n$ vertices. For $S,T\subseteq V(G)$, let $\langle S\rangle$ denote the subgraph induced by $S$ in $G$ and let $[S,T]$ denote the set of all edges with one end in $S$ and the other end in $T$. $ [S, T ]$ is said to be complete if every vertex in $S$ is adjacent with every vertex in $T$. For any graph $G$, let $\overline{G}$ denote the complement of $G$.

Let us recall some of the definitions which are required for this paper. Let $\mathcal{F}$ be a family of graphs. We say that $G$ is $\mathcal{F}$-free if it contains no induced subgraph which is isomorphic to a graph in $\mathcal{F}$. For two vertex-disjoint graphs $G_1$ and $G_2$, the join of $G_1$ and $G_2$, denoted by $G_1+G_2$, is the graph whose vertex set $V(G_1+G_2) = V(G_1)\cup V(G_2)$ and the edge set $E(G_1+G_2) = E(G_1)\cup E(G_2)\cup\{xy: x\in V(G_1),\ y\in V(G_2)\}$. In this paper, we write $H\sqsubseteq G$ if $H$ is an induced subgraph of $G$. Next, the coloring number of a graph $G$, denoted by col$(G)$, is defined by col$(G)=1+\max\limits_{H\subseteq G}\delta(H)$. By Szekeres-Wilf's inequality, $\chi(G)\leq \mathrm{col}(G)$.

A game coloring of a graph is a coloring of the vertices in which two players Ann and Ben are alternatively coloring the vertices of the graph $G$ properly by using a fixed set of colors $C$. The first player Ann is aiming to get a proper coloring of the whole graph, where as the second player Ben is trying to prevent the realization of this project. If all the vertices are colored then Ann wins the game, otherwise Ben wins (that is, at that stage of the game there appears a block vertex. A $block$ vertex means an uncolored vertex which has all colors from $C$ on its neighbors). The minimum number of colors required for Ann to win the game on a graph $G$ irrespective of Ben's strategy is called the game chromatic number of the graph $G$ and it is denoted by $\chi_g(G)$. There has been a lot of papers on  game coloring. See for instance, \cite{gua,sek,wu,zhu}. The idea of indicated coloring was introduced by A. Grzesik  in \cite{and} as a slight variant of the game coloring in the following way: in each round the first player Ann selects a vertex and then the second player Ben colors it properly, using a fixed set of colors. The aim of Ann as in game coloring is to achieve a proper coloring of the whole graph $G$, while Ben tries to ``block'' some vertex.
The smallest number of colors required for Ann to win the game on a graph $G$ is known as the indicated chromatic number of $G$ and is denoted by $\chi_i(G)$. Clearly from the definition we see that $\omega(G)\leq\chi(G)\leq\chi_i(G)\leq\Delta(G)+1$. For a graph $G$, if Ann has a winning strategy while using $k$ colors, then we say that $G$ is $k$-indicated colorable.


In \cite{zhu}, X. Zhu has asked the following question for game coloring. Whether increasing the number of colors will favor Ann? That is, if Ann has a winning strategy using $k$ colors, will Ann have a winning strategy using $k+1$ colors? The same question was asked by A. Grzesik for indicated coloring. The question can be equivalently stated as ``Whether $G$ is $k$-indicated colorable for every $k\geq\chi_i(G)$''.  He also showed by an example that the increase in number of colors does not make life simple for Ann rather it makes it much harder. There has been already some partial answers to this question. For instance in \cite{fran,pan}, R. Pandiya Raj et.al.  have shown that chordal graphs, cographs, complement of bipartite graphs, $\{P_5,K_3\}$-free graphs, $\{P_5,$paw$\}$-free graphs, and $\{P_5,K_4-e\}$-free graphs are $k$-indicated colorable for all $k\geq\chi(G)$.
In addition, M. Laso$\acute{\mathrm{n}}$ in \cite{las} has obtained the indicated chromatic number of matroids. In this paper, we obtain structural characterization of connected $\{P_5,K_4,Kite,Bull\}$-free graphs which contains an induced $C_5$ and connected $\{P_6,C_5, K_{1,3}\}$-free graphs which contains an induced $C_6$. Also, we prove that $\{P_5,K_4,Kite,Bull\}$-free graphs that contains an induced $C_5$ and   $\{P_6,C_5,\overline{P_5}, K_{1,3}\}$-free graphs which contains an induced $C_6$ are $k$-indicated colorable for all $k\geq\chi(G)$. In addition,  we show that $\mathbb{K}[C_5]$,  the complete expansion of $C_5$, is $k$-indicated colorable for all $k\geq\chi(G)$ and as a consequence, we exhibit that $\{P_2\cup P_3, C_4\}$-free graphs, $\{P_5,C_4\}$-free graphs are $k$-indicated colorable for all $k\geq\chi(G)$.

Notations and terminologies not mentioned here are as in \cite{west}.

\section{Structural characterization of some free graphs and their indicated coloring}

In \cite{boh}, it has been shown that the game chromatic number of  a bipartite graph can be arbitrarily large when compared to the chromatic number which is equal to 2. But while considering the indicated chromatic number of a bipartite graph $G$, A. Grzesik in \cite{and} has shown that $\chi_i(G)=2$.

\begin{thm}$\mathrm{(\cite{and}})$
\label{bi}Every bipartite graph is $k$-indicated colorable for every $k\geq2$.
\end{thm}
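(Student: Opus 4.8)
The plan is to give Ann an explicit winning strategy on a bipartite graph $G$ with bipartition $(A,B)$ using $k\ge 2$ colours. Since distinct components of $G$ share no edges, Ben's moves in one component never affect another, so Ann may finish each component before starting the next; I therefore assume $G$ is connected. The central bookkeeping device is, for each uncolored vertex $v$, its \emph{level}: the number of distinct colours appearing on its already-coloured neighbours. Ben can colour $v$ precisely when its level is at most $k-1$, and $v$ becomes a block vertex exactly when its level reaches $k$. Ann's task is thus to keep every uncolored vertex at level at most $k-1$ until it is coloured.

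The engine of the argument is a structural consequence of bipartiteness: when Ann selects a vertex $x\in A$ and Ben colours it, only the levels of vertices in $B$ can rise, and each rises by at most one. In particular, two uncolored vertices of the same part are never adjacent, so colouring a vertex can never push a same-part vertex toward a block. For a vertex $v$ at level $k-1$ I track its unique \emph{missing colour}, the single colour it can still receive and which Ben is therefore forced to use. I would then show that a block can arise only in one configuration: when adjacent vertices $u\in A$ and $w\in B$ are simultaneously at level $k-1$ and share the same missing colour $c$, since colouring either one forces $c$ on it and thereby blocks the other. Call such a pair a \emph{bad pair}; note that every bad pair crosses the bipartition.

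Ann's strategy is to colour an endangered vertex first: whenever some uncolored vertex has level $k-1$, she colours one of these, and otherwise she colours an arbitrary vertex. Because the level-$(k-1)$ vertices of a single part are mutually non-adjacent, she can clear all current level-$(k-1)$ vertices of one part in consecutive turns; since colouring part-$A$ vertices only promotes part-$B$ vertices, after clearing one side the remaining endangered vertices all lie in the opposite side, and she repeats this alternately. The key invariant I would maintain is that at the start of each of Ann's turns no bad pair is present. Granting the invariant, the move just described blocks no vertex, because a blocked neighbour would be exactly the second member of a bad pair.

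The main obstacle is to verify that this invariant is genuinely preserved, i.e.\ that Ann can always select an endangered vertex whose colouring creates no new bad pair, so that the strategy never deadlocks. The subtlety is that colouring a level-$(k-1)$ vertex $x\in A$ may promote a neighbour $w\in B$ to level $k-1$, and $w$ could then form a bad pair with a previously endangered vertex of $A$; controlling the missing colours of the newly promoted vertices, and ordering the way Ann clears a side so as to forbid this, is the delicate point. I expect to resolve it by induction on the number of uncolored vertices, using that each move alters the levels of only one part and that within a part all vertices are pairwise non-adjacent, so that the set of uncolored vertices sharing any fixed missing colour can be kept independent throughout the game.
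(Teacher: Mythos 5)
The paper does not actually prove this statement --- it is quoted from Grzesik \cite{and} --- so your argument has to stand entirely on its own. Your setup is sound as far as it goes: under the ``present an endangered vertex whenever one exists'' policy, Ben's move on a level-$(k-1)$ vertex is forced, a free move of Ben (made only when every uncoloured vertex has level at most $k-2$) can raise levels only to $k-1$, and consequently a vertex gets blocked precisely when a bad pair is present at the start of one of Ann's turns. You have therefore correctly reduced the theorem to the assertion that Ann can schedule the forced moves so that no bad pair ever forms. But you do not prove this assertion: you name it as ``the main obstacle'', describe why the obvious clearing order threatens to violate it, and then write that you ``expect to resolve it by induction''. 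That assertion is the entire content of the theorem. Everything you do establish (only one side's levels rise per move, same-part vertices are non-adjacent, forced moves on level-$(k-1)$ vertices) is cheap bookkeeping, and bipartiteness must enter in some essential quantitative way exactly in the step you omit.

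The gap is not cosmetic. When Ann clears the endangered vertices of $A$ one at a time, the forced colour $c_1$ on $u_1\in A$ can lift a common neighbour $w\in B$ of $u_1$ and a still-uncoloured endangered $u_2\in A$ from level $k-2$ with missing colours $\{c,c_1\}$ to level $k-1$ with missing colour $c$, which may be exactly $u_2$'s missing colour --- a bad pair created in mid-cascade, after which Ann loses no matter what she does, since colouring either member of the pair is forced and blocks the other. You acknowledge this scenario but give no ordering rule that excludes it, and the auxiliary invariant proposed in your last sentence (uncoloured vertices sharing a missing colour form an independent set) is, for level-$(k-1)$ vertices, just a restatement of ``no bad pair exists'', so it cannot serve as an inductive hypothesis that proves itself. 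To repair the argument you would need an explicit rule for which endangered vertex to present next, together with a proof that the rule preserves the invariant throughout a cascade of forced moves; absent that, this is a plausible plan with the decisive lemma missing, not a proof. (Since the paper itself only cites \cite{and} for this theorem, the honest alternatives are to supply that lemma in full or simply to cite the source.)
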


Next, let us recall the definition of complete expansion and independent expansion of a graph $G$. Let $G$ be a graph on $n$ vertices $v_1,v_2,\ldots,v_n$, and let $H_1,H_2,\ldots,H_n$ be $n$ vertex-disjoint graphs. An expansion
$G(H_1,H_2,\ldots,H_n)$ of $G$ is the graph obtained from $G$ by\\
(i)  replacing each $v_i$ of $G$ by $H_i$, $i=1,2,\ldots,n$, and\\
(ii) by joining every vertex in $H_i$ with every vertex in $ H_j$ whenever $v_i$ and $v_j$ are adjacent in $G$.

For $i\in\{1,2,\ldots,n\}$, if $H_i=K_{m_i}$, then $G(H_1,H_2,\ldots,H_n)$ is said to be a complete expansion of $G$ and is denoted by $\mathbb{K}[G](m_1,m_2,\ldots,m_n)$ or $\mathbb{K}[G]$. For $i\in\{1,2,\ldots,n\}$, if  $H_i=\overline{K_{m_i}}$, then $G(H_1,H_2,\ldots,H_n)$ is said to be an independent expansion of $G$ and is denoted by $\mathbb{I}[G](m_1,m_2,\ldots,m_n)$ or $\mathbb{I}[G]$.

In \cite {sum}, D. P. Sumner studied the structural property of $\{P_5,K_3\}$-free graphs.

\begin{thm}\label{p5k3}
$\mathrm{(\cite{sum}})$ Let $G$ be a $\{P_5,K_3\}$-free graph. Then each component of $G$ is either bipartite or $\mathbb{I}[C_5](m_1,m_2,\ldots,m_5)$, where $m_i\geq1$ for $ i=1,2,3,4,5$.
\end{thm}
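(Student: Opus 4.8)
The plan is to argue component by component and to reduce everything to the behaviour of odd cycles. First I would record the elementary fact that a triangle-free graph is bipartite precisely when it contains no induced odd cycle, since every odd cycle contains an induced one. Because $G$ is $K_3$-free it has no induced $C_3$, and because $G$ is $P_5$-free it can contain no induced $C_{2k+1}$ with $2k+1\geq 7$: reading five consecutive vertices off such a cycle already yields an induced $P_5$. Hence the only induced odd cycle available in $G$ is $C_5$. Consequently any component $H$ that contains no induced $C_5$ has no induced odd cycle at all and is therefore bipartite, which disposes of the first alternative.

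It remains to show that a component $H$ containing an induced $C_5$, say on $v_1v_2v_3v_4v_5$ (indices read modulo $5$), is an independent expansion $\mathbb{I}[C_5]$. The first step is to understand how a vertex $u\notin C$ attaches to $C$. Triangle-freeness forbids $u$ from being adjacent to two consecutive $v_i$, so $N(u)\cap V(C)$ is an independent set of $C_5$, hence one of: the empty set, a single vertex, or a pair $\{v_{i-1},v_{i+1}\}$ of vertices at distance two. The heart of the argument is to eliminate the first two possibilities and then to pin down the adjacencies among the surviving vertices.

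For the elimination I would produce explicit induced copies of $P_5$. If $u$ meets $C$ in the single vertex $v_1$, then $u\,v_1\,v_2\,v_3\,v_4$ is induced; and if $u$ lies at distance two from $C$, being adjacent to some $w$ with $N(w)\cap V(C)=\{v_1,v_3\}$ but to no $v_i$, then $u\,w\,v_1\,v_5\,v_4$ is induced. Both contradictions force every vertex of $H$ to lie on $C$ or to be adjacent to exactly a distance-two pair of $C$. I then group the vertices into blobs $B_i=\{v_i\}\cup\{u:N(u)\cap V(C)=\{v_{i-1},v_{i+1}\}\}$, which by the above partition $V(H)$ with each $|B_i|\geq 1$. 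Each $B_i$ is independent (two of its vertices together with their common neighbour $v_{i-1}$ would make a triangle), and for the same reason $[B_i,B_{i+2}]$ is empty, since a cross edge there would create a triangle through the shared vertex $v_{i+1}$.

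The step I expect to be the main obstacle is proving that consecutive blobs are \emph{completely} joined, since this is exactly where $P_5$-freeness must be exploited rather than mere triangle-freeness. Concretely, if $u\in B_2$ and $z\in B_3$ were non-adjacent, then $u\,v_1\,v_5\,v_4\,z$ would be an induced $P_5$ (all chords are ruled out by the attachment types), a contradiction; so $[B_i,B_{i+1}]$ is complete for every $i$. Assembling the three facts, namely independence of each $B_i$, empty $[B_i,B_{i+2}]$, and complete $[B_i,B_{i+1}]$, identifies $H$ with $\mathbb{I}[C_5](|B_1|,\ldots,|B_5|)$, completing the second alternative and hence the theorem.
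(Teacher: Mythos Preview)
Your argument is correct. Note, however, that the paper does not supply its own proof of this theorem: it is quoted as a known result of Sumner and used as a black box. There is therefore no ``paper's proof'' to compare against directly.

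That said, your neighbourhood-layer analysis around a fixed induced $C_5$ is exactly the template the paper employs in the proof of its Theorem~\ref{p5k4kb} (Claims~1, 2 and~5 there): classify $N(x)\cap N_0$ for $x$ at distance one, define the blobs $A_i$ (your $B_i$), and verify independence of each blob, completeness of $[A_i,A_{i+1}]$, and emptiness of $[A_i,A_{i+2}]$. The only difference is that in Theorem~\ref{p5k4kb} the forbidden configurations Kite and Bull replace the role played here by $K_3$, so the individual case checks cite different obstructions; structurally the two arguments are the same. Your preliminary reduction (ruling out induced $C_{2k+1}$ for $2k+1\geq 7$ via five consecutive cycle vertices, hence reducing the non-bipartite case to the presence of an induced $C_5$) is a clean way to set up the dichotomy and is not spelled out in the paper since the statement is merely cited.
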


Let us start this section with a structural characterization of a family of $P_5$-free graphs. The study of $P_5$-free graphs has been of interest for a lot of coloring parameters. For instance see, \cite{bac,bra2,fou}. In this direction, we would like to consider connected $\{P_5,K_4,Kite,Bull\}$-free graphs that contains an induced $C_5$. Here, the graphs Kite and Bull are shown in Figure \ref{free}.

\begin{figure}
   \centering
\psset{unit=.45in}
{
\begin{pspicture}(0,-1.18)(11.34,1.16)
\psdots[dotsize=0.16](0.86,1.08)
\psdots[dotsize=0.16](0.06,0.68)
\psdots[dotsize=0.16](1.66,0.68)
\psdots[dotsize=0.16](0.86,0.28)
\psdots[dotsize=0.16](0.06,0.68)
\psline[linewidth=0.03cm](0.86,1.08)(0.06,0.68)
\psline[linewidth=0.03cm](0.06,0.68)(0.86,0.28)
\psline[linewidth=0.03cm](0.86,0.28)(1.66,0.68)
\psline[linewidth=0.03cm](0.86,1.08)(1.66,0.68)
\psdots[dotsize=0.16](3.26,1.08)
\psdots[dotsize=0.16](2.46,0.68)
\psdots[dotsize=0.16](4.06,0.68)
\psdots[dotsize=0.16](3.26,0.28)
\psdots[dotsize=0.16](2.46,0.68)
\psline[linewidth=0.03cm](3.26,1.08)(2.46,0.68)
\psline[linewidth=0.03cm](2.46,0.68)(3.26,0.28)
\psline[linewidth=0.03cm](3.26,0.28)(4.06,0.68)
\psline[linewidth=0.03cm](3.26,1.08)(4.06,0.68)
\psdots[dotsize=0.16](5.66,1.08)
\psdots[dotsize=0.16](4.86,0.68)
\psdots[dotsize=0.16](6.46,0.68)
\psdots[dotsize=0.16](5.66,0.28)
\psdots[dotsize=0.16](4.86,0.68)
\psline[linewidth=0.03cm](5.66,1.08)(4.86,0.68)
\psline[linewidth=0.03cm](4.86,0.68)(5.66,0.28)
\psline[linewidth=0.03cm](5.66,0.28)(6.46,0.68)
\psline[linewidth=0.03cm](5.66,1.08)(6.46,0.68)
\psdots[dotsize=0.16](8.06,1.08)
\psdots[dotsize=0.16](8.86,0.48)
\psdots[dotsize=0.16](7.26,0.48)
\psdots[dotsize=0.16](10.46,1.08)
\psdots[dotsize=0.16](11.26,0.48)
\psdots[dotsize=0.16](9.66,0.48)
\psdots[dotsize=0.16](0.86,-0.52)
\psdots[dotsize=0.16](0.86,1.08)
\psline[linewidth=0.03cm](0.86,1.08)(0.86,0.28)
\psline[linewidth=0.03cm](0.06,0.68)(0.86,-0.52)
\psline[linewidth=0.03cm](1.66,0.68)(0.86,-0.52)
\psdots[dotsize=0.16](3.26,-0.52)
\psdots[dotsize=0.16](5.66,-0.52)
\psline[linewidth=0.03cm](3.26,0.28)(3.26,-0.52)
\psline[linewidth=0.03cm](5.66,0.28)(5.66,-0.52)
\psline[linewidth=0.03cm](3.26,1.08)(3.26,0.28)
\psline[linewidth=0.03cm](4.86,0.68)(6.46,0.68)
\psdots[dotsize=0.16](7.26,-0.52)
\psdots[dotsize=0.16](8.86,-0.52)
\psdots[dotsize=0.16](9.66,-0.52)
\psdots[dotsize=0.16](11.26,-0.52)
\psline[linewidth=0.03cm](7.26,0.48)(7.26,-0.52)
\psline[linewidth=0.03cm](8.86,0.48)(8.86,-0.52)
\psline[linewidth=0.03cm](7.26,0.48)(8.86,0.48)
\psline[linewidth=0.03cm](8.06,1.08)(7.26,0.48)
\psline[linewidth=0.03cm](8.06,1.08)(8.86,0.48)
\psline[linewidth=0.03cm](7.26,-0.52)(8.86,-0.52)
\psline[linewidth=0.03cm](9.66,0.48)(9.66,-0.52)
\psline[linewidth=0.03cm](11.26,0.48)(11.26,-0.52)
\psline[linewidth=0.03cm](9.66,0.48)(10.46,1.08)
\psline[linewidth=0.03cm](10.46,1.08)(11.26,0.48)
\psline[linewidth=0.03cm](11.26,0.48)(9.66,0.48)
\usefont{T1}{ptm}{m}{n}
\rput(0.83,-0.975){\footnotesize $(\overline{P_2\cup P_3})$}
\usefont{T1}{ptm}{m}{it}
\rput(3.2059374,-0.975){\footnotesize Dart}
\usefont{T1}{ptm}{m}{it}
\rput(5.6339064,-0.975){\footnotesize Kite}
\usefont{T1}{ptm}{m}{it}
\rput(8.1039065,-0.975){\footnotesize $\overline{P_5}$}
\usefont{T1}{ptm}{m}{it}
\rput(10.444843,-0.975){\footnotesize Bull}
\end{pspicture}
\caption{Some special graphs}
\label{free}
}
\end{figure}
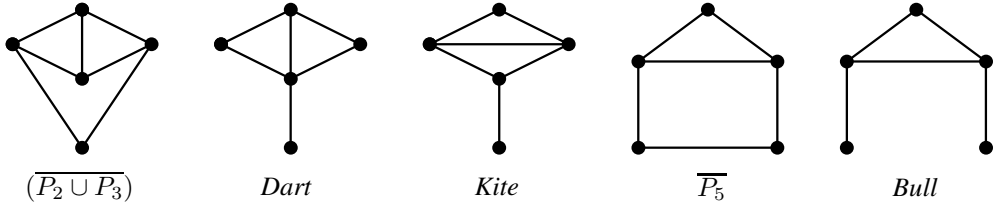

\begin{thm} \label{p5k4kb}
If $G$ is a connected $\{P_5,K_4,Kite,Bull\}$-free graph that contains an induced $C_5$, then $V(G)=V_1\cup V_2\cup V_3$ such that (1) $\langle V_2\rangle$ is a complete bipartite graph with bipartitions $B$ and $S$, (2) $\langle V_1\cup V_3\rangle$ is disjoint union of $\mathbb{I}[C_5]'s$ and bipartite graphs, (3) $[V_1,B]$ is complete, $[V_1,S]=[V_1,V_3]=[V_3,B]=\emptyset$ and (4) there exists $x^*\in S$ such that $[x^*,V_3]$ is complete.
\end{thm}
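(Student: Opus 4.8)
The plan is to fix an induced $C_5$, say $v_1v_2v_3v_4v_5v_1$ with indices read mod $5$, and to classify every other vertex $u$ by its trace $A_u=N(u)\cap V(C_5)$. The engine of the whole argument is that the four forbidden graphs squeeze $A_u$ into a very short list. First, $P_5$-freeness rules out $|A_u|=1$ (if $A_u=\{v_1\}$ then $uv_1v_2v_3v_4$ is an induced $P_5$) and rules out an adjacent pair (if $A_u=\{v_1,v_2\}$ then $uv_2v_3v_4v_5$ is an induced $P_5$). Next, since $\alpha(C_5)=2$, every $3$- or $4$-subset of $V(C_5)$ contains two consecutive cycle-vertices, and a short check shows each such trace produces a forbidden subgraph: a consecutive triple $\{v_1,v_2,v_3\}$ gives the Kite on $\{u,v_1,v_2,v_3,v_5\}$ (diamond $\{u,v_1,v_2,v_3\}$ with pendant $v_5$ at $v_1$), the ``gap'' triple $\{v_1,v_2,v_4\}$ gives the Bull on $\{u,v_1,v_2,v_3,v_5\}$, and $\{v_1,v_2,v_3,v_4\}$ again gives a Kite. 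Hence for every $u\notin V(C_5)$ we have $A_u\in\{\emptyset\}\cup\{\{v_{i-1},v_{i+1}\}:1\le i\le5\}\cup\{V(C_5)\}$; I will call these the \emph{far}, \emph{blob} and \emph{apex} vertices respectively.

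Next I would assemble the blobs. Put $W_i=\{v_i\}\cup\{u:A_u=\{v_{i-1},v_{i+1}\}\}$ and $W=\bigcup_i W_i$. I would show $\langle W\rangle\cong\mathbb{I}[C_5]$ by three local checks: each $W_i$ is independent (two adjacent vertices of $W_1$ give, together with $v_2,v_5,v_3$, a Kite), consecutive blobs are completely joined (a non-edge between $a\in W_1$ and $c\in W_2$ yields the induced $P_5$ $\,v_5av_2v_3c$), and non-consecutive blobs are non-adjacent (an edge between $a\in W_1$ and $b\in W_3$ yields, together with $v_1,v_2,v_4$, a Bull). For the apexes $X$: two adjacent apexes form a $K_4$ with any cycle-edge, so $X$ is independent; and every apex is complete to $W$, since if an apex $x$ missed some $a\in W_1$ then $\{x,v_1,v_2,v_4,a\}$ would be a Bull. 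Finally, a far vertex has no neighbour in $W$, because a neighbour $a\in W_1$ of a far vertex $z$ would give the induced $P_5$ $\,zav_2v_3v_4$. Thus $[Z,W]=\emptyset$, so in $G-X$ the set $W\cong\mathbb{I}[C_5]$ is separated from every far vertex.

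With these pieces in hand I would set $V_1:=W$ (it contains the $C_5$) and $B:=X$; property~(2) for this component and the completeness of $[V_1,B]$ are then exactly the facts just proved. It remains to organise the far set $Z$, which by connectivity attaches to the rest of $G$ only through $X$. Here I would define $S$ as the second colour class of a complete bipartite ``core'' sitting on top of $B$, show that the apex--far adjacencies are forced (again by testing paths and triangles that leave $W$ against $P_5$, Bull and Kite), and isolate a distinguished vertex $x^*\in S$ whose private neighbourhood is exactly $V_3$. The induced subgraph on $V_3$ would be handled by the same trichotomy relative to the structure around $x^*$, yielding the disjoint union of $\mathbb{I}[C_5]$'s and bipartite graphs, while the remaining non-adjacencies $[V_1,S]=[V_1,V_3]=[V_3,B]=\emptyset$ follow from $[Z,W]=\emptyset$ and the definition of $x^*$.

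The main obstacle is precisely this last step: the portion of the argument living inside the far set $Z$. Everything touching the fixed $C_5$ is pinned down cleanly by the trichotomy, but showing that the apex--far adjacencies assemble into a single complete bipartite core $\langle B\cup S\rangle$, and in particular that \emph{all} of $V_3$ hangs off one vertex $x^*$ rather than off several vertices of $S$, is where the real case analysis concentrates. I expect it to require careful control of induced paths and triangles that use one blob vertex, one or two apexes, and one or two far vertices simultaneously, and this is the step I anticipate being the most delicate.
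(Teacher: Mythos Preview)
Your opening half is exactly the paper's: the trace analysis (their Claim~1), the blob structure $\langle W\rangle\cong\mathbb{I}[C_5]$ (Claim~2), $X$ independent and complete to $W$ (Claims~3--4), and $[Z,W]=\emptyset$ (Claim~5) are all argued with the same witnesses.

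The difficulty you flag in the far set $Z$ is real, and your sketch there contains a concrete error and is missing the right tool. The error is the assignment $V_1:=W$. Stratify $Z$ by distance from the $C_5$ into $N_2,N_3,\dots$; the key structural fact (Claim~6) is that $[B,N_2]$ is \emph{complete}: if an apex $x$ missed some $y\in N_2$, pick $z\in B$ with $zy\in E(G)$ and then $\{v_1,v_2,x,z,y\}$ is a Kite. Consequently every vertex of $N_2\setminus S$ is complete to $B$, so it cannot sit in $V_3$ (that would violate $[V_3,B]=\emptyset$) and it cannot sit in $V_2$ (since $\langle N_2\rangle$ need not be independent). The paper's partition is $V_1=W\cup(N_2\setminus S)$, $V_2=B\cup S$, $V_3=N_3$, with $N_i=\emptyset$ for $i\ge 4$ (else $P_5$).

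The missing tool is that you do not reapply the $C_5$-trace trichotomy inside $Z$. Instead, since every apex is universal to $N_2$, a triangle in $\langle N_2\rangle$ would give a $K_4$; so $\langle N_2\rangle$ is $\{P_5,K_3\}$-free and Sumner's theorem (Theorem~\ref{p5k3}) immediately yields that each of its components is bipartite or an $\mathbb{I}[C_5]$. The set $S$ is then defined as those $y\in N_2$ with a neighbour in $N_3$; a Bull/Kite check on $\{v_1,b,x,y,z\}$ (with $b\in B$, $xy$ an $N_2$-edge, $z\in N_3$) shows such $y$ are isolated in $\langle N_2\rangle$, so $S$ is independent and $[S,N_2\setminus S]=\emptyset$. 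The universal vertex $x^*\in S$ over $N_3$ is produced by a $P_5$-avoidance argument (Claim~9): if no $x^*$ exists, one extracts $x,x'\in S$ and $y,y'\in N_3$ with $xy,x'y'\in E$, $xy',x'y\notin E$, and then $\{y,x,b,x',y'\}$ or $\{v_1,b,x',y',y\}$ is an induced $P_5$. Finally $\langle N_3\rangle$ is triangle-free for the same $K_4$ reason (via $x^*$), and Sumner applies again. The ``core'' $\langle B\cup S\rangle$ is complete bipartite simply because $S\subseteq N_2$ and $[B,N_2]$ is complete --- no separate apex--far case analysis is needed.
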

\begin{proof}
Let $G$ be a connected $\{P_5,K_4,Kite,Bull\}$-free graph that contains an induced $C_5\cong \langle\{v_0,v_1,v_2,v_3,v_4\}\rangle$ $=\langle N_0\rangle$, and let $N_i=\{x\in V(G)
:\mathrm{ dist}(x,N_0)=i\},$ $i\geq1$.

\noindent \textbf{Claim 1:} If $x\in N_1$, then $\langle N(x)\cap N_0\rangle\cong 2K_1$ or $C_5$.

For $x\in N_1$, the possibilities for $\langle N(x)\cap N_0\rangle$ are $K_1, K_2, P_3, P_4, 2K_1, K_1\cup K_2$ and $C_5$. Here (a) if $\langle N(x)\cap N_0\rangle\cong K_1$ or $K_2$, then $P_5 \sqsubseteq G$, (b) if $\langle N(x)\cap N_0\rangle\cong P_3$ or $P_4$, then Kite $\sqsubseteq G$, and (c) if $\langle N(x)\cap N_0\rangle\cong K_1\cup K_2$, then Bull $\sqsubseteq G$, a contradiction. Finally, if $\langle N(x)\cap N_0\rangle\cong 2K_1$ or $C_5$, we cannot get $P_5$ or $K_4$ (or) Kite (or) Bull as an induced subgraph in $\langle N_0\cup N_1\rangle$. Hence $\langle N(x)\cap N_0\rangle\cong 2K_1$ or $C_5$.

Throughout this proof, for any integer $i$, $v_i$ means $v_{i\ (\textnormal{mod}\ 5)}$ and $A_i$ means $A_{i\ (\textnormal{mod}\ 5)}$. For $0\leq i\leq 4$, let $A_i=\{x\in N_1:  N(x)\cap N_0=\{v_{i-1},v_{i+1}\}\}\cup \{v_i\}$ and let $B=\{x\in N_1:\langle N(x)\cap N_0\rangle\cong C_5\}$.

\noindent \textbf{Claim 2:} $\langle\cup_{i=0}^4A_i\rangle\cong \mathbb{I}[C_5]$.

For every $i$, $0\leq i\leq 4$, we have (a) $\langle A_i\rangle$ is independent (else, if $x,y\in A_i$ are adjacent, then $x,y\neq v_i$ and $\langle\{ x,v_{i+1},y,v_{i-1},v_{i-2}\}\rangle\cong\ \mathrm{Kite}\sqsubseteq G$), (b) $[ A_i,A_{i+1}]$ is complete (else, if $x\in A_i$ and $y\in A_{i+1}$ are not adjacent, then $\langle\{ x, v_{i-1},v_{i-2},v_{i-3},y\}\rangle\cong P_5\sqsubseteq G$), (c) $[ A_i, A_{i+2}]=\emptyset$ (else, if $x\in A_i$ and $y\in A_{i+2}$ are adjacent, then $\langle \{v_{i-1},x, v_{i+1},v_{i+2},y\}\rangle\cong\ \mathrm{Bull} \sqsubseteq G$). Thus from (a), (b) and (c), we conclude that $\langle\cup_{i=0}^4A_i\rangle\cong \mathbb{I}[C_5]$.

\noindent \textbf{Claim 3:} $[\cup_{i=0}^4A_i,B]$ is complete.

On the contrary, if there exist vertices $x\in A_i$, $x\neq v_i$ and $y\in B$ such that $xy\notin E(G)$, then $\langle\{v_i,v_{i+1},x,y,v_{i-2}\}\rangle\cong\ \mathrm{Bull} \sqsubseteq G$, a contradiction.

\noindent \textbf{Claim 4:} $\langle B\rangle$ is independent.

Suppose if there exist vertices $x$ and $y$ in $B$ such that $xy\in E(G)$, then $\langle \{v_1,v_2,x,y\} \rangle\cong K_4\sqsubseteq G$, a contradiction.

\noindent \textbf{Claim 5:} If $x\in \cup_{i=0}^4A_i$, then $N(x)\cap N_2=\emptyset$.

Let $x\in A_i$ for some $i$ such that $0\leq i\leq4$. Suppose if there exists a vertex $y\in N(x)\cap N_2$, then $\langle \{y,x,v_{i+1},v_{i+2},v_{i+3}\}\rangle\cong P_5\sqsubseteq G$, a contradiction.\\
Note that, if $B=\emptyset$, then by using claims 2 and 5, we can observe that $G\cong \mathbb{I}[C_5]$. Now, let us assume that $B\neq \emptyset $ and $N_2\neq \emptyset$.

\noindent \textbf{Claim 6:} $[B,N_2]$ is complete.

Here, if there exist vertices $x\in B$ and $y\in N_2$ such that $xy\notin E(G)$,  then by using Claim 5, there exists a vertex $z\in B$ such that $yz\in E(G)$. Now from Claim 4, $xz\notin E(G)$. Hence $\langle\{v_1,v_2,x,y,z\}\rangle\cong\ \mathrm{Kite}\sqsubseteq G$, a contradiction.

\noindent \textbf{Claim 7:} $\langle N_2\rangle$ is triangle-free.

On the contrary, assume that there exist vertices $\{u_1,u_2,u_3\}\subseteq N_2$ which induce a $K_3$ in $G$. Then by using Claim 6, for every vertex $x\in B$, $\langle\{x, u_1,u_2,u_3\}\rangle\cong K_4\sqsubseteq G$, a contradiction.\\
Since $G$ is assumed to be $P_5$-free and $\langle N_2\rangle$ is triangle-free, by Theorem \ref{p5k3} we see that each component of $\langle N_2\rangle$ is either isomorphic to a $\mathbb{I}[C_5]$ or to a bipartite graph.

Suppose $N_3=\emptyset$, by using the above Claims, we see that $G\cong \langle \cup_{i=0}^4 A_i\cup N_2\rangle + \langle B\rangle$. Now, let us assume that $N_3\neq \emptyset$.

\noindent \textbf{Claim 8:} If $xy$ is an edge in $\langle N_2\rangle$, then $N(x)\cap N_3=\emptyset$ and $N(y)\cap N_3=\emptyset$.

Let $xy$ be an edge in $\langle N_2\rangle$.  Suppose if there exists a vertex $z\in N_3$ such that $xz\in E(G)$ or $yz\in E(G)$ (or) $\{xz,yz\}\in E(G)$,  then $\langle\{v_1,b, x,y,z\}\rangle\cong \ \mathrm{Bull} \mathrm{~or~} \mathrm{Kite}\sqsubseteq G$ (where $b\in B$), a contradiction.

Let  $S$ be the collection of the vertices in $N_2$ which have neighbors in $N_3$. From Claim 8, it can be seen that $S$ is an independent subset of $N_2$ such that $[S,N_2\backslash S]=\emptyset$.

\noindent \textbf{Claim 9:} There exists a vertex $x^*\in S$ such that $[x^*,N_3]$ is complete. Also, $\langle N_3\rangle$ is triangle-free.

On the contrary, let us assume that there exists no $x^*\in S$ such that  $[x^*,N_3]$ is complete. Under this assumption, first let us show that there exist vertices $x,x'\in S$ and $y,y'\in N_3$ such that $xy, x'y'\in E(G)$ and $xy',x'y\notin E(G)$. Let $x_1,x_2,\ldots, x_{|S|}$ and $y_1,y_2,\ldots, y_{|N_3|}$ be the vertices of $S$ and $N_3$ respectively. Consider $x_1$. By our assumption, $x_1$ is non-adjacent to at least one of the vertex in $N_3$, say $y_1$ and the vertex $y_1$ should have a neighbor in $S$, say $x_2$. Now the vertex $x_2$ is also non-adjacent to at least one vertex in $N_3$, say $y_2$. Suppose $x_1y_2\in E(G)$, then  $x=x_1$, $x'=x_2$, $y=y_2$ and $y'=y_1$ will possess the required property. If not, $x_1y_2\notin E(G)$ and $y_2$ should have a neighbor in $S$, say $x_3$. Suppose $x_3y_1\notin E(G)$,  $x=x_2$, $x'=x_3$, $y=y_1$ and $y'=y_2$ will have the required property. Otherwise, $x_3y_1\in E(G)$ and there is a vertex in $N_3$ which is non-adjacent to $x_3$, say $y_3$. Like wise, if $x_1y_3\in E(G)$ or $x_2y_3\in E(G)$, then as mentioned above we can get vertices with the required condition. If not, $x_1y_3\notin E(G)$ and $x_2y_3\notin E(G)$, and hence the vertex $y_3$ should have a neighbor in $S$, say $x_4$. Similarly, even when the vertex $x_4$ is non-adjacent to $y_1$ or $y_2$, we can get the vertices with the required condition. Suppose $x_4$ is adjacent to $y_1$ and $y_2$, the process continues. Since the number of vertices is finite, this process stops at a certain stage having vertices $x_i,x_j\in S$ and $y_{i-1}, y_i\in N_3$  such that $x_iy_{i-1},y_ix_j\in E(G)$ and $x_jy_{i-1},x_iy_i\notin E(G)$ for some $i,j\in\{1,2,\ldots,|S|\}$. Thus there exist vertices $x,x'\in S$ and $y,y'\in N_3$ such that $xy, x'y'\in E(G)$ and $xy',x'y\notin E(G)$. Now by using Claim 8, $xx'\notin E(G)$, and hence for some $b\in B$, $\langle\{v_1,b,x',y',y\}\rangle\cong P_5$ when $yy'\in E(G)$ or $\langle\{y,x,b,x',y'\}\rangle\cong P_5$ when $yy'\notin E(G)$, a contradiction.

Also note that, $\langle N_3\rangle$ has to be  triangle-free.  Otherwise,  $K_4\sqsubseteq G$.

\noindent \textbf{Claim 10:} $N_i=\emptyset$, for all $i\geq4$.

This can be easily observed from the fact that if $N_4\neq\emptyset$, then we will get $P_5\sqsubseteq G$, a contradiction.

Since $G$ is $P_5$-free and $\langle N_3\rangle$ is triangle-free, each component of $\langle N_3\rangle$ is isomorphic to a $\mathbb{I}[C_5]$ or to a bipartite graph. Let $V_1=\{\cup_{i=0}^4 A_i\cup (N_2\setminus S)\}$, $V_2=B\cup S$ and $V_3=N_3$. By using the above Claims, we see that $\langle V_2\rangle$ is a complete bipartite graph, $\langle V_1\rangle$ is a disjoint union of $\mathbb{I}[C_5]'s$ and bipartite graphs such that $[V_1,B]$  is complete,  and $\langle V_3\rangle $ is also a disjoint union of $\mathbb{I}[C_5]'s$ and bipartite graphs such that there exists a vertex $x^*\in S$ such that $[x^*,V_3]$ is complete. Also from Claims 5 and 8, it can be observed that $[V_1,S]=[V_1,V_3]=[V_3,B]=\emptyset$.
\end{proof}
By Theorem \ref{p5k4kb}, one can easily find the chromatic number of this family.
\begin{cor} \label{p5k4kbc}
If $G$ is a connected $\{P_5,K_4,Kite,{Bull}\}$-free graph that contains an induced $C_5$, then $\chi(G)=3$ if and only if $G\cong \mathbb{I}[C_5]$, otherwise $\chi(G)=4$.
\end{cor}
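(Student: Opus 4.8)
The plan is to read off the chromatic number directly from the decomposition $V(G)=V_1\cup V_2\cup V_3$ supplied by Theorem \ref{p5k4kb}. First I would record the trivial lower bound: since $G$ contains an induced $C_5$ and $\chi(C_5)=3$, we have $\chi(G)\geq 3$ in every case, so it suffices to pin down when equality holds and to prove that $\chi(G)\leq 4$ always.

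For the upper bound $\chi(G)\leq 4$ I would exhibit an explicit $4$-colouring with colours $\{1,2,3,4\}$. By part (2) of Theorem \ref{p5k4kb}, $\langle V_1\rangle$ and $\langle V_3\rangle$ are each a disjoint union of copies of $\mathbb{I}[C_5]$ and bipartite graphs, hence each is $3$-colourable; I would colour $\langle V_1\rangle$ with $\{1,2,3\}$ and $\langle V_3\rangle$ with $\{2,3,4\}$. Since $\langle V_2\rangle$ is complete bipartite with parts $B$ and $S$, both $B$ and $S$ are independent, so I give every vertex of $B$ the colour $4$ and every vertex of $S$ the colour $1$. Using part (3), the pairs $[V_1,S]$, $[V_1,V_3]$ and $[V_3,B]$ are empty while $[V_1,B]$ is complete, so the only cross-edges to check are $V_1$--$B$ ($\{1,2,3\}$ against $4$), $B$--$S$ ($4$ against $1$), and $S$--$V_3$ ($1$ against $\{2,3,4\}$); all of these are properly coloured, so $\chi(G)\leq 4$. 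Note this argument does not even invoke the special vertex $x^*$ of part (4).

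Next I would settle the dichotomy. If $G\cong\mathbb{I}[C_5]$, then colouring each independent part $\overline{K_{m_i}}$ with the colour of $v_i$ in a fixed proper $3$-colouring of $C_5$ is proper, giving $\chi(G)=3$. Conversely, suppose $G\not\cong\mathbb{I}[C_5]$. The proof of Theorem \ref{p5k4kb} records that $B=\emptyset$ forces $G\cong\mathbb{I}[C_5]$, so in this case $B\neq\emptyset$; any $b\in B$ satisfies $\langle N(b)\cap N_0\rangle\cong C_5$, i.e.\ $b$ is adjacent to all five vertices of the induced $C_5=\langle N_0\rangle$. Hence $\langle N_0\cup\{b\}\rangle$ is the $5$-wheel $W_5$ (a $C_5$ together with a vertex adjacent to all of it), and $\chi(W_5)=4$, so $\chi(G)\geq 4$. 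Combined with the upper bound this gives $\chi(G)=4$, and assembling the two cases yields $\chi(G)=3$ precisely when $G\cong\mathbb{I}[C_5]$ and $\chi(G)=4$ otherwise.

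The $4$-colouring is routine once the empty and complete adjacencies of Theorem \ref{p5k4kb} are in hand; the one step needing care is the lower bound in the non-isomorphic case, where I rely on the equivalence (extracted from the proof of Theorem \ref{p5k4kb}) that $B=\emptyset$ is the same as $G\cong\mathbb{I}[C_5]$. This is what converts the purely structural statement into the presence of a hub vertex completing the induced $C_5$ to a wheel, which is exactly the obstruction forcing a fourth colour.
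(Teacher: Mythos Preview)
Your proof is correct and follows essentially the same approach as the paper: the same explicit $4$-colouring (colours $\{1,2,3\}$ on $V_1$, $\{2,3,4\}$ on $V_3$, $4$ on $B$, $1$ on $S$), and the same reliance on the fact from the proof of Theorem \ref{p5k4kb} that $B=\emptyset$ iff $G\cong\mathbb{I}[C_5]$. The only cosmetic difference is that for the lower bound when $B\neq\emptyset$ the paper writes $\langle B\rangle+\mathbb{I}[C_5]\sqsubseteq G$ while you single out the $5$-wheel $W_5=K_1+C_5$, which is the same obstruction.
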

\begin{proof}
By Theorem \ref{p5k4kb}, we see that $V(G)= V_1\cup V_2\cup V_3$, where $V_1, V_2$ and $V_3$ have the properties stated in the statement of Theorem \ref{p5k4kb}. Since $\langle V_1\cup V_3\rangle$ is a disjoint union of $\mathbb{I}[C_5]'s$ and bipartite graphs, one can color the vertices of $V_1$ and $V_3$ with colors $\{1,2,3\}$ and $\{2,3,4\}$ respectively which yields a proper coloring for the subgraph $\langle V_1\cup V_3\rangle$. Since $\langle V_2\rangle$ is a complete bipartite graph with bipartition $B$ and $S$, $[B,V_1]$ is complete, $[x^*,V_3]$ is complete and   $[V_1,S]=[V_1,V_3]=[V_3,B]=\emptyset$, coloring the vertices of $B$ and $S$ with 4 and 1 respectively will yields a proper coloring for $G$. Thus $\chi(G)\leq 4$. Suppose $B=\emptyset$, then $G\cong \mathbb{I}[C_5]$ and hence $\chi(G)=3$.
If not, $B\neq \emptyset$ and $\langle B\rangle+\mathbb{I}[C_5]\sqsubseteq G$. Thus $\chi(G)=4$.
\end{proof}

Now we shall consider the indicated coloring for the independent expansion of $C_n$.

\begin{thm}
\label{indexp} For  $1\leq i\leq n$, let $m_i$'s be  positive integers. Then the graph $G=$ \linebreak $\mathbb{I}[C_n](m_1,m_2,\ldots,m_n)$ is $k$-indicated colorable for all $k\geq\chi(G)$.
\end{thm}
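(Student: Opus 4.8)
The plan is to split on the parity of $n$. Since $\chi(G)=\chi(C_n)$ --- a proper coloring of $C_n$ extends to $G$ by giving every vertex of $V_i$ the color of $v_i$, while $C_n\sqsubseteq G$ (pick one vertex per block) forces the reverse inequality --- we have $\chi(G)=2$ when $n$ is even and $\chi(G)=3$ when $n$ is odd. If $n$ is even then $G$ is bipartite, the two sides being the unions of the odd-indexed and even-indexed blocks, so the result is immediate from Theorem~\ref{bi}. Thus the whole content lies in the odd case, where $k\ge 3$.

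For odd $n$ I would first isolate the single obstruction Ann must avoid. A vertex $x\in V_i$ becomes a block vertex exactly when all $k$ colors already appear on $N(x)=V_{i-1}\cup V_{i+1}$; since the blocks are independent sets joined completely, the real danger is that Ben pours all $k$ colors into an adjacent block (which is possible once $m_{i-1}\ge k$ or $m_{i+1}\ge k$) before $V_i$ is finished. The key idea is to neutralize this once and for all by reserving a private color for every block. Concretely, in a first phase Ann fixes one representative $u_i\in V_i$ and presents $u_1,u_2,\dots,u_{n-1},u_n$ in this order; the subgraph induced by $\{u_1,\dots,u_n\}$ is a $C_n$. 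Each $u_j$ with $j\le n-1$ has at most one colored neighbor when presented, and $u_n$ has exactly two (namely $u_{n-1}$ and $u_1$), so with $k\ge 3$ Ben can always color the presented vertex; moreover during this phase no uncolored vertex ever sees more than two colors. Hence this phase ends with a proper coloring $c_1,\dots,c_n$ of the transversal, where $c_i$ denotes the color received by $u_i$.

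The payoff is the invariant that drives the second phase: for every $i$, each vertex of $V_{i-1}\cup V_{i+1}$ is adjacent to $u_i$ and therefore is forbidden the color $c_i$ for the remainder of the game. Consequently $c_i$ never appears on $N(x)$ for any $x\in V_i$, so $x$ can never become a block vertex. Ann may therefore present all the remaining vertices in an arbitrary order: whenever Ben is asked to color some $x\in V_i$, the color $c_i$ is still available to him, the game runs to completion, and Ann wins. This covers every $k\ge 3=\chi(G)$.

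The step I expect to be the crux is recognizing and proving this invariant --- that is, identifying that the only threat is the saturation of a neighboring block, and that coloring a transversal first makes each block's ``own'' color permanently unusable by its neighbors. Once that is in place the two phases are routine: the first phase only needs $k\ge 3$ in order to close the odd cycle at $u_n$, and the second phase requires no ordering argument at all. The one point needing a little care is the interface between the phases, namely checking that no vertex other than the one presented is blocked while the transversal is being built; this is handled by the observation that throughout the first phase every uncolored vertex has at most two colored neighbors.
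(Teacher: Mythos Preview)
Your proof is correct and follows essentially the same strategy as the paper: have Ann present a transversal copy of $C_n$ first (one vertex per block, cyclically), then the remaining vertices in any order, so that the color $c_i$ given to the representative $u_i$ stays permanently available for all of $V_i$. The paper treats even and odd $n$ uniformly with this single argument rather than invoking Theorem~\ref{bi} for the bipartite case, and it states the invariant more tersely, but the idea is identical.
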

\begin{proof}
For the graph $G = \mathbb{I}[C_n](m_1,m_2,\ldots,m_n)$, for $m_i\geq1$, $1\leq i\leq n$, it is easy to observe that $\chi(G)=2$ when $n$ is even and $\chi(G)=3$ when $n$ is odd.  Here,  if Ann first present the vertices of an induced $C_n$ cyclically and then the remaining vertices in any order, Ben will not be able to produce a blocked vertex. Thus Ann has a winning strategy for $G$ with $k$ colors, for every $k\geq \chi(G)$.
\end{proof}

We know that, for the union of two graphs $G_1$ and $G_2$, $\chi(G_1\cup G_2)=\max\{\chi(G_1),\chi(G_2)\}$. The same holds even for the indicated chromatic number.
\begin{thm}
$\mathrm{(\cite{pan})}$. \label{union}Let $G = G_1 \cup  G_2$. If $G_1$ is $k_1$-indicated colorable for every $k_1 \geq\chi_i(G_1)$ and $G_2$ is
$k_2$-indicated colorable for every $k_2\geq\chi_i(G_2)$, then $\chi_i(G) = \max\{\chi_i(G_1),\chi_i(G_2)\}$ and $G$ is $k$-indicated
colorable for all $k\geq\chi_i(G)$.
\end{thm}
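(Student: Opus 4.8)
The plan is to prove both assertions at once by exploiting the one structural feature of a disjoint union: in $G=G_1\cup G_2$ no edge joins $V(G_1)$ to $V(G_2)$, so a legal color for any vertex depends only on the colors already used inside its own part. Write $m=\max\{\chi_i(G_1),\chi_i(G_2)\}$. I would show $\chi_i(G)\ge m$ and, more strongly, that Ann wins on $G$ with every $k\ge m$; taking $k=m$ then yields $\chi_i(G)\le m$, hence equality, and the case $k\ge m=\chi_i(G)$ is exactly the $k$-indicated colorability claimed.

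For the lower bound, suppose $k<\chi_i(G_1)$, so Ben has a strategy that forces a blocked vertex in the stand-alone game on $G_1$ with $k$ colors. I would let Ben play this strategy inside $G$: whenever Ann presents a vertex of $G_1$ he answers as his $G_1$-strategy dictates, and whenever Ann presents a vertex of $G_2$ he colors it by any available color (if none is available he has already won). Because the two parts share no edge, the subsequence of moves on $G_1$ is a faithful play of the stand-alone $G_1$-game against the same ordering, so Ben's strategy is guaranteed to block a vertex of $G_1$ regardless of how Ann interleaves $G_2$-moves. Thus Ben wins on $G$ whenever $k<\chi_i(G_1)$, giving $\chi_i(G)\ge\chi_i(G_1)$; by symmetry $\chi_i(G)\ge\chi_i(G_2)$, and so $\chi_i(G)\ge m$.

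For the upper bound I would give Ann a sequential strategy for any fixed $k\ge m$. Since $k\ge\chi_i(G_1)$, the hypothesis furnishes a winning strategy $\sigma_1$ for Ann on $G_1$ with $k$ colors, and since $k\ge\chi_i(G_2)$ it furnishes a winning strategy $\sigma_2$ on $G_2$ with $k$ colors. Ann first presents only vertices of $G_1$, following $\sigma_1$ until all of $G_1$ is colored; by disjointness Ben's responses coincide with those in a stand-alone $G_1$-game, so $\sigma_1$ succeeds and no vertex of $G_1$ is ever blocked. She then plays $\sigma_2$ on $G_2$; the already-colored vertices of $G_1$ are non-adjacent to every vertex of $G_2$, so the $G_2$-phase is again a faithful stand-alone game and $\sigma_2$ succeeds. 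Every vertex is colored, so Ann wins.

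I expect the argument to be short, the only delicate points being conceptual rather than computational. First, in the lower bound one must check that Ann cannot use the ``free'' $G_2$-moves to rescue a threatened vertex of $G_1$ — this is precisely what disjointness rules out. Second, and more importantly, the upper bound uses the hypothesis at full strength: Ann needs winning strategies on $G_1$ and on $G_2$ with the \emph{same} number $k$ of colors, and $k$ may exceed $\chi_i(G_1)$ or $\chi_i(G_2)$. This is why the theorem assumes each $G_j$ is $k_j$-indicated colorable for \emph{every} $k_j\ge\chi_i(G_j)$, not merely for $k_j=\chi_i(G_j)$ — the extra colors are not automatically harmless, which is the very phenomenon the paper is concerned with.
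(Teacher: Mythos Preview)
Your argument is correct and is the natural one. Note, however, that this theorem is not proved in the present paper: it is quoted from \cite{pan} and stated without proof here, so there is no in-paper argument to compare against. Your two-step strategy---transport Ben's winning strategy on $G_j$ into $G$ for the lower bound, and have Ann play a winning $k$-strategy on $G_1$ followed by one on $G_2$ for the upper bound---is exactly the standard proof, and your remark that the hypothesis ``$k_j$-indicated colorable for every $k_j\ge\chi_i(G_j)$'' is genuinely needed (since Ann must win on both parts with the \emph{same} $k$) is on point.
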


\noindent Next, we see that Corollary \ref{p5k3c} which was proved in \cite{pan} is a simple consequence of Theorem \ref{bi},  \ref{p5k3},  \ref{indexp} and \ref{union}.
\begin{cor}$\mathrm{(\cite{pan})}$
\label{p5k3c} Every $\{P_5,K_3\}$-free graph $G$ is $k$-indicated colorable for all $k\geq\chi(G)$.
\end{cor}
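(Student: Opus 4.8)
The plan is to combine Sumner's structural description of $\{P_5,K_3\}$-free graphs (Theorem \ref{p5k3}) with the component-wise results already available, namely Theorem \ref{bi} for bipartite graphs, Theorem \ref{indexp} for $\mathbb{I}[C_5]$, and the union theorem (Theorem \ref{union}) to glue the components together. First I would invoke Theorem \ref{p5k3} to write $G$ as the disjoint union $G=G_1\cup G_2\cup\cdots\cup G_t$ of its connected components, where each $G_j$ is either bipartite or isomorphic to some $\mathbb{I}[C_5](m_1,\ldots,m_5)$ with $m_i\geq 1$.

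Next I would establish that each component satisfies $\chi_i(G_j)=\chi(G_j)$ together with the stronger monotonicity property that $G_j$ is $k$-indicated colorable for every $k\geq\chi(G_j)$. If $G_j$ is bipartite, then Theorem \ref{bi} gives that $G_j$ is $k$-indicated colorable for all $k\geq 2$; since $\chi(G_j)\in\{1,2\}$ and $\chi(G_j)\leq\chi_i(G_j)$, an edgeless $G_j$ is trivially $k$-indicated colorable for all $k\geq 1=\chi(G_j)$, while a bipartite $G_j$ with an edge has $\chi_i(G_j)=2=\chi(G_j)$. If $G_j\cong\mathbb{I}[C_5](m_1,\ldots,m_5)$, then Theorem \ref{indexp} (with $n=5$ odd) gives $\chi(G_j)=3$ and that $G_j$ is $k$-indicated colorable for all $k\geq 3=\chi(G_j)=\chi_i(G_j)$. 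In either case $\chi_i(G_j)=\chi(G_j)$ and $G_j$ is $k$-indicated colorable for all $k\geq\chi_i(G_j)$.

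Finally I would apply Theorem \ref{union} inductively on the number $t$ of components: writing $G=(G_1\cup\cdots\cup G_{t-1})\cup G_t$ and using the induction hypothesis on $G_1\cup\cdots\cup G_{t-1}$ together with the component property established above, Theorem \ref{union} yields $\chi_i(G)=\max_{1\leq j\leq t}\chi_i(G_j)$ and that $G$ is $k$-indicated colorable for all $k\geq\chi_i(G)$. Since $\chi_i(G_j)=\chi(G_j)$ for every $j$ and $\chi(G)=\max_{1\leq j\leq t}\chi(G_j)$ for a disjoint union, this gives $\chi_i(G)=\chi(G)$, and hence $G$ is $k$-indicated colorable for all $k\geq\chi(G)$, as desired. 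The argument is essentially bookkeeping, and I do not expect a genuine obstacle; the one point demanding care is that the threshold should be $\chi(G)$ rather than merely $\chi_i(G)$, which is precisely why I track the equality $\chi_i(G_j)=\chi(G_j)$ at the component level (in particular the edgeless and $\mathbb{I}[C_5]$ cases) rather than only the weaker $k$-indicated colorability statement supplied directly by Theorems \ref{bi} and \ref{indexp}.
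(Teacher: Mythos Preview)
Your proposal is correct and follows exactly the approach the paper indicates: the paper does not spell out a proof but simply notes that the corollary is ``a simple consequence of Theorem \ref{bi}, \ref{p5k3}, \ref{indexp} and \ref{union},'' which is precisely the combination you assemble. Your additional care in tracking $\chi_i(G_j)=\chi(G_j)$ at the component level (including the edgeless case) is a welcome refinement that makes explicit why the threshold is $\chi(G)$ rather than only $\chi_i(G)$.
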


Now, let us consider the indicated coloring of $\{P_5,K_4, Kite, Bull\}$-free graphs which contains an induced $C_5$.
\begin{thm}
Let $G$ be a $\{P_5,K_4,Kite,Bull\}$-free graph which contains an induced $C_5$. Then $G$ is $k$-indicated colorable for all $k\geq\chi(G)$.
\end{thm}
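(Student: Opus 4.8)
The plan is to reduce to connected graphs and then run Ann through the decomposition supplied by Theorem \ref{p5k4kb}. By Theorem \ref{union} it is enough to prove the statement for each connected component of $G$, and to check that each component satisfies $\chi_i=\chi$ so that the resulting bound reads $k\ge\chi(G)$. A component containing an induced $C_5$ is exactly the graph analysed in Theorem \ref{p5k4kb}; a component with no induced $C_5$ is in addition $C_5$-free, i.e.\ $\{P_5,K_4,Kite,Bull,C_5\}$-free, and I would dispatch it by the structurally simpler analysis underlying Corollary \ref{p5k3c} together with the results cited from \cite{fran,pan} (this is the one step I would have to state or verify separately, but it concerns the ``easy'' subfamily, not the new content). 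So the heart of the matter is a connected $\{P_5,K_4,Kite,Bull\}$-free graph $G$ with an induced $C_5$. By Corollary \ref{p5k4kbc} there are two cases: if $G\cong\mathbb{I}[C_5]$ then Theorem \ref{indexp} already finishes the job, so I may assume $\chi(G)=4$, equivalently $B\neq\emptyset$, and must give Ann a winning strategy for every $k\ge4$.

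Fix the decomposition $V(G)=V_1\cup V_2\cup V_3$, $V_2=B\cup S$, as in Theorem \ref{p5k4kb}, and recall that $[V_1,B]$ is complete, $[x^*,V_3]$ is complete, $[V_1,S]=[V_1,V_3]=[V_3,B]=\emptyset$, while $\langle V_1\rangle$ and $\langle V_3\rangle$ are disjoint unions of copies of $\mathbb{I}[C_5]$ and bipartite graphs (so each has indicated chromatic number at most $3$). Ann's presentation order would be: first one vertex $b_1\in B$, receiving a colour $\alpha$; then all of $V_1$; then $x^*$, receiving a colour $\beta\neq\alpha$; then all of $V_3$; then $B\setminus\{b_1\}$; and finally $S\setminus\{x^*\}$. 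While presenting $V_1$ she runs the disjoint-union indicated strategy of Theorems \ref{indexp}, \ref{bi} and \ref{union} on the palette $\{1,\dots,k\}\setminus\{\alpha\}$ of size $k-1\ge3$; this is legitimate because every vertex of $V_1$ is adjacent to $b_1$ and hence already barred from $\alpha$, and $\chi_i(\langle V_1\rangle)=3\le k-1$. Symmetrically she presents $V_3$ with the same strategy on $\{1,\dots,k\}\setminus\{\beta\}$, which is legitimate since $x^*$ is complete to $V_3$.

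The point of planting $b_1$ and $x^*$ first is to reserve a colour on each side. Every vertex of $V_1$ avoids $\alpha$, so the colours appearing on $V_1$ miss $\alpha$; since $V_1\neq\emptyset$ this keeps $\alpha$ free for the vertices of $B\setminus\{b_1\}$, each of which, being adjacent only to $V_1$ and to the single coloured $S$-vertex $x^*$ at the time it is presented, sees a colour set contained in the colours of $V_1$ together with $\beta$, and hence misses $\alpha$. Symmetrically every vertex of $V_3$ avoids $\beta$, and every vertex of $B$ avoids $\beta$ because it is adjacent to $x^*=\beta$; therefore when a vertex $s\in S\setminus\{x^*\}$ is presented its coloured neighbours all lie in $B\cup V_3$ and none of them carries $\beta$, so $\beta$ is free for $s$. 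The four blow-up phases produce no blocked vertex by the cited indicated-colouring theorems (applied with one colour deleted), and the two hub vertices $b_1,x^*$ are trivially colourable when presented, so Ann wins.

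The main obstacle I expect is exactly the interaction of the two joins $[V_1,B]$ and $[x^*,V_3]$: coloured naively, Ben could pour all $k$ colours onto $V_1$ (resp.\ $V_3$) and instantly block every vertex of $B$ (resp.\ block $x^*$), so the order must colour a hub \emph{before} its adjacent blow-up part, and one must verify that the two reserved colours $\alpha,\beta$ are never simultaneously exhausted on the neighbourhoods of $B$, $x^*$ and $S$. A subsidiary point requiring an explicit remark is that the $\mathbb{I}[C_5]$ and bipartite indicated strategies are being invoked on a palette with one colour removed; this is harmless since those strategies depend only on the number of available colours and not on their names, but it should be recorded. Assembling these pieces through Theorem \ref{union} then yields $k$-indicated colorability of $G$ for all $k\ge\chi(G)$.
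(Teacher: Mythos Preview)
Your proof is correct and follows essentially the same route as the paper: reserve colours by presenting a vertex of $B$ and the vertex $x^*$ early, play the $\mathbb{I}[C_5]$/bipartite indicated strategies on $V_1$ and $V_3$ with one colour deleted from the palette, and then complete $B$ and $S$ using the reserved colours. The paper presents both hubs before both blow-ups (order $x^*,b,V_1,V_3,\ldots$) whereas you interleave them ($b_1,V_1,x^*,V_3,\ldots$), but the mechanism is identical; you are in fact more careful than the paper about the component reduction, correctly flagging that $C_5$-free components need a separate justification which the paper simply omits.
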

\begin{proof}
By Theorem \ref{union}, it is enough to prove the result for a connected $\{P_5,K_4,Kite,Bull\}$-free graph that contains an induced $C_5$.
Let $G$ be such a graph. Then by Theorem \ref{p5k4kb}, $V(G)=V_1\cup V_2\cup V_3$ where (1) $\langle V_2\rangle$ is a complete bipartite graph with bipartition say $B$ and $S$, (2) $\langle V_1\cup V_3\rangle$ is a disjoint union of $\mathbb{I}[C_5]'s$ and bipartite graphs, (3) $[V_1,B]$ is complete, $[V_1,S]=\emptyset$, $[V_1,V_3]=\emptyset$, $[V_3,B]=\emptyset$ and (4) there exists a vertex, say $x^*\in S$ such that $[x^*, V_3]$ is complete.

Suppose $B=\emptyset$, then $G\cong\mathbb{I}[C_5]$. Thus by Theorem \ref{indexp}, $G$ is $k$-indicated colorable for all $k\geq\chi(G)$. If not, $B\neq \emptyset$ and hence by Corollary \ref{p5k4kbc}, $G\ncong\mathbb{I}[C_5]$ and $\chi(G)=4$. Let $\{1,2,\ldots, k\geq4\}$ be the set of colors. We shall show that $G$ is $k$-indicated colorable. Let Ann start by presenting $x^*$ and a vertex $b\in B$. Without loss of generality, let the color used by Ben for $b$ and $x^*$ be 1 and 2 respectively. Since $[b,V_1]$ is complete and $[x^*, V_3]$ is complete, the set of available colors for $V_1$ and $V_2$ are  $\{2,3,\ldots,k\}$ and $\{1,3,4,\ldots,k\}$ respectively. Since $[V_1,V_3]=\emptyset$, $\langle V_1\cup V_3\rangle$ is a disjoint union of $\mathbb{I}[C_5]'s$ and bipartite graphs, by Theorem \ref{bi} and Theorem \ref{indexp}, $\langle V_1\cup V_3\rangle$ is $l$-indicated colorable for all $l\geq3$. That is,  Ann has a winning strategy for $\langle V_1\rangle$ while using the colors $\{2,3,\ldots,k\}$ and a winning strategy for $\langle V_3\rangle$ while using the colors $\{1,3,4,\ldots,k\}$. After presenting the vertices of $V_1$ and $V_3$ by using these winning strategies, Ann will present the remaining vertices of $B$ and $S$ in any order.  Clearly, the color of the vertices $b$ and $x^*$, namely 1 and 2 are available for the uncolored vertices of $B$ and $S$ respectively. Thus Ann wins the game on $G$ with $k$ colors,  $k\geq4$.
\end{proof}

\noindent Next, let us consider a structural characterization of a family of $P_6$-free graphs. The study of $P_6$-free graphs has  also been of interest for a lot of coloring parameters. See for instance, \cite{hof,liu,ran}. Here, we would like to consider  connected $\{P_6,C_5, K_{1,3}\}$-free graphs that contains an induced $C_6$.

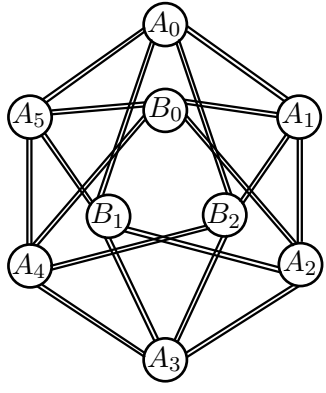
\begin{figure}
  \centering
\psset{unit=.35in}
{
\begin{pspicture}(0,-2.84)(4.68,2.84)
\pscircle[linewidth=0.04,dimen=outer](2.34,2.5){0.34}
\pscircle[linewidth=0.04,dimen=outer](4.32,1.12){0.34}
\pscircle[linewidth=0.04,dimen=outer](4.34,-1.08){0.34}
\pscircle[linewidth=0.04,dimen=outer](2.34,-2.5){0.34}
\pscircle[linewidth=0.04,dimen=outer](0.34,-1.1){0.34}
\pscircle[linewidth=0.04,dimen=outer](0.34,1.12){0.34}
\pscircle[linewidth=0.04,dimen=outer](2.34,1.22){0.34}
\pscircle[linewidth=0.04,dimen=outer](1.5,-0.36){0.34}
\pscircle[linewidth=0.04,dimen=outer](3.22,-0.34){0.34}
\psline[linewidth=0.03cm](0.52,1.38)(2.04,2.46)
\psline[linewidth=0.03cm](0.58,1.34)(2.04,2.4)
\psline[linewidth=0.03cm](0.3,0.82)(0.3,-0.78)
\psline[linewidth=0.03cm](0.36,0.8)(0.36,-0.8)
\psline[linewidth=0.03cm](0.44,-1.4)(2.04,-2.46)
\psline[linewidth=0.03cm](0.52,-1.38)(2.04,-2.4)
\psline[linewidth=0.03cm](2.64,-2.4)(4.24,-1.38)
\psline[linewidth=0.03cm](2.66,-2.46)(4.32,-1.4)
\psline[linewidth=0.03cm](4.36,0.82)(4.36,-0.74)
\psline[linewidth=0.03cm](4.3,0.8)(4.3,-0.76)
\psline[linewidth=0.03cm](2.66,2.48)(4.16,1.4)
\psline[linewidth=0.03cm](2.66,2.4)(4.1,1.36)
\psline[linewidth=0.03cm](2.04,1.34)(0.66,1.22)
\psline[linewidth=0.03cm](2.04,1.28)(0.64,1.16)
\psline[linewidth=0.03cm](2.62,1.38)(4.0,1.18)
\psline[linewidth=0.03cm](2.64,1.32)(4.0,1.12)
\psline[linewidth=0.03cm](0.5,0.86)(1.24,-0.2)
\psline[linewidth=0.03cm](0.56,0.88)(1.26,-0.14)
\psline[linewidth=0.03cm](1.44,-0.66)(2.18,-2.22)
\psline[linewidth=0.03cm](1.52,-0.68)(2.24,-2.2)
\psline[linewidth=0.03cm](4.18,0.84)(3.48,-0.18)
\psline[linewidth=0.03cm](4.14,0.88)(3.44,-0.14)
\psline[linewidth=0.03cm](3.26,-0.64)(2.52,-2.24)
\psline[linewidth=0.03cm](3.18,-0.66)(2.48,-2.2)
\psline[linewidth=0.03cm](2.02,1.14)(0.38,-0.78)
\psline[linewidth=0.03cm](2.04,1.08)(0.44,-0.8)
\psline[linewidth=0.03cm](2.64,1.16)(4.28,-0.76)
\psline[linewidth=0.03cm](2.62,1.1)(4.24,-0.8)
\psline[linewidth=0.03cm](1.42,-0.04)(2.18,2.24)
\psline[linewidth=0.03cm](1.34,-0.08)(2.14,2.28)
\psline[linewidth=0.03cm](2.5,2.24)(3.26,-0.04)
\psline[linewidth=0.03cm](2.54,2.28)(3.32,-0.04)
\psline[linewidth=0.03cm](1.76,-0.52)(4.02,-1.12)
\psline[linewidth=0.03cm](1.72,-0.58)(4.06,-1.2)
\psline[linewidth=0.03cm](2.94,-0.5)(0.64,-1.1)
\psline[linewidth=0.03cm](3.0,-0.54)(0.64,-1.16)
\usefont{T1}{ptm}{m}{n}
\rput(2.326875,2.51){\small $A_0$}
\usefont{T1}{ptm}{m}{n}
\rput(4.3185935,1.15){\small $A_1$}
\usefont{T1}{ptm}{m}{n}
\rput(4.3376563,-1.06){\small $A_2$}
\usefont{T1}{ptm}{m}{n}
\rput(2.3309374,-2.49){\small $A_3$}
\usefont{T1}{ptm}{m}{n}
\rput(0.32553125,-1.1){\small $A_4$}
\usefont{T1}{ptm}{m}{n}
\rput(0.3253125,1.15){\small $A_5$}
\usefont{T1}{ptm}{m}{n}
\rput(2.3453124,1.24){\small $B_0$}
\usefont{T1}{ptm}{m}{n}
\rput(1.48875,-0.35){\small $B_1$}
\usefont{T1}{ptm}{m}{n}
\rput(3.2051561,-0.35){\small $B_2$}
\end{pspicture}
\caption{$\{P_6,C_5,K_{1,3}\}$-free graph contains an induced $C_6$} 
\label{p6c5k13}
}
\end{figure}

\begin{thm}\label{p6c5}
If $G$ is a connected $\{P_6,C_5,K_{1,3}\}$-free graph which contains an induced $C_6$ then $G$ is isomorphic to the graph given in Figure \ref{p6c5k13}. Here $V(G)=(\cup_{i=0}^5A_i)\cup (\cup_{j=0}^2B_j)$ and the circle denote the complete subgraph induced by the sets $ A_i $ and $ B_j $ and the double line between any two sets denote the join of the two sets.
\end{thm}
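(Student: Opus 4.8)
The plan is to mimic the breadth-first-search layering used in the proof of Theorem~\ref{p5k4kb}. Fix an induced $C_6\cong\langle\{v_0,\dots,v_5\}\rangle=\langle N_0\rangle$ with $v_iv_{i+1}\in E(G)$ (indices mod $6$), and set $N_\ell=\{x:\mathrm{dist}(x,N_0)=\ell\}$. The heart of the argument is a classification of the trace $N(x)\cap N_0$ of each $x\in N_1$ on the cycle, and I would run through all induced subgraphs of $C_6$ and eliminate every possibility but two. A single neighbour, or two consecutive neighbours ($K_2$), yields an induced $P_6$ obtained by walking the long way round the hexagon; two neighbours at distance $2$ produce a claw $K_{1,3}$ centred at the cycle-vertex lying between them; two antipodal neighbours, the three-vertex trace $K_2\cup K_1$, and the four-vertex trace $P_4$ each close an induced $C_5$; and any trace of size $\ge5$, as well as $P_3\cup K_1$, contains an independent triple and hence a claw. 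What survives is exactly $N(x)\cap N_0\cong P_3$ (a path $\{v_{i-1},v_i,v_{i+1}\}$) or $N(x)\cap N_0\cong 2K_2$ (two antipodal edges, i.e.\ $N_0$ minus an antipodal pair $\{v_j,v_{j+3}\}$). This is where all three forbidden graphs genuinely enter.

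This classification lets me define the parts. Put $A_i=\{v_i\}\cup\{x\in N_1:N(x)\cap N_0=\{v_{i-1},v_i,v_{i+1}\}\}$ for $0\le i\le5$, and $B_j=\{x\in N_1:N(x)\cap N_0=N_0\setminus\{v_j,v_{j+3}\}\}$ for $0\le j\le2$; the surviving traces show these nine sets partition $N_0\cup N_1$. Next I would pin down every adjacency by exhibiting a forbidden induced subgraph whenever the desired relation fails. Each $A_i$ and each $B_j$ is a clique (two non-adjacent vertices of one part, together with a suitable common cycle-neighbour and one further cycle-vertex, form a claw). For the join pattern between the $A$'s: $[A_i,A_{i+1}]$ is complete (a non-edge $xy$ with $x\in A_i$, $y\in A_{i+1}$ gives the induced $P_6$ namely $y\,v_{i+2}\,v_{i+3}\,v_{i+4}\,v_{i-1}\,x$); $[A_i,A_{i+2}]=\emptyset$ (an edge would close an induced $C_5$ with three consecutive cycle-vertices); while $[A_i,A_{i+3}]=\emptyset$ and $[B_j,B_{j'}]=\emptyset$ for $j\ne j'$ (an edge would yield a claw, since each of these parts owns two non-adjacent cycle-neighbours to which the other part's members are non-adjacent).

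For the mixed relations, $[A_i,B_j]$ is complete when $i\not\equiv j\pmod 3$ and empty when $i\equiv j\pmod 3$. The empty cases come from a claw centred at a cycle-vertex that is adjacent to $B_j$ but not to $A_i$ (or vice versa) together with the two non-adjacent cycle-neighbours that part carries; the complete cases come from a claw centred at a common cycle-neighbour $v$ of the putative non-adjacent pair, whose third leaf is a cycle-vertex adjacent to $v$ but to neither member. Finally the layering terminates: $N_2=\emptyset$, and this is both immediate and uniform, because every $x\in N_1$ has two non-adjacent neighbours on the cycle (the endpoints of its $P_3$-trace, or an antipodal pair from its $2K_2$-trace); a putative $w\in N_2$ adjacent to $x$ is non-adjacent to both, producing a claw centred at $x$. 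Since $G$ is connected it follows that $V(G)=N_0\cup N_1=\left(\bigcup_{i=0}^5 A_i\right)\cup\left(\bigcup_{j=0}^2 B_j\right)$, and collating the clique and join statements above shows that $G$ is precisely the graph of Figure~\ref{p6c5k13} (with some $B_j$ possibly empty).

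I expect the main obstacle to be the classification in the first paragraph: there are many induced subgraphs of $C_6$ to consider, and each must be knocked out with the correct one of $P_6$, $C_5$, $K_{1,3}$, so this is where bookkeeping errors would hide. A secondary subtlety is the $[A_i,A_{i+1}]$-complete step, where the required $P_6$ is not the obvious short path but the one routed the long way around the hexagon (a claw does not suffice there); and throughout the $A$--$B$ analysis one must keep the indices straight simultaneously mod $6$ for the $A$'s and mod $3$ for the $B$'s.
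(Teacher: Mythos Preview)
Your argument is essentially the paper's own: BFS layering from the $C_6$, classification of the trace $N(x)\cap N_0$ for $x\in N_1$, and then a case-by-case verification of every adjacency among the $A_i$ and $B_j$ by exhibiting a forbidden subgraph. Two bookkeeping slips of exactly the sort you anticipated: the trace $3K_1$ is missing from your list (it is the prototypical ``independent triple'', so your claw argument covers it immediately), and a couple of your claws are miscentred---for the distance-$2$ trace the claw is centred at one of the two neighbours, not at the vertex between them (which is non-adjacent to $x$), and for the empty $[A_i,B_j]$ and $[B_j,B_{j'}]$ relations the claw should be centred at $x$ or $y$ rather than at a cycle vertex. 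The only other cosmetic difference is that the paper uses an induced $C_5$, not a claw, to force $[A_i,B_{i+1}]$ and $[A_i,B_{i+2}]$ complete; your claw argument also works there once the correct common neighbour is chosen.
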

\begin{proof}
Let $G$ be a connected $\{P_6,C_5,K_{1,3}\}$-free graph that contains an induced $C_6\cong $\newline $ \langle\{v_0,v_1,v_2,v_3,v_4,v_5\}\rangle=\langle N_0\rangle$, and let $N_i=\{x\in V(G):\mathrm{ dist}(x,N_0)=i\},$ $i\geq1$.

\noindent\textbf{Claim 1:} If $x\in N_1$, then $\langle N(x)\cap N_0\rangle\cong P_3$ or $2K_2$.

For $x\in N_1$, the possibilities for $\langle N(x)\cap N_0\rangle$ are $K_1, K_2, P_3, P_4,P_5, 2K_1,3K_1, 2K_2, K_1\cup K_2,K_1\cup P_3$ and $C_6$. Here (a) if $\langle N(x)\cap N_0\rangle \cong K_1$ or $K_2$, then $P_6\sqsubseteq G$,
(b) if $\langle N(x)\cap N_0\rangle\cong P_4$ or $K_1\cup K_2$, then $C_5\sqsubseteq G$, (c) if $\langle N(x)\cap N_0\rangle\cong P_5$ or $C_6$ (or) $K_1\cup P_3$ (or) $3K_1$ (or) $2K_1$, then $K_{1,3}\sqsubseteq G$, a contradiction. Finally,  if $\langle N(x)\cap N_0\rangle\cong P_3$ or $2K_2$, we see that neither $P_6$ nor $C_5$ (nor) $K_{1,3}$ is an induced subgraph of $\langle N_0\cup N_1\rangle$. Thus $\langle N(x)\cap N_0\rangle\cong P_3$ or $2K_2$.

Throughout this proof, for any integer $i$, $v_i$ means $v_{i\ (\textnormal{mod}\ 6)}$ and $A_i$ means $A_{i\ (\textnormal{mod}\ 6)}$. For $0 \leq i\leq 5$, let $A_i=\{x\in N_1:  N(x)\cap N_0=\{v_{i-1},v_i,v_{i+1}\}\}\cup \{v_i\}$ and  $B_i=\{x\in N_1:  N(x)\cap N_0=\{v_{i-2},v_{i-1},v_{i+1},v_{i+2}\}\}$.

\noindent\textbf{Claim 2:} $\langle\cup_{i=0}^5A_i\rangle\cong \mathbb{K}[C_6]$.

For every $i,$ $0 \leq i\leq 5$, we have (a) $\langle A_i\rangle$ is complete (suppose if there exist vertices $x,y \in A_i$ such that $xy\notin E(G)$,  then $\langle\{v_{i+1},v_{i+2},x,y\}\rangle\cong K_{1,3}\sqsubseteq G$), (b) $[A_i,A_{i+1}]$ is complete, (if not, there exist vertices $x \in A_i$ and $ y \in A_{i+1}$ such that $xy\notin E(G)$, and hence $\langle\{x,v_{i},y,v_{i+2},v_{i+3},v_{i+4}\}\rangle\cong P_6\sqsubseteq G$), (c) $[A_i,A_{i+2}]=\emptyset$, (suppose if there exist vertices $x \in A_i$ and $ y \in A_{i+2}$ such that $xy\in E(G)$, then $\langle\{x,y,v_{i+3},v_{i+4},v_{i+5}\}\rangle\cong C_5\sqsubseteq G$), (d) $[A_i,A_{i+3}]=\emptyset$, (otherwise as shown previously, we can find $x \in A_i$ and $y \in A_{i+3}$ such that $xy\in E(G)$, and $\langle\{x,v_{i-1},v_{i+1},y\}\rangle\cong K_{1,3}\sqsubseteq G$). Thus from (a), (b), (c) and (d), it can be seen that $\langle\cup_{i=0}^5A_i\rangle\cong \mathbb{K}[C_6]$.

\noindent\textbf{Claim 3:} $\langle B_i\rangle$ is complete, for $i=0,1,2,3,4,5$.

Here, if there exist vertices $x,y\in B_i$ such that $xy\notin E(G)$, then $\langle\{v_{i-1},v_{i},x,y\}\rangle\cong K_{1,3}\sqsubseteq G$, a contradiction.

\noindent\textbf{Claim 4:} $[B_i,B_{i+1}]=\emptyset$, for $i=0,1,2,3,4,5$.

Suppose if there exist vertices $x \in B_i$ and $y \in B_{i+1}$ such that $xy\in E(G)$, then $\langle\{x,y,v_{i+1},v_{i-2}\}\rangle\cong K_{1,3}\sqsubseteq G$, a contradiction.

\noindent\textbf{Claim 5:} $[A_i,B_{i}]=\emptyset$, $i=0,1,2,3,4,5$.

On the contrary, if there exist vertices $x \in A_i$ and $y \in B_{i}$ such that $xy\in E(G)$, then $\langle\{y,x,v_{i-2},v_{i+2}\}\rangle\cong K_{1,3}\sqsubseteq G$, a contradiction.

\noindent\textbf{Claim 6:} $[A_i,B_{i+1}]$ is complete, for $i=0,1,2,3,4,5$.

If not, there exist vertices $x \in A_i$ and $y \in B_{i+1}$ such that $xy\notin E(G)$. Here $\langle\{x,v_{i-1},y,v_{i+2},v_{i+1}\}\rangle\cong C_5\sqsubseteq G$, a contradiction.

\noindent\textbf{Claim 7:} $[A_i,B_{i+2}]$ is complete, for $i=0,1,2,3,4,5$.

It is easy to observe that if there exist vertices $x \in A_i$ and $y \in B_{i+2}$ such that $xy\notin E(G)$, then $\langle\{x,v_{i-1},v_{i-2},y,v_{i+1}\}\rangle\cong C_5\sqsubseteq G$, a contradiction.

\noindent\textbf{Claim 8:} $N_i=\emptyset$, for all $i$, $i\geq2$.

It is enough to show that $N_2=\emptyset$. Suppose $N_2\neq\emptyset$, then there exists a vertex $x\in N_2$. Since $G$ is connected, there exists a vertex $y\in A_j$ or $y\in B_j$ for some $j\in\{0,1,\ldots,5\}$ such that $xy\in E(G)$. Then $\langle\{y,v_{j-1},v_{j+1},x\}\rangle \cong K_{1,3}\sqsubseteq G$, a contradiction. Thus $V(G)=N_0\cup N_1$.

Note that $B_j=B_{j+3}$ for every $j\in\{0,1,2\}$. From all these Claims, we see that $G$ will be isomorphic to the graph shown in Figure \ref{p6c5k13}.
\end{proof}

An immediate consequence of Theorem \ref{p6c5} is given in Corollary \ref{p6p5c}.
\begin{cor}\label{p6p5c}
If $G$ is a connected $\{P_6,C_5, \overline{P_5}, K_{1,3}\}$-free graph that contains an induced $C_6$ then  $G\cong \mathbb{K}[C_6]$.
\end{cor}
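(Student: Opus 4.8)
The plan is to invoke the structural description from Theorem~\ref{p6c5} and then show that the additional forbidden subgraph $\overline{P_5}$ forces all the ``thick'' expansion sets to collapse to singletons, leaving precisely $\mathbb{K}[C_6]$ (which is in fact isomorphic to a $6$-cycle itself, since each vertex class is a single vertex). By Theorem~\ref{p6c5}, $G$ has the form $V(G)=(\cup_{i=0}^5 A_i)\cup(\cup_{j=0}^2 B_j)$ with $\langle A_i\rangle$, $\langle B_j\rangle$ complete, the consecutive joins $[A_i,A_{i+1}]$ complete, the nonconsecutive ones empty, and the $B_j$'s joined to $A_i$ according to Claims~5--7. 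So the entire task reduces to a finite case analysis: which of these sets can contain two vertices, and which can be nonempty, without creating an induced $\overline{P_5}$.

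First I would record what $\overline{P_5}$ looks like as given in Figure~\ref{free}: it is the ``house'' graph, a $C_5$ with one chord, equivalently $K_2 + (K_1\cup K_2)$ — two adjacent vertices each joined to all three vertices of a path $P_3$. The key observation to exploit is that $\mathbb{K}[C_6]$ itself already contains $\overline{P_5}$ as soon as any class has size $\geq 2$ or any $B_j$ is nonempty. Concretely, I would first show each $B_j=\emptyset$: if $y\in B_j$, then by Claims~6 and 7 the vertex $y$ is complete to $A_{j-1}$ and $A_{j+1}$ (indices read appropriately), and together with two adjacent vertices of the $C_6$-skeleton this produces a $K_2+(K_1\cup K_2)\cong\overline{P_5}$. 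I expect a short explicit five-vertex witness using $y$, one vertex from each of two consecutive $A$-sets, and the cycle vertices to do the job.

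Next, with all $B_j$ empty so that $G\cong\mathbb{K}[C_6](m_0,\ldots,m_5)$, I would show every $m_i=1$. Suppose some $A_i$ contains two distinct vertices $x,x'$; since $\langle A_i\rangle$ is complete they are adjacent, and each is joined to all of $A_{i-1}$ and $A_{i+1}$. Picking one vertex from $A_{i-1}$, one from $A_{i+1}$, and a vertex of $A_{i+2}$ (adjacent to the $A_{i+1}$ vertex but not to $x,x'$ or the $A_{i-1}$ vertex), the five vertices $\{x,x',\bullet,\bullet,\bullet\}$ should again induce the house $\overline{P_5}$. The main thing to get right is the precise adjacency bookkeeping in $\mathbb{K}[C_6]$ so that exactly the house pattern $K_2+(K_1\cup K_2)$ appears and no extra edge spoils it; this is the only real obstacle, and it is purely a matter of checking the six-cycle adjacency relations on a fixed five-vertex set.

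Finally, having shown $B_j=\emptyset$ for all $j$ and $m_i=1$ for all $i$, I conclude $G\cong\mathbb{K}[C_6](1,1,1,1,1,1)$, which is exactly the $6$-cycle, i.e. $G\cong\mathbb{K}[C_6]$ in the degenerate sense of the complete expansion with all parts singletons. The connectedness hypothesis guarantees we are in the single structure of Theorem~\ref{p6c5} rather than a disjoint union, so no gluing of components is needed. The whole argument is thus two collapse lemmas (kill the $B_j$, then trivialize the $A_i$) followed by an immediate identification, and I would write it as two short paragraphs each exhibiting one induced $\overline{P_5}$.
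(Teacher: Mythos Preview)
Your first step---showing each $B_j=\emptyset$ via an explicit induced $\overline{P_5}$---is exactly what the paper does (the witness is $\{x,v_{i-1},v_i,v_{i+1},v_{i+2}\}$ for $x\in B_i$), so that part is fine.

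The problem is your second step, which rests on a misreading of the notation. In this paper $\mathbb{K}[C_6]$ is shorthand for $\mathbb{K}[C_6](m_1,\ldots,m_6)$ with \emph{arbitrary} $m_i\ge 1$ (see the definition of complete expansion just before Theorem~\ref{p5k3}); it does not mean the trivial blow-up with all singleton classes. So once the $B_j$'s are gone, Theorem~\ref{p6c5} already gives $G\cong\langle\cup_{i=0}^5 A_i\rangle\cong\mathbb{K}[C_6]$ and the proof is complete. There is no ``collapse the $A_i$'' lemma to prove.

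Worse, the collapse lemma you propose is actually false, and your sketched argument for it cannot work. The graph $\overline{P_5}$ (the house) contains an induced $C_4$, but every $\mathbb{K}[C_6](m_1,\ldots,m_6)$ is $C_4$-free: four vertices chosen from the classes $A_i$ can never induce a $4$-cycle, since any two vertices in the same class are adjacent and adjacency between classes is determined by adjacency in $C_6$, which has no induced $C_4$. Hence $\mathbb{K}[C_6]$ with some $|A_i|\ge 2$ is still $\overline{P_5}$-free, and your promised five-vertex witness $\{x,x',\bullet,\bullet,\bullet\}$ does not exist. (If you try your specific suggestion---$x,x'\in A_i$, one vertex each from $A_{i-1},A_{i+1},A_{i+2}$---you get degree sequence $3,3,3,2,1$, not the $3,3,2,2,2$ of the house.) So drop the second paragraph entirely; the corollary follows immediately from $B_j=\emptyset$.
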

\begin{proof}
It can seen from Theorem \ref{p6c5} that it is enough to show that $B_i=\{x\in N_1: N(x)\cap N_0=\{v_{i-2},v_{i-1},v_{i+1},v_{i+2}\}\}=\emptyset$, for every $i=0,1,2$. Suppose $x\in B_i$ for some  $i\in\{0,1,2\}$, then $\langle \{x,v_{i-1},v_i,v_{i+1},v_{i+2}\}\rangle\cong \overline{P_5}\sqsubseteq G$, a contradiction. Thus $G$ is isomorphic to a complete expansion of $C_6$.
\end{proof}

\noindent Even though the graph $G$ shown in Figure \ref{p6c5k13} looks simple, it looks challenging to obtain the indicated chromatic number of $G$. So, we have considered the indicated coloring of $ \mathbb{K}[C_6]$.
\begin{prop}\label{expc6}
For $1\leq i\leq 6$, let $m_i$'s be  positive integers. Then the graph $G=$\linebreak  $\mathbb{K}[C_6](m_1,m_2,m_3,m_4,m_5,m_6)$ is $k$-indicated colorable for all $k\geq\chi(G)$.
\end{prop}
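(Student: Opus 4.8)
The plan is to first compute $\chi(G)$ and then design a presentation order for Ann, the delicate point being how she ``closes the cycle''. Throughout write $A_0,\dots ,A_5$ for the six cliques replacing the vertices $v_0,\dots ,v_5$ of $C_6$, so that $A_i=K_{m_i}$, each $A_i$ is completely joined to $A_{i-1}$ and $A_{i+1}$ (indices mod $6$) and is anticomplete to the remaining three cliques. For the chromatic number, note that every maximal clique of $G$ lies inside a consecutive pair $A_i\cup A_{i+1}$, so $\omega(G)=\max_{0\le i\le 5}(m_i+m_{i+1})=:t$; since $C_6$ is bipartite it is perfect, and a clique blow-up of a perfect graph is perfect, whence $\chi(G)=\omega(G)=t$. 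Fix $k\ge t$; the task is to give Ann a winning strategy with $k$ colours.

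For the strategy I would present the cliques in breadth-first order from a root clique, say $A_0$: first all of $A_0$, then $A_1$ and $A_5$, then $A_2$ and $A_4$, and finally the antipodal clique $A_3$. The point of this order is that, when it is presented, every clique \emph{except the last} is adjacent to exactly one previously coloured clique; since $k\ge m_{i-1}+m_i$ for all $i$, Ben always has a free colour at each such step and Ann never loses a vertex. This is morally the sweep that colours the path blow-up $\mathbb{K}[P_6]$ obtained by deleting the edges between $A_0$ and $A_5$, which is an interval graph, hence chordal, and thus $k$-indicated colourable for all $k\ge\chi$ by the known result on chordal graphs \cite{fran,pan}; so only the cyclic closure can cause trouble.

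That obstruction is the last clique $A_3$, adjacent to both already coloured cliques $A_2$ and $A_4$: its vertices can all be coloured if and only if $|c(A_2)\cup c(A_4)|\le k-m_3$. Here a fixed order does \emph{not} suffice. As $A_2$ and $A_4$ are non-adjacent and have $A_3$ as their unique common neighbour, nothing prevents Ben from colouring them with disjoint palettes, and when $m_2+m_3+m_4>k$ this blocks a vertex of $A_3$. The heart of the proof is therefore to force $c(A_2)$ and $c(A_4)$ to share at least $m_2+m_3+m_4-k$ colours. I would achieve this adaptively: instead of finishing $A_2$ and $A_4$ first, Ann interleaves the last vertices of $A_2$, $A_4$ and $A_3$, maintaining the invariant that $A_2$, $A_4$ and the already-coloured part of $A_3$ jointly use at most $k$ colours. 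Because these three cliques must eventually receive $m_2+m_3+m_4>k$ colours in total while $A_3$ is colour-disjoint from each of $A_2,A_4$, the only admissible reuse is between $A_2$ and $A_4$; by timing her presentations so that each newly coloured vertex of $A_3$ deletes a colour from the pool common to $A_2$ and $A_4$, Ann compels Ben to realise exactly this reuse, keeping $|c(A_2)\cup c(A_4)|\le k-m_3$ to the end.

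I expect this closing step to be the main obstacle. A complete proof of it requires a precise description of Ann's reply to each of Ben's colour choices and, most likely, a short case analysis according to how $m_1,\dots ,m_5$ compare with $k$ — equivalently, how much overlap must be forced, and whether it is $A_2$ or $A_4$ that risks being squeezed below its required $m_i$ colours while $A_3$ claims its share. The freedom to choose the root of the breadth-first search, and hence which clique is closed last, can be exploited to minimise the overlap that has to be forced. Once $A_3$ is completed every vertex is coloured, so Ann wins with $k$ colours for every $k\ge\chi(G)=t$.
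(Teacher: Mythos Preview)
Your computation of $\chi(G)=\omega(G)$ is fine, and you are right that the only difficulty is closing the cycle. But the closing step is a genuine gap, not just ``the main obstacle'': the mechanism you propose does not do what you claim. Presenting a vertex of $A_3$ removes a colour from the palettes of \emph{both} $A_2$ and $A_4$, but it is Ben who chooses the colours that land on $A_2$ and $A_4$, and nothing in your interleaving compels him to reuse. Concretely, take $(m_0,\dots,m_5)=(1,1,2,2,2,1)$, $k=\omega=4$, and root at $A_0$. After Ben plays $c(A_0)=1$, $c(A_1)=2$, $c(A_5)=3$, the palettes of $A_2$ and $A_4$ are $\{1,3,4\}$ and $\{1,2,4\}$; you need $|c(A_2)\cup c(A_4)|\le 2$, which forces $c(A_2)=c(A_4)=\{1,4\}$. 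Whatever Ann presents first, Ben kills this: on a vertex of $A_2$ he plays $3$, on a vertex of $A_4$ he plays $2$, on a vertex of $A_3$ he plays $1$ or $4$. In each case the required overlap of $2$ is already unachievable and some later vertex is blocked, regardless of how Ann interleaves afterwards. So your approach fails for this root, and you give no rule for choosing a root that provably avoids the problem.

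The paper sidesteps the difficulty by rooting not at a vertex but at a maximum-clique \emph{edge}. Ann first presents a maximum clique $V_1\cup V_2$ (so $|V_1|+|V_2|=\omega$), then $V_3$ and $V_6$, and closes on the antipodal edge $V_4\cup V_5$. Two facts now come for free. First, $|V_4|+|V_5|\le\omega$ because $V_1\cup V_2$ was maximum. Second, since $c(V_3)$ avoids $c(V_2)=\{m_1{+}1,\dots,\omega\}$ while $c(V_6)$ avoids $c(V_1)=\{1,\dots,m_1\}$, every colour in $\{1,\dots,\omega\}$ remains available to at least one of $V_4,V_5$; hence the union of their palettes has size $\ge\omega\ge|V_4|+|V_5|$. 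The closing rule is then a single line: present $V_4$ until the palette of $V_5$ shrinks to exactly $|V_5|$, then all of $V_5$, then the rest of $V_4$. Because $V_4$ and $V_5$ are \emph{adjacent}, this is just distributing $|V_4|+|V_5|$ distinct colours inside a pool of size $\ge\omega$; no overlap between non-adjacent cliques ever needs to be engineered.
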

\begin{proof}
Let us consider the graph $G=\mathbb{K}[C_6](m_1,m_2,m_3,m_4,m_5,m_6)$, where $m_i\geq1$ and $V_i=V(K_{m_i})$ for $1\leq i\leq 6$. Let $k$ be a positive integer such that $k\geq\chi(G)$ and let  $\{1,2,\ldots,k\}$ be the set of colors. We shall show that $G$ is $k$-indicated colorable. It is easy to see that $\chi(G)=\omega(G)$. Let Ann start by presenting the vertices of a maximum clique. Without loss of generality, let it be $V_1\cup V_2 $. Since $k\geq\omega(G)$, Ben has an available color for all the vertices in $V_1\cup V_2$ and let the colors given to $V_1$ and $V_2$ be $\{1,2,\ldots,m_1\}$ and $\{m_1+1,m_1+2,\ldots,m_1+m_2=\omega(G)\}$ respectively. Now, let Ann present the vertices of $V_3$ and $V_6$ (in any order). Since $V_1\cup V_2$ is maximum clique, $|V_3|\leq |V_1|$ and $|V_6|\leq |V_2|$, and hence Ben
has an available colors for all the vertices of $V_3$ and $V_6$. Since $[V_i,V_{i+1}]$ is complete, $\{1,2,\ldots,m_1\}$ and $\{m_1+1,m_1+2,\ldots,\omega(G)\}$ are colors available for $V_5$ and $V_4$ respectively. Now, let Ann present the vertices of $V_4$ until either the number of available colors for $V_5$ is equal to $|V_5|$ or every vertex in $V_4$ is colored. In either case, let Ann proceed by presenting all the vertices of $V_5$. If there are some uncolored vertices in $V_4$, let Ann present those vertices finally. Here it can be seen that, the number of available color for the vertices of $V_4\cup V_5$ is at least $\omega(G)$ and $|V_4|+|V_5|\leq \omega(G)$. Thus in this ordering, Ben will always have an available color for the remaining vertices.
\end{proof}

\noindent An immediate consequence of Theorem \ref{union}, Corollary \ref{p6p5c} and Proposition \ref{expc6} is given in Corollary \ref{p6c5ind}.
\begin{cor}\label{p6c5ind}
If $G$ is a $\{P_6,C_5, \overline{P_5}, K_{1,3}\}$-free graph that contains an induced $C_6$, then $G$ is $k$-indicated colorable for all $k\geq \chi(G)$.
\end{cor}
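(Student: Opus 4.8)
The plan is to read the corollary exactly as the sentence introducing it advertises, namely as a three-line composition of Corollary \ref{p6p5c} (which identifies the connected members of the class), Proposition \ref{expc6} (which colors them), and Theorem \ref{union} (which glues components together). First I would reduce to the connected case. The indicated game on a disjoint union $G=G_1\cup G_2$ splits into two non-interacting games across the edgeless cut, so Theorem \ref{union} applies verbatim; its conclusion is phrased in terms of $\chi_i$, but for the components that will arise this costs nothing, since Proposition \ref{expc6} shows $\mathbb{K}[C_6]$ is $k$-indicated colorable already at $k=\chi$, forcing $\chi_i(\mathbb{K}[C_6])=\chi(\mathbb{K}[C_6])$. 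Iterating Theorem \ref{union} over the components $G_i$ then gives $\chi_i(G)=\max_i\chi_i(G_i)=\max_i\chi(G_i)=\chi(G)$, so that $k$-indicated colorability of each component for all $k\geq\chi(G_i)$ upgrades to $k$-indicated colorability of $G$ for all $k\geq\chi(G)$.

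Next I would invoke the structure theorem on each connected component. For a connected $\{P_6,C_5,\overline{P_5},K_{1,3}\}$-free graph that contains an induced $C_6$, Corollary \ref{p6p5c} gives $G\cong\mathbb{K}[C_6](m_1,\ldots,m_6)$ for some positive integers $m_1,\ldots,m_6$, and Proposition \ref{expc6} then delivers $k$-indicated colorability for all $k\geq\chi(G)$. Combined with the reduction above, this closes the argument in a single stroke once every component has been placed in a handled family.

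The step I expect to be the main obstacle is precisely the bookkeeping in that last clause: the components of $G$ that do not themselves contain an induced $C_6$. Corollary \ref{p6p5c} only describes connected members of the class that \emph{do} contain an induced $C_6$, so a component that is $\{P_6,C_5,\overline{P_5},K_{1,3}\}$-free but $C_6$-free need not be an $\mathbb{K}[C_6]$, and none of the results available in this excerpt applies to it verbatim. The cleanest way to read the statement as an immediate consequence of the three cited results is to take $G$ connected (matching the hypothesis phrasing of Corollary \ref{p6p5c}), whereupon $G\cong\mathbb{K}[C_6]$ and Proposition \ref{expc6} finishes in one line. If disconnected $G$ must genuinely be allowed, one would additionally have to show that every $C_6$-free component of such a graph also lies in a family already known to be $k$-indicated colorable for all $k\geq\chi$; establishing that structural fact, rather than tacitly assuming it, is where the real effort would go.
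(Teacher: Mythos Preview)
Your proposal is correct and matches the paper exactly: the corollary is stated without proof, merely as ``an immediate consequence of Theorem~\ref{union}, Corollary~\ref{p6p5c} and Proposition~\ref{expc6}'', which is precisely the three-step composition you spell out. The bookkeeping issue you flag about $C_6$-free components in the disconnected case is legitimate, and the paper does not address it either; the intended reading is evidently the connected one.
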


\section{Indicated coloring of $\mathbb{K}[C_5]$ and some of its consequences}

Let us start this Section by recalling two of the results which were proved in \cite{pan}.
\begin{thm}$\mathrm{(\cite{pan})}$.
\label{col}Any graph $G$ is $k$-indicated colorable for all $k\geq\mathrm{col}(G)$.
\end{thm}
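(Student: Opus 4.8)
The plan is to exploit the combinatorial meaning of $\mathrm{col}(G)$ directly: it guarantees an ordering of the vertices in which every vertex sees only a bounded number of earlier vertices, and Ann simply presents the vertices in that order. Write $d=\mathrm{col}(G)-1=\max_{H\subseteq G}\delta(H)$, and fix any $k\geq\mathrm{col}(G)=d+1$ together with the color set $\{1,2,\ldots,k\}$.

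First I would produce the ordering. Since $\max_{H\subseteq G}\delta(H)=d$, every nonempty subgraph of $G$ contains a vertex of degree at most $d$. Peeling off such vertices one at a time yields an enumeration $u_1,u_2,\ldots,u_n$ in which each $u_i$ has at most $d$ neighbors in $G-\{u_1,\ldots,u_{i-1}\}$, that is, at most $d$ neighbors among $\{u_{i+1},\ldots,u_n\}$. Reversing this, set $v_j=u_{n-j+1}$; then in the ordering $v_1,v_2,\ldots,v_n$ each $v_j$ has at most $d$ neighbors among its predecessors $\{v_1,\ldots,v_{j-1}\}$.

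Next I would describe Ann's strategy and verify that it wins. Let Ann present the vertices in the fixed order $v_1,v_2,\ldots,v_n$, regardless of Ben's responses. At the moment $v_j$ is presented, the colored vertices are exactly $v_1,\ldots,v_{j-1}$, so the colored neighbors of $v_j$ number at most $d$. Hence at most $d$ colors are forbidden on $v_j$, and since $k\geq d+1$ at least one color remains available. Therefore Ben can always color the presented vertex and can never create a block vertex, so Ann colors the whole graph and wins with $k$ colors. As this works for every $k\geq\mathrm{col}(G)$, the result follows.

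The argument has essentially no obstacle beyond correctly extracting the ordering from the definition of $\mathrm{col}(G)$; the only point that needs care is that the bound must control the number of \emph{colored} (i.e.\ previously presented) neighbors rather than the total degree, which is precisely why the reversed degeneracy ordering, and not an arbitrary one, must be used.
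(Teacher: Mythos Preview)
Your argument is correct: the degeneracy ordering guarantees that when Ann presents the vertices in the reversed peeling order, each vertex has at most $d=\mathrm{col}(G)-1$ previously colored neighbors, so with $k\geq d+1$ colors Ben can never produce a block vertex. Note, however, that the paper does not actually supply a proof of this statement; it is merely recalled from \cite{pan}, so there is no in-paper argument to compare yours against. Your proof is the standard one and is exactly what one would expect the cited reference to contain.
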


We know that, for the join of two graphs $G_1$ and $G_2$, $\chi(G_1+G_2)=\chi(G_1)+\chi(G_2)$. The same holds even for the indicated chromatic number.
\begin{thm}
$\mathrm{(\cite{pan})}$. \label{join}Let $G = G_1 + G_2$. If $G_1$ is $k_1$-indicated colorable for every $k_1 \geq\chi_i(G_1)$ and $G_2$ is
$k_2$-indicated colorable for every $k_2\geq\chi_i(G_2)$, then $\chi_i(G) = \chi_i(G_1)+\chi_i(G_2)$ and $G$ is $k$-indicated
colorable for all $k\geq\chi_i(G)$.
\end{thm}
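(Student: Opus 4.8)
The plan is to prove the two assertions separately, both driven by the single structural fact that in the join every vertex of $G_1$ is adjacent to every vertex of $G_2$; hence in \emph{any} proper coloring of $G$ the set of colors appearing on $V(G_1)$ is disjoint from the set appearing on $V(G_2)$. Writing $a=\chi_i(G_1)$ and $b=\chi_i(G_2)$, this disjointness already forces $\chi(G)=\chi(G_1)+\chi(G_2)$, and it is the mechanism I would exploit throughout. (It is precisely this forced disjointness, absent in the union analogue Theorem~\ref{union}, that makes the join case genuinely harder.) I would first establish the upper bound, namely that Ann wins on $G$ with any $k\geq a+b$ colors, which gives $\chi_i(G)\leq a+b$ on taking $k=a+b$, and then the matching lower bound $\chi_i(G)\geq a+b$.

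For the upper bound, the naive idea of letting Ann play out her whole winning strategy on $G_1$ and only afterwards on $G_2$ fails: if $G_1$ carries independent vertices, Ben can answer each with a fresh color, exhaust the palette, and leave fewer than $b$ colors for $G_2$. The smallest instance is $C_4=\overline{K_2}+\overline{K_2}$ with $k=2$, where presenting both $G_1$-vertices first lets Ben use both colors and block $G_2$. So Ann must \emph{interleave} the two sub-games. The strategy I would set up designates a color class $A$ of size $a$ for $G_1$, lets the remaining $k-a\geq b$ colors serve $G_2$, and runs Ann's guaranteed winning strategy on $G_1$ (as a game with $a$ colors) together with her winning strategy on $G_2$ (as a game with $k-a\geq b$ colors), presenting $G_1$- and $G_2$-vertices on an alternating schedule. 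The schedule is chosen so that the colors already placed on one side, being adjacent to \emph{all} of the other side, restrict the other side's available palette; the invariant to maintain is that the colors used on $G_1$ stay inside a set of size $a$ while those used on $G_2$ stay inside the complementary set. Here I would crucially invoke the hypothesis that $G_1$ is colorable for \emph{every} $k_1\geq a$ and $G_2$ for every $k_2\geq b$: this both absorbs the surplus $k-a-b$ colors, parked on the $G_2$ side, and sidesteps the fact that indicated colorability is not monotone in the number of colors.

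For the lower bound I would give Ben a strategy that blocks whenever $k=a+b-1$. Since $G_1$ is not $(a-1)$-indicated colorable and $G_2$ is not $(b-1)$-indicated colorable, Ben owns blocking strategies on each part against a deficient palette, and he combines them by treating the colors \emph{not yet used on $G_2$} as his working palette on $G_1$, and symmetrically. As $G_2$ accrues colors, its adjacency to all of $G_1$ shrinks $G_1$'s effective palette, so Ben steers the play so that one of the two parts is eventually confronted with a palette of size strictly below its indicated chromatic number, at which point his single-part blocking strategy produces a blocked vertex. The two threats, blocking $G_1$ or blocking $G_2$, are complementary within the $a+b-1$ colors, and whichever side Ann neglects is the side Ben attacks.

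The main obstacle is exactly this coupling: color-disjointness together with Ben's freedom to choose colors means a careless schedule lets Ben inflate the palette on one part and starve the other. The heart of the argument is therefore the bookkeeping of the interleaving schedule and the verification that the intended palette partition survives every one of Ben's replies, using only that each $G_i$ is colorable for all $k_i\geq\chi_i(G_i)$. Once that invariant is secured in both directions, the remaining steps reduce to the disjointness observation recorded at the outset.
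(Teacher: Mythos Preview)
The paper does not prove this theorem; it is quoted from \cite{pan} and used as a black box. So there is no ``paper's proof'' to compare against, and what matters is whether your sketch stands on its own.

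Your diagnosis of the difficulty is correct: presenting all of $G_1$ before $G_2$ can fail, and the $C_4=\overline{K_2}+\overline{K_2}$ example is exactly the right witness. However, the proposal does not get past this diagnosis. For the upper bound you write that Ann ``designates a color class $A$ of size $a$ for $G_1$'' and that ``the schedule is chosen so that \ldots\ the colors used on $G_1$ stay inside a set of size $a$,'' but you never say what the schedule is or why the invariant survives Ben's replies. This is not a cosmetic omission: Ann has no control over which color Ben picks, and when she presents a $G_1$-vertex whose $G$-forbidden set does not yet cover $\{1,\ldots,k\}\setminus A$, Ben is free to answer with a color outside $A$. Forcing him inside $A$ would require every color of $\{1,\ldots,k\}\setminus A$ to already sit on $V(G_2)$; but nothing you have said ensures that, and if $|V(G_2)|<k-a$ it is impossible. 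Some genuine mechanism --- a precise alternation rule together with a proof that the dynamic palettes never overflow, or an entirely different device --- is needed here, and it is exactly the content of the theorem you are trying to prove.

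The lower bound sketch has the same shape of gap. You say Ben ``combines'' his two deficient-palette blocking strategies by treating the colors not yet on $G_2$ as his working palette on $G_1$, but that palette changes every time Ann visits $G_2$, and a blocking strategy for $G_1$ with $a-1$ \emph{fixed} colors says nothing about a game whose palette shrinks mid-play. You would need to explain how Ben's two threats are synchronised so that at least one of them actually fires. As written, both halves of your plan assert the conclusion (``the invariant is maintained'', ``one side is eventually confronted with too few colors'') without supplying the argument. For the uses made of this theorem in the present paper (Corollary~\ref{p5hd}, Theorem~\ref{p2p3indi}) one factor is always a clique $K_t$, and then the easy strategy --- present $K_t$ first, which forces $t$ distinct colors and leaves $k-t\geq\chi_i(G_1)$ for $G_1$ --- already suffices; the general case genuinely requires more.
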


Let us recall the structural characterization of $\{P_2\cup P_3, C_4\}$-free graphs, $\{P_5,C_4\}$-free graphs and $\{P_5, (\overline{P_2\cup P_3}),\overline{P_5},\mathrm{Dart}\}$-free graphs which contains an induced $C_5$. The graphs $(\overline{P_2\cup P_3})$ and {Dart} are shown in Figure \ref{free}.

\begin{thm}$\mathrm{(\cite{cho})}$ \label{p2free}
If $G$ is a connected $\{P_2\cup P_3,C_4\}$-free graph, then $G$ is chordal or there exists a partition $(V_1,V_2,V_3)$ of $V(G)$
such that (1) $\langle V_1\rangle\cong \overline{K_m}$, for some $m\geq0$, (2) $\langle V_2\rangle\cong K_t$, for some $t\geq0$, (3) $\langle V_3\rangle$ is isomorphic to a graph
obtained from one of the basic graphs $G_t\ (1\leq t\leq17)$ shown in Figure \ref{basic} by expanding each vertex indicated in circle by a
complete graph (of order $\geq1$), (4) $[V_1,V_3]=\emptyset$ and (5) $[V_2,V_3\backslash S]$ is complete.
\end{thm}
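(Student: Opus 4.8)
The plan is to dispose of the chordal case at once and then use the two forbidden subgraphs to pin down the structure of a non-chordal $G$ around a shortest hole, in the spirit of the layered neighbourhood arguments used for Theorem \ref{p5k4kb} and Theorem \ref{p6c5}. First I would assume $G$ is not chordal, so it contains an induced cycle of length at least four; because $G$ is $C_4$-free, every hole has length at least five. The decisive preliminary step is to observe that any hole of length at least seven already contains an induced $P_2\cup P_3$: taking two consecutive vertices of the cycle as the $P_2$ and three consecutive vertices that are disjoint from and non-adjacent to them as the $P_3$ works whenever the cycle has at least seven vertices. Hence every hole of $G$ has length exactly five or six, and I would fix one such hole $C$ and partition $V(G)\setminus V(C)$ according to how each vertex's neighbourhood meets $C$.

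The second stage is a sequence of forbidden-subgraph claims analogous to Claims 1--10 in the proof of Theorem \ref{p5k4kb}. The $C_4$-free hypothesis does the work of forcing complete-graph expansions: vertices that attach to the core in the same way must be pairwise adjacent, since a pair of non-adjacent such vertices together with two suitable common neighbours would close an induced $C_4$; this is precisely what makes each circled vertex of a basic graph expand into a clique. The $(P_2\cup P_3)$-free hypothesis plays the complementary, and stronger, role of a locality constraint: every induced $P_3$ must dominate every edge of $G$ (otherwise an edge disjoint from and non-adjacent to the $P_3$ would complete a $P_2\cup P_3$), so no edge can sit far from the hole. I would use this to bound the number of layers---the analogue of proving $N_i=\emptyset$ for large $i$---and to force the peripheral vertices to separate into an independent family and a clique family.

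Finally I would set $V_1$ to be the independent set of peripheral vertices non-adjacent to the core, $V_2$ the clique of vertices completely joined to the core, and $V_3$ the core carrying the hole together with its clique-expanded neighbourhood, and then read off conditions (1)--(5) from the claims; here $S\subseteq V_3$ is the distinguished subset singled out by the case analysis, so that $[V_1,V_3]=\emptyset$ and $[V_2,V_3\setminus S]$ complete follow directly. The heart of the proof, and the step I expect to be the main obstacle, is the exhaustive classification that collapses every surviving possibility for $\langle V_3\rangle$ down to a complete expansion of one of the seventeen basic graphs $G_t$ ($1\le t\le 17$) of Figure \ref{basic}. Because the hole is only a $C_5$ or a $C_6$ and the $(P_2\cup P_3)$-free condition prevents $G$ from extending beyond a bounded distance from it, this is a finite enumeration; the delicate part is checking that the resulting list of cores is simultaneously complete and non-redundant.
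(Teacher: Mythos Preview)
This theorem is not proved in the paper: it is quoted verbatim from \cite{cho} (Choudum and Karthick), as the citation in the theorem heading indicates, and the paper uses it as a black box in the proof of Theorem~\ref{p2p3indi}. There is therefore no ``paper's own proof'' against which to compare your proposal.

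That said, your outline is a plausible sketch of the structural argument one would expect in \cite{cho}: reducing to holes of length five or six via the observation that $C_n$ with $n\ge 7$ contains an induced $P_2\cup P_3$, then running a layered neighbourhood analysis around a fixed hole, with $C_4$-freeness forcing clique expansions and $(P_2\cup P_3)$-freeness bounding the depth. Two cautions. First, your description of $V_2$ as ``the clique of vertices completely joined to the core'' and $V_1$ as vertices ``non-adjacent to the core'' is too coarse: condition~(5) only requires $[V_2,V_3\setminus S]$ complete, not $[V_2,V_3]$ complete, and the set $S$ is not something you can read off from adjacency to the hole alone---it is determined by the particular basic graph $G_t$ that arises (see Figure~\ref{basic}, where $S$ varies and is empty for $G_6$--$G_{17}$). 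Second, the finite enumeration producing exactly the seventeen basic graphs is the entire content of the theorem; calling it ``a finite enumeration'' is accurate but hides all of the work. If you want to reconstruct the proof you should consult \cite{cho} directly.
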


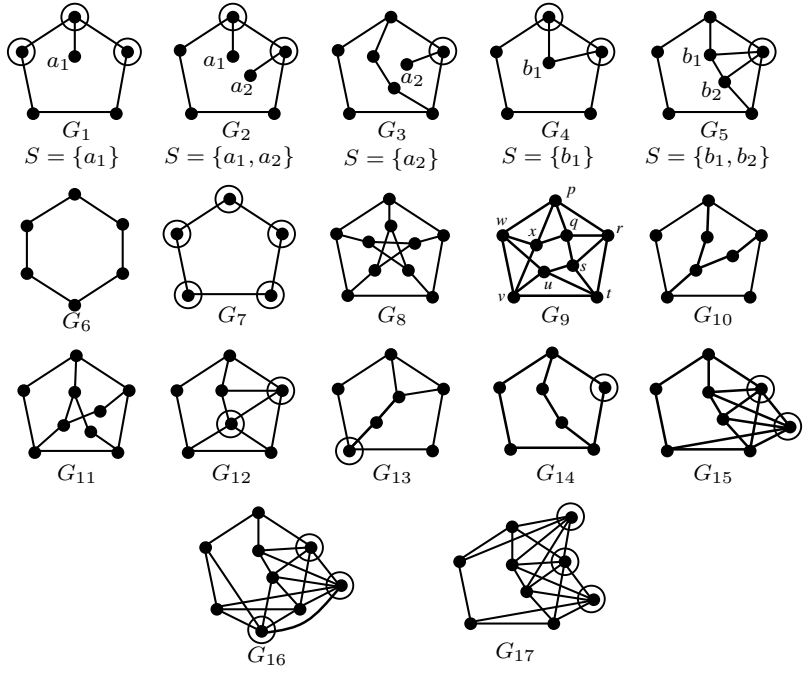
\begin{figure}[t]
  \centering
\scalebox{1.05} 
{
\begin{pspicture}(0,-4.23125)(9.98,4.19125)
\psdots[dotsize=0.16](8.28,2.79125)
\psdots[dotsize=0.16](9.32,2.79125)
\psdots[dotsize=0.16](8.8,4.01125)
\psdots[dotsize=0.16](9.46,3.57125)
\psdots[dotsize=0.16](8.14,3.57125)
\psline[linewidth=0.026cm](8.28,2.81125)(8.12,3.59125)
\psline[linewidth=0.026cm](8.28,2.81125)(9.38,2.81125)
\psline[linewidth=0.026cm](9.46,3.59125)(9.32,2.81125)
\psline[linewidth=0.026cm](8.78,4.03125)(9.48,3.57125)
\psline[linewidth=0.026cm](8.8,4.03125)(8.16,3.61125)
\psdots[dotsize=0.16](8.8,3.53125)
\psdots[dotsize=0.16](8.98,3.19125)
\psline[linewidth=0.026cm](8.8,4.01125)(8.8,3.57125)
\psline[linewidth=0.026cm](8.78,3.55125)(8.98,3.21125)
\psline[linewidth=0.026cm](8.98,3.19125)(9.32,2.81125)
\psline[linewidth=0.026cm](8.96,3.21125)(9.46,3.57125)
\psline[linewidth=0.026cm](8.78,3.53125)(9.42,3.57125)

\psdots[dotsize=0.16](0.32,2.79125)
\psdots[dotsize=0.16](1.36,2.79125)
\psdots[dotsize=0.16](0.84,4.01125)
\psdots[dotsize=0.16](1.5,3.57125)
\psdots[dotsize=0.16](0.18,3.57125)
\psline[linewidth=0.026cm](0.32,2.81125)(0.16,3.59125)
\psline[linewidth=0.026cm](0.32,2.81125)(1.42,2.81125)
\psline[linewidth=0.026cm](1.5,3.59125)(1.36,2.81125)
\psline[linewidth=0.026cm](0.82,4.03125)(1.52,3.57125)
\psline[linewidth=0.026cm](0.84,4.03125)(0.2,3.61125)
\psdots[dotsize=0.16](0.84,3.51125)
\psline[linewidth=0.026cm](0.84,4.05125)(0.84,3.51125)

\psdots[dotsize=0.16](2.3,2.79125)
\psdots[dotsize=0.16](3.34,2.79125)
\psdots[dotsize=0.16](2.82,4.01125)
\psdots[dotsize=0.16](3.48,3.57125)
\psline[linewidth=0.026cm](2.3,2.81125)(2.14,3.59125)
\psline[linewidth=0.026cm](2.3,2.81125)(3.4,2.81125)
\psline[linewidth=0.026cm](3.48,3.59125)(3.34,2.81125)
\psline[linewidth=0.026cm](2.8,4.03125)(3.5,3.57125)
\psline[linewidth=0.026cm](2.82,4.03125)(2.18,3.61125)
\psdots[dotsize=0.16](2.82,3.51125)
\psline[linewidth=0.026cm](2.82,4.05125)(2.82,3.51125)
\psdots[dotsize=0.16](2.16,3.59125)
\psdots[dotsize=0.16](3.04,3.27125)
\psline[linewidth=0.026cm](3.46,3.59125)(3.0,3.27125)

\psdots[dotsize=0.16](4.28,2.79125)
\psdots[dotsize=0.16](5.32,2.79125)
\psdots[dotsize=0.16](4.8,4.01125)
\psdots[dotsize=0.16](5.46,3.57125)
\psline[linewidth=0.026cm](4.28,2.81125)(4.12,3.59125)
\psline[linewidth=0.026cm](4.28,2.81125)(5.38,2.81125)
\psline[linewidth=0.026cm](5.46,3.59125)(5.32,2.81125)
\psline[linewidth=0.026cm](4.78,4.03125)(5.48,3.57125)
\psline[linewidth=0.026cm](4.8,4.03125)(4.16,3.61125)

\psdots[dotsize=0.16](4.58,3.51125)
\psdots[dotsize=0.16](4.14,3.59125)
\psdots[dotsize=0.16](5.0,3.41125)
\psdots[dotsize=0.16](4.84,3.11125)
\psline[linewidth=0.026cm](4.8,4.03125)(4.58,3.51125)
\psline[linewidth=0.026cm](4.56,3.53125)(4.82,3.11125)
\psline[linewidth=0.026cm](4.82,3.11125)(5.32,2.81125)
\psline[linewidth=0.026cm](4.98,3.41125)(5.46,3.59125)

\psdots[dotsize=0.16](6.26,2.79125)
\psdots[dotsize=0.16](7.3,2.79125)
\psdots[dotsize=0.16](6.78,4.01125)
\psdots[dotsize=0.16](7.44,3.57125)
\psdots[dotsize=0.16](6.12,3.57125)
\psline[linewidth=0.026cm](6.26,2.81125)(6.1,3.59125)
\psline[linewidth=0.026cm](6.26,2.81125)(7.36,2.81125)
\psline[linewidth=0.026cm](7.44,3.59125)(7.3,2.81125)
\psline[linewidth=0.026cm](6.76,4.03125)(7.46,3.57125)
\psline[linewidth=0.026cm](6.78,4.03125)(6.14,3.61125)
\psdots[dotsize=0.16](6.78,3.43125)
\psline[linewidth=0.026cm](6.78,4.03125)(6.78,3.45125)
\psline[linewidth=0.026cm](7.44,3.59125)(6.76,3.43125)

\psdots[dotsize=0.16](0.84,1.77125)
\psdots[dotsize=0.16](0.24,1.37125)
\psdots[dotsize=0.16](0.24,0.77125)
\psdots[dotsize=0.16](0.84,0.37125)
\psdots[dotsize=0.16](1.44,0.77125)
\psdots[dotsize=0.16](1.44,1.39125)
\psline[linewidth=0.026cm](0.24,1.39125)(0.24,0.81125)
\psline[linewidth=0.026cm](1.44,1.41125)(1.44,0.77125)
\psdots[dotsize=0.16](2.26,0.49125)
\psdots[dotsize=0.16](3.3,0.49125)
\psdots[dotsize=0.16](2.78,1.71125)
\psdots[dotsize=0.16](3.44,1.27125)
\psdots[dotsize=0.16](2.12,1.27125)
\psline[linewidth=0.026cm](2.26,0.51125)(2.1,1.29125)
\psline[linewidth=0.026cm](2.26,0.51125)(3.36,0.51125)
\psline[linewidth=0.026cm](3.44,1.29125)(3.3,0.51125)
\psline[linewidth=0.026cm](2.76,1.73125)(3.46,1.27125)
\psline[linewidth=0.026cm](2.78,1.73125)(2.14,1.31125)
\psline[linewidth=0.026cm](0.82,1.79125)(1.42,1.39125)
\psline[linewidth=0.026cm](0.84,1.77125)(0.26,1.41125)
\psline[linewidth=0.026cm](0.24,0.77125)(0.8,0.41125)
\psline[linewidth=0.026cm](1.42,0.77125)(0.84,0.39125)

\psdots[dotsize=0.16](4.26,0.49125)
\psdots[dotsize=0.16](5.32,0.47125)
\psdots[dotsize=0.16](4.78,1.71125)
\psdots[dotsize=0.16](5.46,1.25125)
\psdots[dotsize=0.16](4.12,1.27125)
\psline[linewidth=0.026cm](4.26,0.51125)(4.1,1.29125)
\psline[linewidth=0.026cm](4.28,0.49125)(5.38,0.49125)
\psline[linewidth=0.026cm](5.46,1.27125)(5.32,0.49125)
\psline[linewidth=0.026cm](4.76,1.73125)(5.46,1.27125)
\psline[linewidth=0.026cm](4.78,1.73125)(4.14,1.31125)
\psdots[dotsize=0.16](4.8,1.37125)
\psdots[dotsize=0.16](5.1,1.15125)
\psdots[dotsize=0.16](5.02,0.81125)
\psdots[dotsize=0.16](4.6,0.81125)
\psdots[dotsize=0.16](4.52,1.17125)
\psline[linewidth=0.026cm](5.02,0.81125)(5.32,0.47125)
\psline[linewidth=0.026cm](4.62,0.83125)(4.26,0.51125)
\psline[linewidth=0.026cm](4.12,1.27125)(4.5,1.17125)
\psline[linewidth=0.026cm](4.5,1.17125)(5.02,0.81125)
\psline[linewidth=0.026cm](4.54,1.17125)(5.06,1.15125)
\psline[linewidth=0.026cm](5.1,1.17125)(4.62,0.83125)
\psline[linewidth=0.026cm](4.8,1.39125)(4.62,0.83125)
\psline[linewidth=0.026cm](4.78,1.41125)(5.0,0.83125)

\psdots[dotsize=0.16](6.3,-1.44875)
\psdots[dotsize=0.16](7.34,-1.44875)
\psdots[dotsize=0.16](6.82,-0.22875)
\psdots[dotsize=0.16](7.48,-0.66875)
\psdots[dotsize=0.16](6.16,-0.66875)
\psline[linewidth=0.032cm](6.3,-1.42875)(6.14,-0.64875)
\psline[linewidth=0.032cm](6.3,-1.42875)(7.4,-1.42875)
\psline[linewidth=0.032cm](7.48,-0.64875)(7.34,-1.42875)
\psline[linewidth=0.032cm](6.8,-0.20875)(7.5,-0.66875)
\psline[linewidth=0.032cm](6.82,-0.20875)(6.18,-0.62875)

\psdots[dotsize=0.16](8.26,0.47125)
\psdots[dotsize=0.16](9.3,0.47125)
\psdots[dotsize=0.16](8.78,1.69125)
\psdots[dotsize=0.16](9.44,1.25125)
\psdots[dotsize=0.16](8.12,1.25125)
\psline[linewidth=0.026cm](8.26,0.49125)(8.1,1.27125)
\psline[linewidth=0.026cm](8.26,0.49125)(9.36,0.49125)
\psline[linewidth=0.026cm](9.44,1.27125)(9.3,0.49125)
\psline[linewidth=0.026cm](8.76,1.71125)(9.46,1.25125)
\psline[linewidth=0.026cm](8.78,1.71125)(8.14,1.29125)

\psdots[dotsize=0.16](6.7,-0.68875)
\psdots[dotsize=0.16](6.94,-1.10875)
\psdots[dotsize=0.16](8.62,0.81125)
\psdots[dotsize=0.16](8.76,1.23125)
\psdots[dotsize=0.16](9.08,0.99125)
\psline[linewidth=0.032cm](6.84,-0.20875)(6.7,-0.66875)
\psline[linewidth=0.032cm](6.68,-0.68875)(6.92,-1.08875)
\psline[linewidth=0.032cm](6.92,-1.10875)(7.36,-1.44875)
\psline[linewidth=0.032cm](8.78,1.73125)(8.76,1.21125)
\psline[linewidth=0.032cm](8.6,0.81125)(8.22,0.47125)
\psline[linewidth=0.032cm](8.76,1.25125)(8.64,0.87125)
\psline[linewidth=0.032cm](9.08,1.01125)(8.64,0.83125)
\psline[linewidth=0.032cm](9.4,1.25125)(9.08,0.99125)

\psdots[dotsize=0.16](0.34,-1.48875)
\psdots[dotsize=0.16](1.38,-1.48875)
\psdots[dotsize=0.16](0.86,-0.26875)
\psdots[dotsize=0.16](1.52,-0.70875)
\psdots[dotsize=0.16](0.2,-0.70875)
\psline[linewidth=0.026cm](0.34,-1.46875)(0.18,-0.68875)
\psline[linewidth=0.026cm](0.34,-1.46875)(1.44,-1.46875)
\psline[linewidth=0.026cm](1.52,-0.68875)(1.38,-1.46875)
\psline[linewidth=0.026cm](0.84,-0.24875)(1.54,-0.70875)
\psline[linewidth=0.026cm](0.86,-0.24875)(0.22,-0.66875)
\psdots[dotsize=0.16](0.7,-1.14875)
\psdots[dotsize=0.16](0.84,-0.72875)
\psdots[dotsize=0.16](1.16,-0.96875)
\psline[linewidth=0.026cm](0.86,-0.22875)(0.84,-0.74875)
\psline[linewidth=0.026cm](0.68,-1.14875)(0.3,-1.48875)
\psline[linewidth=0.026cm](0.84,-0.70875)(0.72,-1.08875)
\psline[linewidth=0.026cm](1.16,-0.94875)(0.72,-1.12875)
\psline[linewidth=0.026cm](1.48,-0.70875)(1.16,-0.96875)
\psdots[dotsize=0.16](1.04,-1.22875)
\psline[linewidth=0.026cm](1.02,-1.20875)(1.42,-1.48875)
\psline[linewidth=0.026cm](0.82,-0.70875)(1.02,-1.22875)

\psdots[dotsize=0.16](2.26,-1.48875)
\psdots[dotsize=0.16](3.3,-1.48875)
\psdots[dotsize=0.16](2.78,-0.26875)
\psdots[dotsize=0.16](3.44,-0.70875)
\psdots[dotsize=0.16](2.12,-0.70875)
\psline[linewidth=0.026cm](2.26,-1.46875)(2.1,-0.68875)
\psline[linewidth=0.026cm](2.26,-1.46875)(3.36,-1.46875)
\psline[linewidth=0.026cm](3.44,-0.68875)(3.3,-1.46875)
\psline[linewidth=0.026cm](2.76,-0.24875)(3.46,-0.70875)
\psline[linewidth=0.026cm](2.78,-0.24875)(2.14,-0.66875)
\psdots[dotsize=0.16](2.8,-1.12875)
\psdots[dotsize=0.16](2.68,-0.70875)
\psline[linewidth=0.026cm](2.78,-0.24875)(2.68,-0.68875)
\psline[linewidth=0.026cm](2.66,-0.66875)(2.78,-1.12875)
\psline[linewidth=0.026cm](2.66,-0.70875)(3.4,-0.70875)
\psline[linewidth=0.026cm](2.82,-1.08875)(3.4,-0.70875)
\psline[linewidth=0.026cm](2.8,-1.12875)(2.24,-1.46875)
\psline[linewidth=0.026cm](2.78,-1.12875)(3.32,-1.48875)

\psdots[dotsize=0.16](4.28,-1.46875)
\psdots[dotsize=0.16](5.32,-1.46875)
\psdots[dotsize=0.16](4.8,-0.24875)
\psdots[dotsize=0.16](5.46,-0.68875)
\psdots[dotsize=0.16](4.14,-0.68875)
\psline[linewidth=0.026cm](4.28,-1.44875)(4.12,-0.66875)
\psline[linewidth=0.026cm](4.28,-1.44875)(5.38,-1.44875)
\psline[linewidth=0.026cm](5.46,-0.66875)(5.32,-1.44875)
\psline[linewidth=0.026cm](4.78,-0.22875)(5.48,-0.68875)
\psline[linewidth=0.026cm](4.8,-0.22875)(4.16,-0.64875)
\psdots[dotsize=0.16](4.62,-1.10875)
\psdots[dotsize=0.16](4.9,-0.78875)
\psline[linewidth=0.026cm](5.42,-0.68875)(4.92,-0.76875)
\psline[linewidth=0.026cm](4.8,-0.22875)(4.88,-0.78875)

\psdots[dotsize=0.16](8.26,-1.46875)
\psdots[dotsize=0.16](9.3,-1.46875)
\psdots[dotsize=0.16](8.78,-0.24875)
\psdots[dotsize=0.16](9.44,-0.68875)
\psdots[dotsize=0.16](8.12,-0.68875)
\psline[linewidth=0.032cm](8.26,-1.44875)(8.1,-0.66875)
\psline[linewidth=0.032cm](8.26,-1.44875)(9.36,-1.44875)
\psline[linewidth=0.032cm](9.44,-0.66875)(9.3,-1.44875)
\psline[linewidth=0.032cm](8.76,-0.22875)(9.46,-0.68875)
\psline[linewidth=0.032cm](8.78,-0.22875)(8.14,-0.64875)

\psdots[dotsize=0.16](8.78,-0.72875)
\psdots[dotsize=0.16](8.96,-1.06875)
\psline[linewidth=0.032cm](8.78,-0.24875)(8.78,-0.68875)
\psline[linewidth=0.032cm](8.76,-0.70875)(8.96,-1.04875)
\psline[linewidth=0.032cm](8.96,-1.06875)(9.3,-1.44875)
\psline[linewidth=0.032cm](8.94,-1.04875)(9.44,-0.68875)
\psline[linewidth=0.032cm](8.76,-0.72875)(9.4,-0.68875)
\psdots[dotsize=0.16](9.8,-1.16875)
\psline[linewidth=0.032cm](9.42,-0.68875)(9.82,-1.18875)
\psline[linewidth=0.032cm](8.72,-0.70875)(9.8,-1.16875)
\psline[linewidth=0.032cm](8.92,-1.04875)(9.82,-1.18875)
\psline[linewidth=0.032cm](9.28,-1.44875)(9.82,-1.16875)
\psline[linewidth=0.032cm](8.26,-1.44875)(9.84,-1.16875)

\psline[linewidth=0.032cm](4.6,-1.10875)(4.28,-1.44875)
\psline[linewidth=0.032cm](4.9,-0.76875)(4.64,-1.06875)
\psdots[dotsize=0.16](6.34,0.47125)
\psdots[dotsize=0.16](7.38,0.47125)
\psdots[dotsize=0.16](6.86,1.69125)
\psdots[dotsize=0.16](7.52,1.25125)
\psdots[dotsize=0.16](6.2,1.25125)
\psline[linewidth=0.032cm](6.34,0.49125)(6.18,1.27125)
\psline[linewidth=0.032cm](6.34,0.49125)(7.44,0.49125)
\psline[linewidth=0.032cm](7.52,1.27125)(7.38,0.49125)
\psline[linewidth=0.032cm](6.84,1.71125)(7.54,1.25125)
\psline[linewidth=0.032cm](6.86,1.71125)(6.22,1.29125)
\psdots[dotsize=0.16](7.0,1.25125)
\psdots[dotsize=0.16](7.08,0.87125)
\psdots[dotsize=0.16](6.62,1.13125)
\psdots[dotsize=0.16](6.72,0.79125)
\psline[linewidth=0.032cm](6.84,1.73125)(7.0,1.29125)
\psline[linewidth=0.032cm](6.86,1.71125)(6.64,1.17125)
\psline[linewidth=0.032cm](6.16,1.27125)(6.6,1.13125)
\psline[linewidth=0.032cm](6.34,0.49125)(6.62,1.15125)
\psline[linewidth=0.032cm](6.34,0.51125)(6.7,0.79125)
\psline[linewidth=0.032cm](6.7,0.81125)(7.42,0.47125)
\psline[linewidth=0.032cm](7.06,0.89125)(7.38,0.47125)
\psline[linewidth=0.032cm](7.08,0.89125)(6.76,0.79125)
\psline[linewidth=0.032cm](7.0,1.27125)(7.08,0.89125)
\psline[linewidth=0.032cm](7.02,1.27125)(6.6,1.13125)
\psline[linewidth=0.032cm](6.98,1.25125)(7.48,1.25125)
\psline[linewidth=0.032cm](7.1,0.89125)(7.5,1.25125)
\psline[linewidth=0.032cm](6.2,1.25125)(6.72,0.79125)

\pscircle[linewidth=0.024,dimen=outer](1.5,3.57125){0.18}
\pscircle[linewidth=0.024,dimen=outer](2.82,4.01125){0.18}
\pscircle[linewidth=0.024,dimen=outer](0.84,4.01125){0.18}
\pscircle[linewidth=0.024,dimen=outer](0.17,3.57125){0.18}
\pscircle[linewidth=0.024,dimen=outer](3.46,3.58125){0.18}
\pscircle[linewidth=0.024,dimen=outer](5.45,3.581){0.18}
\pscircle[linewidth=0.024,dimen=outer](6.78,4.01125){0.18}
\pscircle[linewidth=0.024,dimen=outer](7.43,3.57125){0.18}
\pscircle[linewidth=0.024,dimen=outer](9.45,3.57125){0.18}

\usefont{T1}{ptm}{m}{it}
\rput(0.8720312,2.56625){\scriptsize $G_1$}
\usefont{T1}{ptm}{m}{it}
\rput(2.8720313,2.56625){\scriptsize $G_2$}
\usefont{T1}{ptm}{m}{it}
\rput(4.8220313,2.56625){\scriptsize $G_3$}
\usefont{T1}{ptm}{m}{it}
\rput(6.8720313,2.56625){\scriptsize $G_4$}
\usefont{T1}{ptm}{m}{it}
\rput(8.862031,2.56625){\scriptsize $G_5$}
\usefont{T1}{ptm}{m}{it}
\rput(0.8720312,0.15625){\scriptsize $G_6$}
\usefont{T1}{ptm}{m}{it}
\rput(2.8220313,0.23625){\scriptsize $G_7$}
\usefont{T1}{ptm}{m}{it}
\rput(4.8220313,0.23625){\scriptsize $G_8$}
\usefont{T1}{ptm}{m}{it}
\rput(6.8720313,0.23625){\scriptsize $G_9$}
\usefont{T1}{ptm}{m}{it}
\rput(8.842031,0.23625){\scriptsize $G_{10}$}
\usefont{T1}{ptm}{m}{it}
\rput(0.87203125,-1.76375){\scriptsize $G_{11}$}
\usefont{T1}{ptm}{m}{it}
\rput(2.8120314,-1.76375){\scriptsize $G_{12}$}
\usefont{T1}{ptm}{m}{it}
\rput(4.8220313,-1.76375){\scriptsize $G_{13}$}
\usefont{T1}{ptm}{m}{it}
\rput(6.8720313,-1.76375){\scriptsize $G_{14}$}
\usefont{T1}{ptm}{m}{it}
\rput(8.842031,-1.76375){\scriptsize $G_{15}$}

\psdots[dotsize=0.16](2.62,-3.46875)
\psdots[dotsize=0.16](3.66,-3.46875)
\psdots[dotsize=0.16](3.14,-2.24875)
\psdots[dotsize=0.16](3.8,-2.68875)
\psdots[dotsize=0.16](2.48,-2.68875)
\psline[linewidth=0.026cm](2.62,-3.44875)(2.46,-2.66875)
\psline[linewidth=0.026cm](3.8,-2.66875)(3.66,-3.44875)
\psline[linewidth=0.026cm](3.12,-2.22875)(3.82,-2.68875)
\psline[linewidth=0.026cm](3.14,-2.22875)(2.5,-2.64875)
\psdots[dotsize=0.16](3.14,-2.72875)
\psdots[dotsize=0.16](3.32,-3.06875)
\psline[linewidth=0.026cm](3.14,-2.24875)(3.14,-2.68875)
\psline[linewidth=0.026cm](3.12,-2.70875)(3.32,-3.04875)
\psline[linewidth=0.026cm](3.32,-3.06875)(3.66,-3.44875)
\psline[linewidth=0.026cm](3.3,-3.04875)(3.8,-2.68875)
\psline[linewidth=0.026cm](3.12,-2.72875)(3.76,-2.68875)
\psdots[dotsize=0.16](4.18,-3.16875)
\psline[linewidth=0.026cm](3.08,-2.70875)(4.16,-3.16875)
\psline[linewidth=0.026cm](3.28,-3.04875)(4.18,-3.18875)
\psline[linewidth=0.026cm](3.64,-3.44875)(4.18,-3.16875)
\psline[linewidth=0.026cm](2.56,-3.44875)(4.2,-3.16875)
\psdots[dotsize=0.16](3.18,-3.74875)

\psline[linewidth=0.026cm](2.44,-2.62875)(3.2,-3.76875)
\psline[linewidth=0.026cm](2.58,-3.44875)(3.18,-3.74875)
\psline[linewidth=0.026cm](3.32,-3.02875)(3.18,-3.74875)
\psline[linewidth=0.026cm](3.66,-3.44875)(3.2,-3.72875)
\psbezier[linewidth=0.032](3.18,-3.76875)(3.72,-3.70875)(3.8,-3.62875)(4.18,-3.16875)
\psdots[dotsize=0.16](5.8,-3.64875)
\psdots[dotsize=0.16](6.84,-3.64875)
\psdots[dotsize=0.16](6.32,-2.42875)
\psdots[dotsize=0.16](6.98,-2.86875)
\psdots[dotsize=0.16](5.66,-2.86875)
\psline[linewidth=0.026cm](5.8,-3.62875)(5.64,-2.84875)
\psline[linewidth=0.026cm](6.98,-2.84875)(6.84,-3.62875)
\psline[linewidth=0.026cm](6.3,-2.40875)(7.0,-2.86875)
\psline[linewidth=0.026cm](6.32,-2.40875)(5.68,-2.82875)
\psdots[dotsize=0.16](6.32,-2.90875)
\psdots[dotsize=0.16](6.5,-3.24875)
\psline[linewidth=0.026cm](6.32,-2.42875)(6.32,-2.86875)
\psline[linewidth=0.026cm](6.3,-2.88875)(6.5,-3.22875)
\psline[linewidth=0.026cm](6.5,-3.24875)(6.84,-3.62875)
\psline[linewidth=0.026cm](6.48,-3.22875)(6.98,-2.86875)
\psline[linewidth=0.026cm](6.3,-2.90875)(6.94,-2.86875)
\psdots[dotsize=0.16](7.34,-3.34875)
\psline[linewidth=0.026cm](6.26,-2.88875)(7.34,-3.34875)
\psline[linewidth=0.026cm](6.82,-3.62875)(7.36,-3.34875)
\psline[linewidth=0.026cm](5.8,-3.62875)(7.38,-3.34875)
\psdots[dotsize=0.16](7.06,-2.30875)
\psline[linewidth=0.026cm](6.48,-3.22875)(7.06,-2.28875)
\psline[linewidth=0.026cm](6.96,-2.84875)(7.06,-2.26875)
\psline[linewidth=0.026cm](6.48,-3.22875)(7.38,-3.36875)
\psline[linewidth=0.026cm](6.3,-2.90875)(7.06,-2.28875)
\psline[linewidth=0.026cm](5.66,-2.86875)(7.06,-2.28875)
\psline[linewidth=0.026cm](6.32,-2.42875)(7.06,-2.28875)

\usefont{T1}{ptm}{m}{it}
\rput(3.2520313,-4.05375){\scriptsize $G_{16}$}
\usefont{T1}{ptm}{m}{n}
\rput(6.3620313,-4.00375){\scriptsize $G_{17}$}

\psline[linewidth=0.026cm](2.62,-3.46875)(3.68,-3.46875)
\psline[linewidth=0.026cm](5.74,-3.64875)(6.9,-3.64875)
\psline[linewidth=0.026cm](4.78,1.73125)(4.78,1.39125)
\psline[linewidth=0.026cm](5.08,1.15125)(5.44,1.25125)
\usefont{T1}{ptm}{m}{it}
\rput(2.6023438,3.43625){\scriptsize $a_1$}
\usefont{T1}{ptm}{m}{it}
\rput(0.64234376,3.41625){\scriptsize $a_1$}
\usefont{T1}{ptm}{m}{it}
\rput(5.0723436,3.21625){\scriptsize $a_2$}
\usefont{T1}{ptm}{m}{it}
\rput(2.9423437,3.10625){\scriptsize $a_2$}
\usefont{T1}{ptm}{m}{it}
\rput(6.581875,3.35625){\scriptsize $b_1$}
\usefont{T1}{ptm}{m}{it}
\rput(8.821875,3.07625){\scriptsize $b_2$}
\usefont{T1}{ptm}{m}{it}
\rput(8.581875,3.51625){\scriptsize $b_1$}

\pscircle[linewidth=0.024,dimen=outer](2.77,1.71125){0.18}
\pscircle[linewidth=0.024,dimen=outer](3.43,1.27125){0.18}
\pscircle[linewidth=0.024,dimen=outer](3.29,0.50125){0.18}
\pscircle[linewidth=0.024,dimen=outer](2.26,0.50125){0.18}
\pscircle[linewidth=0.024,dimen=outer](2.12,1.27125){0.18}

\pscircle[linewidth=0.024,dimen=outer](3.42,-0.70875){0.18}
\pscircle[linewidth=0.024,dimen=outer](2.79,-1.12875){0.18}
\pscircle[linewidth=0.024,dimen=outer](4.28,-1.47){0.18}
\pscircle[linewidth=0.024,dimen=outer](7.47,-0.67){0.18}
\pscircle[linewidth=0.024,dimen=outer](9.43,-0.68875){0.18}
\pscircle[linewidth=0.024,dimen=outer](9.77,-1.15875){0.18}
\pscircle[linewidth=0.024,dimen=outer](3.78,-2.68875){0.18}
\pscircle[linewidth=0.024,dimen=outer](4.16,-3.16875){0.18}
\pscircle[linewidth=0.024,dimen=outer](3.18,-3.72875){0.18}
\pscircle[linewidth=0.024,dimen=outer](7.05,-2.29875){0.18}
\pscircle[linewidth=0.024,dimen=outer](6.98,-2.86875){0.18}
\pscircle[linewidth=0.024,dimen=outer](7.31,-3.33875){0.18}
\psline[linewidth=0.026cm](6.96,-2.86875)(7.32,-3.34875)
\psline[linewidth=0.026cm](3.78,-2.68875)(4.14,-3.14875)

\usefont{T1}{ptm}{m}{it}
\rput(0.80796876,2.23625){\scriptsize $S=\{a_1\}$}
\usefont{T1}{ptm}{m}{it}
\rput(2.7779687,2.23625){\scriptsize $S=\{a_1,a_2\}$}
\usefont{T1}{ptm}{m}{it}
\rput(4.807969,2.21625){\scriptsize $S=\{a_2\}$}
\usefont{T1}{ptm}{m}{it}
\rput(8.767969,2.23625){\scriptsize $S=\{b_1,b_2\}$}
\usefont{T1}{ptm}{m}{n}
\usefont{T1}{ptm}{m}{it}
\rput(6.767969,2.23625){\scriptsize $S=\{b_1\}$}
\psline[linewidth=0.026cm](4.6,-1.10875)(4.28,-1.44875)
\psline[linewidth=0.026cm](4.9,-0.76875)(4.64,-1.06875)
\usefont{T1}{ptm}{m}{it}
\rput(7.0642185,1.79625){\tiny p}
\usefont{T1}{ptm}{m}{it}
\rput(7.0898438,1.39625){\tiny q}
\usefont{T1}{ptm}{m}{it}
\rput(7.662344,1.33625){\tiny r}
\usefont{T1}{ptm}{m}{it}
\rput(7.2109377,0.85625){\tiny s}
\usefont{T1}{ptm}{m}{it}
\rput(7.528281,0.51625){\tiny t}
\usefont{T1}{ptm}{m}{it}
\rput(6.7684374,0.62625){\tiny u}
\usefont{T1}{ptm}{m}{it}
\rput(6.18625,0.47625){\tiny v}
\usefont{T1}{ptm}{m}{it}
\rput(6.1909375,1.43625){\tiny w}
\usefont{T1}{ptm}{m}{it}
\rput(6.572031,1.29625){\tiny x}
\end{pspicture}
}
\caption{Basic graphs used in Theorem \ref{p2free} ($S=\emptyset$ for $G_i$, $6\leq i\leq17$)}
\label{basic}
\end{figure}

\begin{thm}(\cite{fou})\label{p5c4}
Let $G$ be a connected $\{P_5,C_4\}$-free graph.
Then $V(G)=V_1 \cup V_2$ such that\\
(i) $\langle V_1\rangle$ is a $P_5$-free graph which is also chordal. \\
(ii) If $V_2 \neq \emptyset $, then $\langle V_2\rangle=A_1\cup A_2\cup\dots\cup A_l$ where each $A_i$ is a $\mathbb{K}[C_5]$, for every $i\in\{1,2,\dots, l\}$ for some $l\geq1$. Also, $\langle N(A_i)\rangle$ is a complete subgraph of $V_1$ and $[A_i, N(A_i)]$ is complete.
\end{thm}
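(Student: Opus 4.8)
The plan is to take $V_2$ to be the set of vertices that lie in some induced $C_5$ of $G$ and $V_1=V(G)\setminus V_2$. If $G$ has no induced $C_5$ at all then, being also $\{P_5,C_4\}$-free, it can contain no induced $C_k$ with $k\geq 6$ (any such cycle has an induced $P_5$) and no induced $C_4$, so $G$ is chordal and we are done with $V_2=\emptyset$. Hence I would assume $G$ contains an induced $C_5$, say $C=\langle\{c_0,c_1,c_2,c_3,c_4\}\rangle$, with indices read modulo $5$ throughout.

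The engine of the whole argument is a neighbourhood classification, proved exactly in the spirit of Claim~1 of Theorem~\ref{p5k4kb}: for any $v\notin V(C)$, either $N(v)\cap V(C)=\emptyset$, or $N(v)\cap V(C)=\{c_{i-1},c_i,c_{i+1}\}$ for some $i$ (call $v$ a $c_i$-clone), or $N(v)\cap V(C)=V(C)$ (call $v$ a dominator). Every other intersection type is excluded by a short forbidden-subgraph check: a single neighbour or two consecutive neighbours give an induced $P_5$, while two non-consecutive neighbours, three neighbours inducing $K_2\cup K_1$, or four neighbours give an induced $C_4$. A useful by-product is that each $c_i$-clone $v$ lies on the induced $C_5$ $\langle\{v,c_{i-1},c_{i-2},c_{i+2},c_{i+1}\}\rangle$, so clones belong to $V_2$, whereas a dominator together with $C$ induces $K_1+C_5$, which contains no induced $C_5$ through the dominator, making dominators natural candidates for $V_1$.

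Next I would assemble the blob around $C$. Setting $Q_i=\{c_i\}\cup\{c_i\text{-clones}\}$, I would show $\langle Q_0\cup\cdots\cup Q_4\rangle\cong\mathbb{K}[C_5]$ by three checks: $Q_i$ is a clique (two non-adjacent members $u,u'$ give the induced $C_4$ $\langle\{u,c_{i-1},u',c_{i+1}\}\rangle$); $[Q_i,Q_{i+1}]$ is complete (a missing edge $uw$ with $u\in Q_i,\,w\in Q_{i+1}$ gives the induced $P_5$ $\langle\{w,c_{i+2},c_{i+3},c_{i-1},u\}\rangle$); and $[Q_i,Q_{i+2}]=\emptyset$ (an edge $uz$ with $u\in Q_i,\,z\in Q_{i+2}$ gives the induced $C_4$ $\langle\{u,c_{i-1},c_{i+3},z\}\rangle$). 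Writing $A=\langle Q_0\cup\cdots\cup Q_4\rangle$, I would then obtain property (c): a vertex of $V_1$ adjacent to $A$ is adjacent to some transversal $C_5$ of $A$, so by the classification it is a clone or a dominator; it cannot be a clone (clones lie in $V_2$), and a dominator of one transversal dominates all of $A$ by re-applying the four-neighbour exclusion, so $[A,N(A)]$ is complete. Two non-adjacent dominators $v,v'$ give the induced $C_4$ $\langle\{v,c_0,v',c_2\}\rangle$, whence $N(A)$ is a clique; being $C_5$-free, it lies in $V_1$.

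The hard part will be the global consistency step: showing that the component of $\langle V_2\rangle$ containing $C$ is exactly $A$ and not something larger, and that distinct blobs are anticomplete. For this I would take any $w\in V_2$ adjacent to a vertex $a\in Q_j$, extend $a$ to a transversal induced $C_5$ of $A$, and apply the classification to place $w$ into a single class $Q_i$; the delicate point is proving that this position $i$ is forced independently of the chosen transversal, so that the classes $Q_0,\dots,Q_4$ are genuinely closed and every induced $C_5$ meeting $A$ is aligned with it. Once this propagation is established, the components of $\langle V_2\rangle$ are precisely the $\mathbb{K}[C_5]$'s $A_1,\dots,A_l$, and property (b) follows: any induced cycle of length $\geq 4$ inside $V_1$ would be a $C_4$ (forbidden), a $C_5$ (its vertices lie in $V_2$, a contradiction), or a $C_{\geq 6}$ (containing an induced $P_5$), so $\langle V_1\rangle$ is chordal and $P_5$-free.
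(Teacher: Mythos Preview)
The paper does not contain a proof of this theorem: it is quoted from \cite{fou} and used as a black box, so there is no in-paper argument to compare against. Your outline is on the right track and the neighbourhood classification, the assembly of the blob $A=\bigcup Q_i$ as a $\mathbb{K}[C_5]$, and the chordality argument for $\langle V_1\rangle$ are all correct. There is, however, one genuine gap.

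You define $V_2$ to be the set of vertices lying on some induced $C_5$ in $G$, and you need $N(A)\subseteq V_1$. Your sentence ``whence $N(A)$ is a clique; being $C_5$-free, it lies in $V_1$'' conflates two different statements: that the clique $N(A)$ contains no induced $C_5$ (trivially true) and that no vertex of $N(A)$ lies on an induced $C_5$ anywhere in $G$ (which is what $N(A)\subseteq V_1$ actually means). These are not the same, and the latter is exactly what your ``global consistency'' paragraph needs but does not supply: you propose to place any $w\in V_2$ adjacent to $A$ into some $Q_i$, yet your own classification allows $w$ to be a dominator, and you never rule this out. Without this, you cannot conclude that the component of $\langle V_2\rangle$ containing $C$ equals $A$.

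The gap is fillable. Suppose a dominator $d$ of $A$ lies on an induced $C_5$, say $C'=\langle\{d,x_1,x_2,x_3,x_4\}\rangle$. First show $x_2,x_3$ have no neighbour in $C$: if $x_2$ dominated $C$ then $\langle\{x_2,c_0,d,c_2\}\rangle\cong C_4$, and if $x_2$ were a clone then $x_2\in A$ and $d$ would be adjacent to $x_2$, contrary to $C'$ being induced. Next show $x_1$ (and symmetrically $x_4$) must dominate $C$: if $x_1$ were a clone then $x_2$ would be adjacent to $A$, already excluded; if $x_1$ were disjoint from $C$ then $\langle\{x_3,x_2,x_1,d,c_0\}\rangle\cong P_5$. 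But now $x_1,x_4$ both dominate $C$ while $x_1x_4\notin E(G)$, so $\langle\{x_1,c_0,x_4,c_2\}\rangle\cong C_4$, a contradiction. Hence no dominator of $A$ lies on an induced $C_5$, giving $N(A)\subseteq V_1$ and closing the argument.
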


\begin{thm}(\cite{ara})\label{splitexp}
If $G$ is a connected $\{P_5, (\overline{P_2\cup P_3}),\overline{P_5},Dart\}$-free graph that contains an induced $C_5$, then $G$ is either isomorphic to $C_5(S_1,S_2,S_3,S_4,S_5)$ or $C_5(S_1,S_2,S_3,S_4,S_5)+H$, where $S_i's$ are induced split subgraphs of $G$, $H$ is nonempty and $H\sqsubseteq G$.
\end{thm}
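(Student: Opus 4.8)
The plan is to adapt the neighbourhood-layering technique already used for Theorems \ref{p5k4kb} and \ref{p6c5}. I would fix an induced $C_5\cong\langle\{v_0,v_1,v_2,v_3,v_4\}\rangle=\langle N_0\rangle$, set $N_i=\{x\in V(G):\mathrm{dist}(x,N_0)=i\}$, and read all cycle indices modulo $5$. The crucial first step is to classify, for each $x\in N_1$, the graph $\langle N(x)\cap N_0\rangle$. Using $P_5$-freeness one discards a single vertex and a $K_2$ (each makes $x-v_j-v_{j+1}-v_{j+2}-v_{j+3}$, resp. $x-v_j-v_{j+4}-v_{j+3}-v_{j+2}$, an induced $P_5$), and using $\overline{P_5}$-freeness one discards every neighbourhood isomorphic to $K_1\cup K_2$, to $P_4$, or to any set of four cycle vertices: in each of these cases the relevant five vertices induce a $C_5$ carrying exactly one chord, that is, an $\overline{P_5}$. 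What survives is precisely three patterns, $\{v_{i-1},v_{i+1}\}$, $\{v_{i-1},v_i,v_{i+1}\}$, and the whole cycle $N_0$. Accordingly I set $S_i=\{x\in N_1:N(x)\cap N_0\in\{\{v_{i-1},v_{i+1}\},\{v_{i-1},v_i,v_{i+1}\}\}\}\cup\{v_i\}$ for $0\le i\le 4$, and $H=\{x\in N_1:N(x)\cap N_0=N_0\}$.

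The second step builds the expansion skeleton on $S_0,\dots,S_4$. I would show $[S_i,S_{i+1}]$ is complete: if $x\in S_i$ and $y\in S_{i+1}$ were non-adjacent, then $\langle\{x,v_{i-1},v_{i+3},v_{i+2},y\}\rangle\cong P_5$ (routing $x-v_{i-1}-v_{i+3}-v_{i+2}-y$ around the far side of the cycle). Symmetrically $[S_i,S_{i+2}]=\emptyset$: an edge $xy$ with $x\in S_i$, $y\in S_{i+2}$ makes $\langle\{v_{i-1},x,v_{i+1},y,v_{i+3}\}\rangle$ a $C_5$ with the single chord $xy$, again an $\overline{P_5}$. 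Next I would prove $N_2=\emptyset$: any $w\in N_2$ has a neighbour $u\in N_1$; if $u\in S_i$ then $\langle\{w,u,v_{i+1},v_{i+2},v_{i+3}\}\rangle\cong P_5$, while if $u\in H$ then $\langle\{w,u,v_0,v_1,v_2\}\rangle\cong$ Dart (the dominating hub $u$, the path $v_0v_1v_2$, and the pendant $w$). Hence $V(G)=N_0\cup N_1=\bigl(\bigcup_{i=0}^4 S_i\bigr)\cup H$.

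The third step is the split property of the bags and the join of $H$. Since every vertex of $S_i$ is adjacent to both $v_{i-1}$ and $v_{i+1}$, an induced $P_3\cong a-b-c$ inside $S_i$ would give $\langle\{v_{i+1},a,b,c,v_{i+2}\}\rangle\cong$ Dart, and an induced $2K_2$ inside $S_i$ would give, through the non-adjacent pair $v_{i-1},v_{i+1}$ dominating the three endpoints of the two edges, a copy of $(\overline{P_2\cup P_3})$; thus each $\langle S_i\rangle$ is $\{P_3,2K_2\}$-free, hence a clique together with isolated vertices, which is in particular a split graph. Finally, for $h\in H$ and $x\in S_i$ I would show $hx\in E(G)$: otherwise $\langle\{x,v_{i+1},v_{i+2},h,v_{i-1}\}\rangle$ is the $C_5$ on $x,v_{i+1},v_{i+2},h,v_{i-1}$ with the lone chord $hv_{i+1}$, i.e. an $\overline{P_5}$. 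Therefore $H$ is complete to $\bigcup_i S_i$, which yields $G\cong C_5(S_0,\dots,S_4)$ when $H=\emptyset$ and $G\cong C_5(S_0,\dots,S_4)+\langle H\rangle$ with $H\neq\emptyset$ otherwise, matching the asserted form.

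I expect the main obstacle to be the first step: the exhaustive analysis of $\langle N(x)\cap N_0\rangle$ must invoke all four forbidden graphs at once, and the borderline patterns ($P_3$ versus $K_1\cup K_2$, and the ``almost dominating'' neighbourhoods that miss a single cycle vertex) are exactly the ones separating a genuine bag vertex from a configuration forcing an $\overline{P_5}$ or a Dart; confirming in particular that no vertex is adjacent to precisely four of the five cycle vertices is where the care lies. A secondary subtlety is the last claim, namely that a vertex adjacent to all of $N_0$ is forced to be adjacent to every bag vertex, so that $H$ splits off as a clean join rather than attaching only partially.
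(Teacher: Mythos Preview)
The paper does not prove this theorem; it is quoted from \cite{ara} and used as a black box. The only thing the paper adds is the one-line observation, immediately following the statement, that $\langle H\rangle$ is a clique (via $(\overline{P_2\cup P_3})$-freeness applied to two non-adjacent vertices of $H$ together with three consecutive cycle vertices).

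Your proposal is therefore not a reconstruction of the paper's argument but a self-contained proof, and it is essentially correct. The neighbourhood classification in the first step is right: after $K_1$ and $K_2$ are eliminated by $P_5$-freeness and $K_1\cup K_2$ and $P_4$ by $\overline{P_5}$-freeness (for the latter, the set $\{x,v_0,v_2,v_3,v_4\}$ when $N(x)\cap N_0=\{v_0,v_1,v_2,v_3\}$ gives the $C_5$ $x\,v_0\,v_4\,v_3\,v_2$ with the single chord $xv_3$), exactly $2K_1$, $P_3$ and $C_5$ survive. Your phrase ``$P_4$, or to any set of four cycle vertices'' is redundant, since any four vertices of $C_5$ already induce a $P_4$. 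The expansion skeleton ($[S_i,S_{i+1}]$ complete, $[S_i,S_{i+2}]=\emptyset$), the emptiness of $N_2$, and the join of $H$ all check out exactly as you outline.

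In the split step your $2K_2$ argument is slightly garbled (``three endpoints of the two edges''), but the intended picture is correct: with $ab$ one edge and $c$ a vertex of the other edge, the complement of $\langle\{a,b,c,v_{i-1},v_{i+1}\}\rangle$ is the path $a\text{--}c\text{--}b$ together with the isolated edge $v_{i-1}v_{i+1}$, so these five vertices induce $\overline{P_2\cup P_3}$. Note that this actually forbids the weaker configuration $K_2\cup K_1$ inside $S_i$; combined with your Dart argument for $P_3$, each $\langle S_i\rangle$ is in fact a single clique or an independent set --- stronger than the ``split'' asserted in the theorem, but of course sufficient.
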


We further claim that the subgraph $H$ mentioned in Theorem \ref{splitexp} is complete.
Let $G$ be a connected $\{P_5, (\overline{P_2\cup P_3}),\overline{P_5},Dart\}$-free graph that contains an induced $C_5$. Let $C=\langle \{v_0,v_1,v_2,v_3,v_4\}\rangle\cong C_5\sqsubseteq G$. Suppose there exists two non-adjacent vertices $x$ and $y$ in $H$. Then $\langle \{v_1,v_2,x,y,v_4\}\rangle\cong (\overline{P_2\cup P_3})\sqsubseteq G$, a contradiction.

It can be noted that $\mathbb{K}[C_5]$ is one of the graphs mentioned in Figure \ref{basic} of Theorem \ref{p2free}. Also $\mathbb{K}[C_5]$ is an induced subgraph of the graphs mentioned in Theorem \ref{p5c4} and \ref{splitexp} . So, we shall first consider the indicated coloring for the complete expansion of $C_5$. Even though $\mathbb{K}[C_5]$ looks simple, a technique as done for $\mathbb{K}[C_6]$ (see Proposition \ref{expc6}) doesn't look possible for $\mathbb{K}[C_5]$. Hence we are forced to adopt a laborious  process.

\begin{thm}
\label{comexp} For  $1\leq i\leq 5$, let $m_i$'s be  positive integers. Then the graph $G=$ \linebreak $\mathbb{K}[C_5](m_1,m_2,m_3,m_4,m_5)$ is $k$-indicated colorable for all $k\geq\chi(G)$.
\end{thm}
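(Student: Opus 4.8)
The plan is to fix a maximum clique, colour it first, and then reduce the game to the ``expanded path'' formed by the three remaining classes, so that the whole difficulty concentrates on the single class opposite the chosen clique. Write $V_i=V(K_{m_i})$ and $n=\sum_{i=1}^{5}m_i$, with all indices taken mod $5$. Since every independent set meets each clique $V_i$ in at most one vertex and the classes it meets form an independent set of $C_5$, we have $\alpha(G)=2$; hence every colour class of a proper colouring has at most two vertices, so $\chi(G)\ge\lceil n/2\rceil$, and of course $\chi(G)\ge\omega(G)=\max_i(m_i+m_{i+1})$. Thus from $k\ge\chi(G)$ I may assume both $k\ge\omega(G)$ and $k\ge\lceil n/2\rceil$. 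First I would relabel so that $V_1\cup V_2$ is a maximum clique, i.e. $\omega=m_1+m_2$; then $V_4$ is the unique class adjacent to neither $V_1$ nor $V_2$, its neighbours being $V_3$ and $V_5$ (while $V_3\not\sim V_1$ and $V_5\not\sim V_2$). As in Proposition \ref{expc6}, Ann opens by presenting all of $V_1\cup V_2$; being a clique of size $\omega\le k$, Ben must colour it with $\omega$ distinct colours, producing disjoint sets $C_1,C_2$ on $V_1,V_2$ and leaving a pool $R=\{1,\dots,k\}\setminus(C_1\cup C_2)$ of $k-\omega$ fresh colours.

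The easy regime is $2\omega\ge n$. Here $k\ge\omega\ge n-\omega=m_3+m_4+m_5$, and Ann simply presents all of $V_3$, then all of $V_5$, then all of $V_4$. When $V_3$ is presented, its only coloured neighbours lie in $V_2$, so at most $m_2$ colours are forbidden and, since $k\ge\omega\ge m_2+m_3$, at least $m_3$ colours remain; every vertex of $V_5$ sees only $C_1$, and $k\ge\omega\ge m_5+m_1$ leaves $\ge m_5$ colours; finally the coloured neighbours of $V_4$ lie in $V_3\cup V_5$, carrying at most $m_3+m_5$ colours, while $k\ge m_3+m_4+m_5$ leaves $\ge m_4$ colours. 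So no vertex is ever blocked and Ann wins.

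The remaining case $2\omega<n$ (which forces $\chi=\lceil n/2\rceil$ and makes the fresh pool $|R|=k-\omega$ small) is the crux, and it is where I expect the laborious analysis to live. The naive order above can fail: because $C_3\subseteq C_1\cup R$ and $C_5\subseteq C_2\cup R$ force $C_3\cap C_5\subseteq R$, Ben may colour $V_3$ and $V_5$ with nearly disjoint palettes and so threaten $V_4$, or, dually, drive $V_3,V_5$ onto the clique colours and thereby exhaust the palette of a partner vertex in $V_4$. To protect $V_4$ one needs $|C_3\cup C_5|\le k-m_4$, i.e. Ben must be \emph{forced} to reuse at least $m_3+m_4+m_5-k$ colours between $V_3$ and $V_5$; the neighbour classes $V_3,V_5$ impose two further analogous inequalities once $V_4$ is partly coloured. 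The approach I would pursue is an adaptive, interleaved presentation: Ann colours the vertices of $V_4$ from $C_1\cup C_2$ (legal since $V_4\not\sim V_1,V_2$) and interleaves them with $V_3$ and $V_5$ so as to ``spend'' the scarce shared and fresh colours before Ben can diversify, leaving the partner vertices of each class until last.

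The hard part will be the bookkeeping in this last regime: showing that a single order keeps $V_3$, $V_4$ and $V_5$ \emph{simultaneously} safe against every response of Ben. This seems to require splitting into subcases according to how $m_3,m_4,m_5$ compare with the fresh budget $k-\omega$, and in each subcase verifying the inequality $|C_3\cup C_5|\le k-m_4$ together with its two neighbour-analogues. The delicate point is that exactly when Ben has enough room to separate the palettes of $V_3$ and $V_5$ he also has enough room elsewhere, and exactly when the pool $R$ is too small to separate them Ben is himself constrained; capturing this trade-off precisely — rather than any single clever ordering — is what I expect to make the argument long.
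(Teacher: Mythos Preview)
Your easy regime $2\omega\ge n$ is exactly the paper's first case and is correct. The hard case, however, has a genuine gap. First a slip: you write ``Ann colours the vertices of $V_4$ from $C_1\cup C_2$'', but in indicated colouring Ann only \emph{presents} vertices --- Ben chooses the colours, and nothing stops him from spending fresh colours of $R$ on $V_4$ if Ann offers those vertices early. More structurally, by committing the whole maximum clique $V_1\cup V_2$ at the outset you leave yourself a three-class interleaving problem on the path $V_3$--$V_4$--$V_5$; you correctly diagnose that this is the crux but do not give a strategy, and the subcase split you sketch (according to how $m_3,m_4,m_5$ compare with $k-\omega$) is not the mechanism that makes the proof go through.

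The paper's device in the hard case is different: Ann presents \emph{only} $V_1$ first, keeping $V_2$ uncoloured, and then feeds vertices of $V_3$ one at a time while monitoring two running quantities --- the slack $|\mathcal C_2|-|\mathcal N_2|$ of the still-uncoloured $V_2$, and the combined slack $|\mathcal C_4\cup\mathcal C_5|-|\mathcal N_4|-|\mathcal N_5|$. Each colour Ben assigns to a $V_3$-vertex decreases \emph{exactly one} of these two (a colour already on $V_1$ shrinks the $V_4\cup V_5$ slack, any other colour shrinks the $V_2$ slack), so Ann stops presenting $V_3$ the instant one of them reaches zero. A short count using $2k\ge n$ then shows that the \emph{other} adjacent pair still has non-negative combined slack, and the remaining vertices are finished by the simple rule ``present from whichever of the two classes currently has smaller slack''. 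This stopping-condition trick is precisely the mechanism that formalises the trade-off you describe in your last sentence; keeping $V_2$ in reserve during the $V_3$ phase is what reduces the problem to two-class subproblems and avoids the three-way bookkeeping you were anticipating.
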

\begin{proof}
Let the graph $G=\mathbb{K}[C_5](m_1,m_2,m_3,m_4,m_5)$, where $m_i\geq1$ and $V_i=V(K_{m_i})$ for $1\leq i\leq 5$.
Also, let $k$ be a positive integer such that $k\geq\chi(G)$ and let  $\{1,2,\ldots,k\}$ be the set of colors. We shall show that $G$ is $k$-indicated colorable.  Without much difficult, it can be seen that $\alpha(G)=2$.
Suppose $\omega(G)\geq\frac{|V(G)|}{\alpha(G)}=\frac{|V(G)|}{2}$, let Ann start by presenting the vertices of a maximum clique. Without loss of generality, let it be $V_1\cup V_2$.
By our choice of $k$, $k\geq \chi(G)$, and hence  $|V(G)|\leq2\omega(G)\leq 2k$.
Next, let Ann present the vertices of $V_3$ and $V_5$ in any order. Since $V_1\cup V_5$ and $V_2\cup V_3$ induce a clique and $V_1\cup V_2$ is a maximum clique, we see that $|V_5|\leq |V_2|$ and $|V_3|\leq |V_1|$. Thus Ben will have an  available
color for each of the vertex in $V_3$ and $V_5$. Finally, let Ann present  the vertices of $V_4$. As we have already observed, $|V(G)|\leq 2\omega(G)$ and hence $|V_1|+...+|V_5|\leq 2(|V_1|+|V_2|)$ which in turn implies that $|V_3|+|V_4|+|V_5|\leq k$. Thus here also Ben has an available color for the vertices of $V_4$. Hence,   Ann wins the game on $k$ colors.

Now let us consider the case  when $\omega(G)<\frac{|V(G)|}{2}$. We know that, $\frac{|V(G)|}{2}\leq \chi(G)\leq k$. In this case, let Ann first present the vertices of $V_1$. For $2\leq i\leq5$, let $\mathcal{N}_i$ denote the set of all uncolored vertices in $V_i$ and $\mathcal{C}_i$ denote the set of available colors for the vertices in $\mathcal{N}_i$. Let $c(v)$ denote the color given by Ben to the vertex $v$.

The following are some of the observation regarding $\mathcal{N}_i$ and $\mathcal{C}_i$.
\vskip.3cm
\noindent\textbf{Observations 3.6.1}\\
\textsl{(i) Once when the vertices of $V_1$ are colored by Ben, we have the following values.
\begin{equation*}
|\mathcal{C}_i|-|\mathcal{N}_i|=k-|V_1|-|V_i|>0, \text{ for } i\in\{2,5\}
\end{equation*}
\begin{equation*}
 |\mathcal{C}_i|-|\mathcal{N}_i|=k-|V_i|>0, \text{ for } i\in\{3,4\}
\end{equation*}
\begin{equation*}
 |\mathcal{C}_i\cup\mathcal{C}_{i+1}|-|\mathcal{N}_i|-|\mathcal{N}_{i+1}|=k-(|V_i|+|V_{i+1}|)>0, \text{ for } i\in\{2,3,4\}
\end{equation*}
(Note that, since $k>\omega(G)$, all the values given in the above equations are positive. Also note that, we shall use $\{\}^*$ to indicate these values. For instance, $\{|\mathcal{C}_2|-|\mathcal{N}_2|\}^*=k-|V_1|-|V_2|$).\\
(ii) For $2\leq i\leq5$, the sets $\mathcal{N}_i$ and $\mathcal{C}_i$ constantly change during the coloring process.\\
(iii) For $2\leq i\leq5$, if $|\mathcal{C}_i|-|\mathcal{N}_i|\geq0$, then Ben has an available color at that stage for each of the vertex in $V_i$.\\
(iv) On coloring the vertices of $V_i$, the value of $|\mathcal{C}_i|-|\mathcal{N}_i|$
remains unchanged (since both $|\mathcal{C}_i|$ and $|\mathcal{N}_i|$ are reduced by 1).\\
(v) Similarly, on coloring the vertices of $V_i\cup V_{i+1}$, the value of $|\mathcal{C}_i\cup\mathcal{C}_{i+1}|-|\mathcal{N}_i|-|\mathcal{N}_{i+1}|$ remains unchanged.}

\vskip.3cm
Let us now proceed with the other  uncolored vertices, namely, $V_2\cup V_3\cup V_4\cup V_5$. Without loss of generality, let us assume that $|V_3|\geq|V_4|$ and hence $|\mathcal{C}_3|-|\mathcal{N}_3|\leq|\mathcal{C}_4|-|\mathcal{N}_4|$.
Now let Ann present the vertices of $V_3$ until one of the following holds.

\noindent (i). All the vertices of $V_3$ are colored\\
(ii). $|\mathcal{C}_2|-|\mathcal{N}_2|=0$\\
(iii). $|\mathcal{C}_4\cup\mathcal{C}_{5}|-|\mathcal{N}_4|-|\mathcal{N}_{5}|=0$.

\noindent \textbf{Case 1 :} (i) holds.

In this case, we can easily observe that $|\mathcal{C}_4\cup\mathcal{C}_{5}|-|\mathcal{N}_4|-|\mathcal{N}_{5}|\geq0$ and $|\mathcal{C}_i|-|\mathcal{N}_i|\geq0$, $i=2,4,5$. Next, let Ann present the vertices of $V_2$ in any order. Since $|\mathcal{C}_2|-|\mathcal{N}_2|\geq0$, Ben always has an available color for the vertices of $V_2$. Now for presenting the vertices of $V_4\cup V_5$, let Ann follow the following strategy.

Compare $|\mathcal{C}_4|-|\mathcal{N}_4|$ and $|\mathcal{C}_5|-|\mathcal{N}_5|$. Which ever is smaller Ann present an uncolored vertex from that vertex set, namely, $V_4$ or $V_5$. Do this again and again until $\mathcal{N}_4\cup \mathcal{N}_5=\emptyset$.

Note that even after presenting the vertices of $V_2$, $|\mathcal{C}_i|-|\mathcal{N}_i|\geq0$ for $i=4$ and $5$, and $|\mathcal{C}_4\cup\mathcal{C}_5|-|\mathcal{N}_4\cup\mathcal{C}_5|\geq0$. This together with (iv) and (v) of Observation 3.6.1 guarantees that when Ann follows the above strategy for presenting the uncolored vertices in $V_4\cup V_5$, Ben will have an available color for each of these vertices.

\noindent \textbf{Case 2 :} (ii) holds.

For $|\mathcal{C}_2|-|\mathcal{N}_2|=0$, Ben should have colored the vertices of $V_3$ with exactly $\{|\mathcal{C}_2|-|\mathcal{N}_{2}|\}^*=k-|V_1|-|V_2|$ colors which are not given to the vertices of $V_1$. Next, let Ann present the vertices of $V_2$. Since $|\mathcal{C}_2|-|\mathcal{N}_2|=0$, by using (iii) of Observation 3.6.1, Ben has an available color for each of the vertex in $V_2$. One can easily observe that every time a vertex is colored in $V_2$, the value of $|\mathcal{C}_3|-|\mathcal{N}_3|$ is
reduced by 1. Also $|V_2|+|V_3|\leq \omega(G)<k$ and none of the vertex in $V_4$ is colored. Hence $|\mathcal{C}_3|-|\mathcal{N}_3|\geq0$. Now, let Ann present all the uncolored vertices of $V_3$. Again by (iii) of Observation 3.6.1, Ben has an available color for each of the vertex presented.
A similar argument shows that $|\mathcal{C}_4|-|\mathcal{N}_4|\geq0$ and $|\mathcal{C}_5|-|\mathcal{N}_5|\geq0$. As observed earlier, Ben must have colored the vertices of $V_3$ with exactly $\{|\mathcal{C}_2|-|\mathcal{N}_{2}|\}^*=k-|V_1|-|V_2|$ colors which are not given to the vertices of $V_1$. Hence, we see that
$|\{c(v):v\in V_3\}\cap\{c(v):v\in V_1\}|=|V_3|-\{|\mathcal{C}_2|-|\mathcal{N}_2|\}^*=|V_3|-(k-|V_1|-|V_2|)$. Also observe that, the value of
$|\mathcal{C}_4\cup\mathcal{C}_{5}|-|\mathcal{N}_4|-|\mathcal{N}_{5}|$ reduces by one every time Ben colors a vertex of $V_3$ with a color given to a vertex in $V_1$. Now,\\ $|\mathcal{C}_4\cup\mathcal{C}_{5}|-|\mathcal{N}_4|-|\mathcal{N}_{5}|=
 \{|\mathcal{C}_4\cup\mathcal{C}_{5}|-|\mathcal{N}_4|-|\mathcal{N}_{5}|\}^*-|\{c(v):v\in V_3\}\cap\{c(v):v\in V_1\}|  $\\
 \phantom{$|\mathcal{C}_4\cup\mathcal{C}_{5}|-|\mathcal{N}_4|-|\mathcal{N}_{5}|\ $}$ = (k-|V_4|-|V_5|) - ( |V_3|-(k-|V_1|-|V_2|))$\\
 \phantom{$|\mathcal{C}_4\cup\mathcal{C}_{5}|-|\mathcal{N}_4|-|\mathcal{N}_{5}|\ $}$= 2k - |V(G)|\geq0$.

Thus $|\mathcal{C}_4\cup\mathcal{C}_{5}|-|\mathcal{N}_4|-
|\mathcal{N}_{5}|\geq0$, $|\mathcal{C}_4|-|\mathcal{N}_4|\geq0$ and $|\mathcal{C}_5|-|\mathcal{N}_5|\geq0$. Hence Ann can follow the same strategy as given in Case 1 for presenting the vertices in $V_4\cup V_5$ to yield a winning strategy.

\noindent \textbf{Case 3 :} (iii) holds.\\
For $|\mathcal{C}_4\cup\mathcal{C}_{5}|-|\mathcal{N}_4|-|\mathcal{N}_{5}|=0$, Ben should have colored the vertices of $V_3$ with exactly $ \{|\mathcal{C}_4\cup\mathcal{C}_{5}|-|\mathcal{N}_4|-|\mathcal{N}_{5}|\}^* = k-|V_4|-|V_5|$ colors which are given to the vertices of $V_1$.
As observed in Case 2, $|\mathcal{C}_4|-|\mathcal{N}_4|\geq0$ and $|\mathcal{C}_5|-|\mathcal{N}_5|\geq0$.
Thus Ann follow the same strategy as given in Case 1 for presenting the vertices in $V_4\cup V_5$ to yield a winning strategy. Next for $i=2,3$, $|\mathcal{C}_i|-|\mathcal{N}_i|\geq0$. Since $\{c(v):v\in V_3\}\cap\{c(v):v\in V_4\}=\emptyset$ and $|\{c(v):v\in V_1\}\cap\{c(v):v\in V_3\}|=k-|V_4|-|V_5|$,  we have $|\{c(v):v\in V_4\}\cap\{c(v):v\in V_1\}|=|V_1|-\{|\mathcal{C}_4\cup\mathcal{C}_{5}|-|\mathcal{N}_4|-
  |\mathcal{N}_{5}|\}^*=|V_1|-(k-|V_4|-|V_5|)$. As observed in Case 2, the value of $|\mathcal{C}_2\cup\mathcal{C}_{3}|-|\mathcal{N}_2|-|\mathcal{N}_{3}|$
 reduces by 1 every time Ben colors a vertex in $V_4$  with a color given to a vertex in $V_1$. Now,\\
$|\mathcal{C}_2\cup\mathcal{C}_{3}|-|\mathcal{N}_2|-|\mathcal{N}_{3}| = \{|\mathcal{C}_2\cup\mathcal{C}_{3}|-|\mathcal{N}_2|-|\mathcal{N}_{3}|\}^*-|\{c(v):v\in V_4\}\cap\{c(v):v\in V_1\}|$\\
\phantom{$|\mathcal{C}_2\cup\mathcal{C}_{3}|-|\mathcal{N}_2|-|\mathcal{N}_{3}|$} $= k-|V_2|-|V_3|-(|V_1|-(k-|V_4|-|V_5|))$\\
\phantom{$|\mathcal{C}_2\cup\mathcal{C}_{3}|-|\mathcal{N}_2|-|\mathcal{N}_{3}|$} $= 2k-|V(G)|\geq0.$

\noindent Thus again Ann can follow the same strategy as given in Case 1 for presenting the uncolored vertices in $V_2\cup V_3$ to yield a winning strategy.

\noindent Hence $G$ is $k$-indicated colorable for all $k\geq\chi(G)$.
\end{proof}
\noindent Corollaries \ref{cor2},  \ref{p5c4ind}, \ref{split}, \ref{p5hd} and Theorem \ref{p2p3indi} are some of the consequences of Theorem \ref{comexp}.
\begin{cor}\label{cor2}
For $1\leq i\leq 5$, let $m_i$'s be positive integers. Then for the graph $G=$\linebreak $\mathbb{K}[C_5](m_1,m_2,m_3,m_4,m_5)$, $\chi(G)=\max\left\{\omega(G),\left\lceil\frac{|V(G)|}{2}\right\rceil\right\}$.
\end{cor}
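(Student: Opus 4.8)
The plan is to prove the two inequalities $\chi(G)\ge\max\{\omega(G),\lceil|V(G)|/2\rceil\}$ and $\chi(G)\le\max\{\omega(G),\lceil|V(G)|/2\rceil\}$ separately. For the lower bound I would first record the two elementary bounds valid for every graph: $\chi(G)\ge\omega(G)$, and $\chi(G)\ge|V(G)|/\alpha(G)$, the latter because each color class of a proper coloring is an independent set and hence has at most $\alpha(G)$ vertices. Since $\alpha(G)=2$ for $G=\mathbb{K}[C_5]$ (as already noted in the proof of Theorem \ref{comexp}: an independent set meets each clique $V_i$ in at most one vertex, and the cliques it meets form an independent set in $C_5$, which has independence number $2$), the second bound reads $\chi(G)\ge|V(G)|/2$, and integrality upgrades this to $\chi(G)\ge\lceil|V(G)|/2\rceil$. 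Combining the two gives $\chi(G)\ge\max\{\omega(G),\lceil|V(G)|/2\rceil\}$.

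For the upper bound, set $k_0=\max\{\omega(G),\lceil|V(G)|/2\rceil\}$. The cleanest route is to observe that the winning strategy built in the proof of Theorem \ref{comexp} is already valid at $k=k_0$. Indeed, if $\omega(G)\ge|V(G)|/2$ then $k_0=\omega(G)$, and the first case of that proof goes through verbatim since it only uses $k\ge\omega(G)$; otherwise $\omega(G)<|V(G)|/2$, so $k_0=\lceil|V(G)|/2\rceil>\omega(G)$, and the remaining case applies since it only uses $k\ge|V(G)|/2$ together with $k>\omega(G)$, both of which hold at $k=k_0$. Playing Ann's winning game to completion with $k_0$ colors yields a proper coloring of all of $V(G)$ from the palette $\{1,\dots,k_0\}$, whence $\chi(G)\le\chi_i(G)\le k_0$. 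Together with the lower bound this forces $\chi(G)=k_0$, as claimed.

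The one point requiring care is the boundary behaviour at $k=k_0$: one must check that none of the strict inequalities invoked in Theorem \ref{comexp} (in particular the use of $k>\omega(G)$ in the second case) breaks when $k$ is taken to equal $k_0$ rather than to exceed $\chi(G)$, and this is exactly what the case split above settles. As an alternative to leaning on the game, one could instead argue the upper bound directly by producing color sets $C_1,\dots,C_5\subseteq\{1,\dots,k_0\}$ with $|C_i|=m_i$ and $C_i\cap C_{i+1}=\emptyset$ (indices mod $5$): after rotating so that $\omega(G)=m_1+m_2$ one seats $C_1$ and $C_2$ on disjoint blocks and then packs $C_3,C_4,C_5$ into the remaining colors, using that nonconsecutive cliques may share colors. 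This explicit packing around the odd cycle is the routine-but-fiddly computation I would rather avoid, which is precisely why invoking Theorem \ref{comexp} is the preferable plan.
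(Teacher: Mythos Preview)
Your proposal is correct and follows essentially the same approach as the paper: the lower bound comes from $\chi(G)\ge\omega(G)$ together with $\chi(G)\ge|V(G)|/\alpha(G)$ and $\alpha(G)=2$, and the upper bound comes from observing that the strategy in Theorem~\ref{comexp} already succeeds at $k=\max\{\omega(G),\lceil|V(G)|/2\rceil\}$, giving $\chi(G)\le\chi_i(G)\le k_0$. The paper compresses this into the phrase ``if one closely observes Theorem~\ref{comexp}''; your explicit case split verifying that $k_0\ge\omega(G)$ in the first regime and $k_0>\omega(G)$, $2k_0\ge|V(G)|$ in the second is exactly the unpacking of that remark.
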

\begin{proof}
We know that, $\chi(G)\geq\max\left\{\left\lceil\frac{|V(G)|}{2}\right\rceil,\omega(G)\right\}$. If one closely observes Theorem \ref{comexp}, it can be seen that $G$ is $k$-indicated colorable for $k=\max\left\{\omega(G),\left\lceil\frac{|V(G)|}{2}\right\rceil\right\}$. Thus $\chi(G)\leq\chi_i(G)\leq\max\left\{\omega(G),\left\lceil\frac{|V(G)|}{2}\right\rceil\right\}$.
\end{proof}

An immediate consequence of Corollary \ref{cor2} is one of the results proved by J. L. Fouquet et.al.  in \cite{fou}. Namely, for  $G=\mathbb{K}[C_5](m,m,m,m,m)$, $m\geq1$, $\chi(G)=
\left\lceil\frac{5m}{2}\right\rceil=\left\lceil\frac{|V(G)|}{2}\right\rceil$.

%
%

\begin{cor}\label{p5c4ind}
If $G$ is a $\{P_5,C_4\}$-free graph, then $G$ is $k$-indicated colorable for all $k\geq\chi(G)$.
\end{cor}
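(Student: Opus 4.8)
The plan is to reduce to the connected case and then combine the structure theorem (Theorem \ref{p5c4}) with the indicated colorability of chordal graphs and of $\mathbb{K}[C_5]$ (Theorem \ref{comexp}). First I would invoke Theorem \ref{union}: since $\chi(G)$ is the maximum of the chromatic numbers of the components and each component being $k$-indicated colorable for all $k$ above its chromatic number already suffices, it is enough to treat a single connected $\{P_5,C_4\}$-free graph $G$. So assume $G$ is connected, fix $k\geq\chi(G)$, and let $\{1,\dots,k\}$ be the colors. Applying Theorem \ref{p5c4}, write $V(G)=V_1\cup V_2$ with $\langle V_1\rangle$ a $P_5$-free chordal graph and $\langle V_2\rangle=A_1\cup\dots\cup A_l$, where each $A_i\cong\mathbb{K}[C_5]$, each $N(A_i)$ induces a complete subgraph of $V_1$, and $[A_i,N(A_i)]$ is complete. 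If $V_2=\emptyset$ then $G$ is chordal and the conclusion follows from the known result that chordal graphs are $k$-indicated colorable for all $k\geq\chi(G)$ \cite{pan}.

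Assume $V_2\neq\emptyset$. Ann's strategy would proceed in two phases. In Phase~1 she presents the vertices of $V_1$ according to a winning strategy for the indicated coloring game on the chordal graph $\langle V_1\rangle$ with $k$ colors; this is available since $\chi(\langle V_1\rangle)\leq\chi(G)\leq k$. The point to verify is that, as long as no vertex of $V_2$ has been presented, every proper response of Ben in $G$ to a presented $V_1$-vertex is exactly a proper response in $\langle V_1\rangle$, because the only already-colored neighbors of such a vertex lie in $V_1$. Hence Phase~1 is a faithful copy of the game on $\langle V_1\rangle$ and ends with all of $V_1$ properly colored; in particular every $N(A_i)$ is entirely colored.

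In Phase~2 Ann colors the components $A_1,\dots,A_l$ one at a time. Since the $A_i$ are pairwise non-adjacent, the games on distinct components do not interfere, so it suffices to win on each $A_i$ separately. Here is the crucial counting step: every vertex of $A_i$ is adjacent, through the complete join, to each vertex of the clique $N(A_i)$, so exactly the $|N(A_i)|$ distinct colors appearing on $N(A_i)$ are forbidden \emph{uniformly} on all of $A_i$, leaving a common pool of $k-|N(A_i)|$ colors. Because $\langle N(A_i)\cup A_i\rangle$ is the join of a clique of size $|N(A_i)|$ with $\mathbb{K}[C_5]$, it is an induced subgraph of $G$ with chromatic number $|N(A_i)|+\chi(A_i)$; thus $k\geq\chi(G)\geq|N(A_i)|+\chi(A_i)$, giving $k-|N(A_i)|\geq\chi(A_i)$. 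So the remaining pool has size at least $\chi(A_i)$, and after relabeling these colors Theorem \ref{comexp} supplies Ann a winning strategy on $A_i\cong\mathbb{K}[C_5]$. Doing this for every $i$ colors all of $V_2$, and Ann wins on $G$.

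The main obstacle is precisely this budget-and-independence bookkeeping in Phase~2: one must confirm that the color loss on each $A_i$ is uniform (which is exactly where $N(A_i)$ being a clique and $[A_i,N(A_i)]$ being complete are used) and that the inequality $k-|N(A_i)|\geq\chi(A_i)$ always holds, both of which follow from $\langle N(A_i)\cup A_i\rangle\sqsubseteq G$. The rest is routine: Phase~1 is a clean instance of the chordal game, and everything else reduces to invoking Theorem \ref{comexp}, the chordal result of \cite{pan}, and Theorem \ref{union}.
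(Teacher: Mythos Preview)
Your proposal is correct and follows essentially the same approach as the paper: reduce to the connected case via Theorem~\ref{union}, apply the structure theorem (Theorem~\ref{p5c4}), color $V_1$ first using the chordal winning strategy (the paper phrases this via Theorem~\ref{col} and $\mathrm{col}(\langle V_1\rangle)=\omega(\langle V_1\rangle)$, but it is the same point), and then color each $A_i$ with the residual palette of size $k-|N(A_i)|\geq\chi(A_i)$ using Theorem~\ref{comexp}. Your additional remarks that Phase~1 is a faithful copy of the game on $\langle V_1\rangle$ and that the $A_i$ are pairwise non-adjacent are helpful clarifications the paper leaves implicit.
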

\begin{proof}
By Theorem \ref{union}, it is enough to prove the result for a connected $\{P_5,C_4\}$-free graph.
Let $G$ be such a graph. Then by Theorem \ref{p5c4}, $V(G)=V_1 \cup V_2$ such that\\
(i) $\langle V_1\rangle$ is a $P_5$-free graph which is also chordal. \\
(ii) If $V_2 \neq \emptyset $, then $\langle V_2\rangle=A_1\cup A_2\cup\dots\cup A_l$ where each $A_i$ is a $\mathbb{K}[C_5]$, for every $i\in\{1,2,\dots, l\}$ for some $l\geq1$. Also, $\langle N(A_i)\rangle$ is a complete subgraph of $V_1$ and $[A_i, N(A_i)]$ is complete.

Let the color set be $\{1,2,\ldots,k \geq \chi(G)\}$.
Since $\langle V_1\rangle$ is chordal, col$(\langle V_1\rangle)=\omega(\langle V_1\rangle)\leq k$. By Theorem \ref{col}, Ann has a winning strategy for the vertices of $V_1$ using $k$ colors. Let Ann follow this winning strategy for presenting the vertices of $V_1$. Let $Q_i=\langle N(A_i)\rangle\subseteq V_1$, for every $i\in\{1,2,\ldots,l\}$. Clearly $\chi(A_i+Q_i)=\chi(A_i)+\chi(Q_i)\leq \chi(G)\leq k$. Thus $k-\chi(Q_i)\geq\chi(A_i)$, for $1\leq i\leq l$. By Theorem \ref{comexp}, each $A_i$ is $k$-indicated colorable for every $k\geq \chi(A_i)$ and hence Ann has a winning strategy for each $A_i$ while using $k-\chi(Q_i)\geq\chi(A_i)$ colors, for $1\leq i\leq l$. Since $A_i$'s are disjoint, if  Ann presents the vertices of $A_i$'s for $1\leq i\leq l$ by using these winning strategies, Ben cannot create a blocked vertex. Thus $G$ is $k$-indicated colorable for all $k\geq\chi(G)$.
\end{proof}

Recall that a graph $G$ is said to be a split graph, if $V(G)$ can be partitioned in to two subsets such that the subgraph induced by one set is a clique and the other is totally disconnected.
\begin{cor}\label{split}
Let $S_1,S_2,S_3,S_4,S_5$ be the split graphs. The graph $G=C_5(S_1,S_2,S_3,S_4,S_5)$ is $k$-indicated colorable for all $k\geq \chi(G)$.
\end{cor}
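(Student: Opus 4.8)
The plan is to split each $S_i$ into a clique and an independent set, colour all the cliques first using the strategy already available for $\mathbb{K}[C_5]$, and then observe that every remaining (independent) vertex is dominated by an already-coloured clique vertex and so can never be blocked.

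First I would fix, for each $i$, a split partition $V(S_i)=K_i\cup I_i$ (with $K_i$ a clique and $I_i$ independent) such that $K_i\neq\emptyset$ and no vertex of $I_i$ is adjacent to all of $K_i$. Such a partition is obtained from any split partition $(C,S)$ of $S_i$ by repeatedly moving into $C$ a vertex of $S$ that is complete to the current $C$: the clique part stays a clique, the independent part stays independent, and the process halts since $|C|$ only grows. When $C=\emptyset$ the first move (vacuously) puts a vertex into $C$, so the final $K_i$ is non-empty, and by construction every $w\in I_i$ has a private non-neighbour $u^*\in K_i$. (We may assume each $S_i\neq\emptyset$; if some $S_i$ is empty then $\bigcup_j K_j$ instead induces a complete expansion of a path, which is chordal and hence $k$-indicated colorable by Theorem~\ref{col}, and the rest of the argument is unchanged.)

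Next, since $K_i$ is complete, $[K_i,K_{i+1}]$ is complete and $[K_i,K_{i+2}]=\emptyset$, the cliques induce $\langle\bigcup_{i=1}^5 K_i\rangle\cong\mathbb{K}[C_5](|K_1|,\dots,|K_5|)$ with every $|K_i|\ge1$. Being an induced subgraph, $\chi(\langle\bigcup K_i\rangle)\le\chi(G)\le k$, so by Theorem~\ref{comexp} Ann has a winning strategy for colouring $\bigcup K_i$ with $k$ colours. Ann plays this strategy first; as only clique vertices are presented during this phase, the colours forced on their neighbours are exactly those of the stand-alone game on $\mathbb{K}[C_5]$, so the sub-game is won and the cliques receive a proper colouring $c$ with $c(K_i)\cap c(K_{i\pm1})=\emptyset$.

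Finally, Ann presents the vertices of $\bigcup I_i$ in any order. The crucial point is that for $w\in I_i$ with private non-neighbour $u^*\in K_i$ one has $N(w)\subseteq N(u^*)$: the neighbours of $w$ inside $S_i$ lie in $K_i\setminus\{u^*\}\subseteq N(u^*)$ (as $I_i$ is independent and $w$ misses $u^*$), and all other neighbours of $w$ lie in $S_{i-1}\cup S_{i+1}$, each adjacent to $u^*$ through the complete joins. Since $u^*$ is already coloured, the colour $c(u^*)$ occurs on no neighbour of $w$ and is therefore available, so Ben can always colour $w$. Hence no independent vertex is ever blocked and Ann wins with $k$ colours. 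The step I expect to be the real obstacle is precisely the danger that Ben spreads many distinct colours over the two clique-neighbourhoods $K_{i-1}$ and $K_{i+1}$ of an independent vertex; the inclusion $N(w)\subseteq N(u^*)$ is what defeats this, and it is the reason the split partition must be taken with $K_i$ large enough that the private non-neighbour $u^*$ exists.
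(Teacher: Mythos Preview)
Your proof is correct and follows essentially the same approach as the paper: split each $S_i$ into a clique part and an independent part so that every independent vertex has a non-neighbour in the clique, apply Theorem~\ref{comexp} to colour the clique parts (which induce a $\mathbb{K}[C_5]$), and then observe that each remaining vertex $w\in I_i$ always has the colour of its non-neighbour $u^*\in K_i$ available. The paper obtains the needed partition by taking $K_i$ to be a \emph{maximum} clique of $S_i$, while you reach the same conclusion via the iterative enlargement of the clique part; your explicit verification that $N(w)\subseteq N(u^*)$ and your handling of the degenerate case $S_i=\emptyset$ are welcome additions, but the core argument is identical.
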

\begin{proof}
Let $G$ be the graph $C_5(S_1,S_2,S_3,S_4,S_5)$. For $1\leq i\leq5$,  $V(S_i)=V_i\cup U_i$, where $\langle V_i\rangle$ is a maximum clique and $\langle U_i\rangle$ is an independent set respectively.
Let the color set be $\{1,2,\ldots,k\geq\chi(G)\}$. By Theorem \ref{comexp}, there is a winning strategy for the subgraph $C_5(V_1,V_2,V_3,V_4,V_5)$ of $G$ using $k$ colors. Let Ann  follow this winning strategy to present the vertices of $C_5(V_1,V_2,V_3,V_4,V_5)$. Next, let Ann presents the remaining vertices of $G$, namely the vertices in $\cup_{i=1}^5 U_i$,  in any order.
Since each of the vertex  $x\in U_i$, $1\leq i\leq5$, has a non neighbor in $V_i$, the color of that non neighbor in $V_i$ will be available for $x$. Thus Ben cannot create any blocked vertex and hence Ann wins the game on $G$ with $k$ colors.
\end{proof}

An immediate consequence of Theorem \ref{join}, \ref{splitexp} and Corollary \ref{split} is Corollary \ref{p5hd}.

\begin{cor}\label{p5hd}
If $G$ is connected $\{P_5, (\overline{P_2\cup P_3}),\overline{P_5}, {Dart}\}$-free graph that contains an induced $C_5$, then $G$ is $k$-indicated colorable for all $k\geq \chi(G)$.
\end{cor}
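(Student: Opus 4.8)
The plan is to invoke the structural dichotomy of Theorem \ref{splitexp} together with the observation, recorded in the paragraph just after it, that the join factor $H$ is complete, and then to combine the resulting building blocks via the join rule of Theorem \ref{join}. First I would apply Theorem \ref{splitexp} to the connected graph $G$: since $G$ is $\{P_5,(\overline{P_2\cup P_3}),\overline{P_5},Dart\}$-free and contains an induced $C_5$, either $G\cong C_5(S_1,S_2,S_3,S_4,S_5)$ or $G\cong C_5(S_1,S_2,S_3,S_4,S_5)+H$, where each $S_i$ is an induced split subgraph and $H$ is nonempty. In the first case there is nothing further to prove beyond citing Corollary \ref{split}, which already asserts that $C_5(S_1,\ldots,S_5)$ is $k$-indicated colorable for all $k\geq\chi(G)$.

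For the join case, I would first record that $H\cong K_n$ for some $n\geq1$: this is exactly the claim established in the paragraph following Theorem \ref{splitexp}, where two non-adjacent vertices of $H$ together with suitable cycle vertices of the induced $C_5$ yield a forbidden $(\overline{P_2\cup P_3})$. A complete graph $K_n$ is trivially $k$-indicated colorable for every $k\geq n=\chi(K_n)=\chi_i(K_n)$, since under any presentation order Ben can never create a blocked vertex. Meanwhile the expansion factor $G_1:=C_5(S_1,\ldots,S_5)$ is, by Corollary \ref{split}, $k$-indicated colorable for all $k\geq\chi(G_1)$; in particular $\chi_i(G_1)=\chi(G_1)$, so the hypothesis of Theorem \ref{join} that $G_1$ be $k_1$-indicated colorable for all $k_1\geq\chi_i(G_1)$ is met.

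With both factors satisfying the hypotheses of Theorem \ref{join}, I would apply that theorem to $G=G_1+H$ to conclude that $G$ is $k$-indicated colorable for all $k\geq\chi_i(G)=\chi_i(G_1)+\chi_i(H)$. The remaining step is a bookkeeping check that this threshold is the intended $\chi(G)$: since $\chi_i(G_1)=\chi(G_1)$ and $\chi_i(H)=\chi(H)$, while $\chi(G_1+H)=\chi(G_1)+\chi(H)$ for joins, we obtain $\chi_i(G)=\chi(G)$, so the conditions ``$k\geq\chi_i(G)$'' and ``$k\geq\chi(G)$'' coincide and the statement follows in its stated form.

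I do not anticipate a genuine obstacle, as the corollary is essentially an assembly of three earlier results. The only points demanding care are that one must use completeness of $H$ (rather than merely $H\sqsubseteq G$), since Theorem \ref{join} requires $H$ itself to be $k$-indicated colorable for all sufficiently large $k$, a fact we have only for complete graphs; and that one must rely on the ``for all $k\geq\chi$'' strength of Corollary \ref{split} to license phrasing the final bound with $\chi(G)$ in place of $\chi_i(G)$.
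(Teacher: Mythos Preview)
Your proposal is correct and follows exactly the approach the paper intends: it invokes Theorem~\ref{splitexp} (together with the observation that $H$ is complete), Corollary~\ref{split}, and Theorem~\ref{join}, which is precisely the triple of results the paper cites as yielding Corollary~\ref{p5hd} as an immediate consequence. Your write-up simply makes explicit the bookkeeping that the paper leaves to the reader.
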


Now let us consider the indicated coloring of connected $\{P_2\cup P_3,C_4\}$-free graphs.
\begin{thm}
\label{p2p3indi}If $G$ is a connected $\{P_2\cup P_3,C_4\}$-free graph, then $G$ is $k$-indicated colorable for all $k\geq\chi(G)$.
\end{thm}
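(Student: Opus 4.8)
The plan is to invoke the structure theorem for this class, Theorem \ref{p2free}, and to reduce everything to base cases already established: chordal graphs (handled through the coloring number, Theorem \ref{col}) and $\mathbb{K}[C_5]$ (Theorem \ref{comexp}), assembled by the join and union theorems (Theorems \ref{join} and \ref{union}). By Theorem \ref{union} it suffices to treat a connected graph $G$. If $G$ is chordal, then $\mathrm{col}(G)=\omega(G)=\chi(G)$, so Theorem \ref{col} immediately gives that $G$ is $k$-indicated colorable for all $k\geq\chi(G)$.

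Otherwise Theorem \ref{p2free} supplies a partition $(V_1,V_2,V_3)$ with $\langle V_1\rangle\cong\overline{K_m}$, $\langle V_2\rangle\cong K_t$, and $\langle V_3\rangle$ a complete expansion (at the circled vertices) of one of the basic graphs $G_1,\ldots,G_{17}$, together with $[V_1,V_3]=\emptyset$ and $[V_2,V_3\setminus S]$ complete. The first task is to show that $\langle V_3\rangle$ is itself $k$-indicated colorable for all $k\geq\chi(\langle V_3\rangle)$. Here I would run through the finite list of basic graphs and verify that, after expanding the circled vertices into cliques, each resulting graph is either chordal or decomposes as a disjoint union / join of a chordal graph with a single copy of $\mathbb{K}[C_5]$; in every such case Theorems \ref{comexp}, \ref{col}, \ref{join} and \ref{union} combine to give the claim for $\langle V_3\rangle$, the $C_5$ forced by non-chordality sitting inside precisely the $\mathbb{K}[C_5]$ factor.

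To assemble the whole graph I would fix a palette $\{1,\dots,k\}$ with $k\geq\chi(G)$ and have Ann present the vertices in the order $V_2$, then $V_3$, then $V_1$. Presenting the clique $V_2$ first forces Ben to use exactly $t=|V_2|$ distinct colors; since $[V_2,V_3\setminus S]$ is complete, the vertices of $V_3\setminus S$ are thereafter restricted to the remaining $k-t$ colors, while the vertices of $S$ keep the full palette. When $S=\emptyset$ we have $\langle V_2\cup V_3\rangle=K_t+\langle V_3\rangle$, so $k-t\geq\chi(G)-t\geq\chi(\langle V_3\rangle)$, and Ann simply runs the uniform-palette winning strategy for $\langle V_3\rangle$ obtained above with Ben never blocked. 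Finally, every $v\in V_1$ satisfies $N(v)\subseteq V_2$ (as $\langle V_1\rangle$ is independent and $[V_1,V_3]=\emptyset$); if $v$ is complete to $V_2$ then $\langle V_2\cup\{v\}\rangle\cong K_{t+1}$ forces $k\geq\chi(G)\geq t+1$, and in any case $v$ sees at most $t$ colors, so a free color always remains. Thus $V_1$ is coloured last without any blocked vertex and Ann wins.

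The main obstacle is the interaction between $V_2$ and $V_3$ when $S\neq\emptyset$, that is, for the graphs $G_1$--$G_5$: there $V_3\setminus S$ runs on the reduced palette of size $k-t$ while $S$ retains all $k$ colors, so $\langle V_3\rangle$ must be coloured under non-uniform lists rather than a uniform palette, and one must check by hand (using $\chi(G)\geq t+\chi(\langle V_3\setminus S\rangle)$ from the join $K_t+\langle V_3\setminus S\rangle$) that Ben is never blocked on these specific small configurations. Keeping the palette counts consistent with $\chi(G)$ throughout, together with the explicit case analysis over the seventeen expanded basic graphs, is the delicate part of the argument; the chordal case and the attachment of the independent set $V_1$ are routine by comparison.
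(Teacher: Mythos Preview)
Your overall skeleton matches the paper's, but the key structural claim you rely on for $\langle V_3\rangle$ is false, and this is where the argument breaks. It is not true that each expanded basic graph is ``either chordal or a disjoint union / join of a chordal graph with a single $\mathbb{K}[C_5]$''. Among the $G_j$ with no expandable vertices, $G_8$ is the Petersen graph and $G_9$ is a specific $9$-vertex graph with $\chi(G_9)=3$ and $\mathrm{col}(G_9)=5$; neither decomposes in the way you describe, and neither is covered by Theorems~\ref{comexp}, \ref{col}, \ref{join}, \ref{union} alone. The paper handles $G_8$ by citing a separate result from \cite{pan}, and handles $G_9$ by exhibiting explicit Ann-strategies for $k=3$ and $k=4$ (with $k\geq 5$ falling under Theorem~\ref{col}). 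For the remaining $G_j$ with $j\notin\{6,7,8,9\}$ the paper does not attempt any join/union decomposition either; instead it verifies directly that $\mathrm{col}(G_j)=\chi(G_j)$ (these graphs all contain $C_5$, so they are not chordal) and invokes Theorem~\ref{col}.

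Your treatment of the $S\neq\emptyset$ case is also more complicated than necessary. Rather than running a non-uniform-list argument on $V_3$, the paper simply checks that for $j\in\{1,\dots,5\}$ the \emph{whole} graph $G$ (with its $V_1,V_2,V_3$ partition) satisfies $\mathrm{col}(G)=\chi(G)$, so Theorem~\ref{col} disposes of these cases in one stroke. Your order $V_2$, $V_3$, $V_1$ and the final observation about $V_1$ are fine and agree with the paper when $S=\emptyset$.
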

\begin{proof}
Let $G$ be a connected $\{P_2\cup P_3,C_4\}$-free graph. By Theorem \ref{free}, if $G$ is chordal then col$(G)=\omega(G)=\chi(G)$.
By Theorem \ref{col},
$G$ is $k$-indicated colorable for all $k\geq\chi(G)$. Suppose $G$ is not chordal, then there exists a partition $(V_1,V_2,V_3)$ of $V(G)$
such that $\langle V_1\rangle\cong \overline{K_m}$ for some $m\geq0$, $\langle V_2\rangle\cong K_t$ for some $t\geq0$ and $\langle V_3\rangle\cong G_i$ for some $i$, $1\leq i\leq17$ (see Figure \ref{basic}). Let us divide the proof into two cases as follows.

\noindent \textbf{Case 1 : $V_2=\emptyset$}

Since $[V_1,V_3]=\emptyset$ and $G$ is connected, $G\cong G_j$ for some $j$, $1\leq j\leq 17$. Hence it is enough to show that for $1\leq j\leq 17$, $G_j$ is $k$-indicated colorable for all $k\geq\chi(G_j)$.
Let us first consider the $G_j$'s when $j\in\{1,2,\ldots,17\}\backslash\{6,7,8,9\}$.
It is not difficult to observe that for these $j\in\{1,2,\ldots,17\}\backslash\{6,7,8,9\}$,  col$(G_j)=\chi(G_j)$. Hence by Theorem \ref{col}, $G_j$ is $k$-indicated colorable for all $k\geq\chi(G_j)$.

For $j\in\{6,7,8,9\}$, it can be seen that $\mathrm{col}(G_j)\neq \chi(G_j)$, so we consider these graphs separately. The graph $G_6\cong C_6$ and hence $k$-indicated colorable for all $k\geq \chi(G_6)$. The graph $G_7\cong \mathbb{K}[C_5](m_1,m_2,m_3,m_4,m_5)$, where each $m_i\geq1$, $1\leq i\leq 5$ and hence by Theorem \ref{comexp}, $G_7$ is $k$-indicated colorable for all $k\geq\chi(G_7)$.
Next the graph $G_8\cong P$, the Petersen graph. In \cite{pan}, it has been showed that the Petersen graph $P$ is $k$-indicated colorable for all $k\geq \chi(P)$. Finally, let us consider the graph $G_9$. It is easy to check that $\chi(G_9)=3$ and  col$(G_9)=5$. By Theorem \ref{col}, it is enough to show that $G_9$ is 3 and 4-indicated colorable. Let us first consider $G_9$ with 3 colors, namely $\{1,2,3\}$. If  Ann presents the vertices of $G_9$ in the order $p,q,r,s,t,u,v,w,x$, then Ben always has an available color for each of the vertices. Now let us consider $G_9$ with 4 colors, namely $\{1,2,3,4\}$. Let Ann start by presenting the vertices $p,q,r$. Without loss of generality, let Ben color these vertices with 1, 2 and 3 respectively. Now let Ann present the vertex $u$. Suppose Ben colors $u$ with 1 or 4, then Ann will presents the remaining vertices in the order $s,t,v,x,w$. Suppose Ben colors $u$ with 2 or 3, then Ann will presents the remaining vertices in the order $w,x,v,t,s$. This guarantees the fact that Ben cannot block any of the vertex. Thus Ann wins the game with 4 colors.

\noindent \textbf{Case 2 : $V_2\neq\emptyset$}

Recall that $\langle V_2\rangle\cong K_t$ and $\langle V_3\rangle\cong G_j$, $1\leq j\leq17$. Since $V_1$ is independent and $[V_1,V_3]=\emptyset$, we can color the vertices of $V_1$ with one of the colors of $V_3$. Thus $\chi(G)=\chi(K_t+G_j\backslash S)=t+\chi(G_j\backslash S)$, for $j\in\{1,2,\ldots,17\}$. Let us first consider the graphs $G$ for which $\langle V_3\rangle \cong G_j$, $j\in\{1,2,3,4,5\}$, the graph with $S\neq\emptyset$.
Without much difficulty one can show that $\mathrm{col}(G)=\chi(G)$. Thus by Theorem \ref{col},
$G$ is $k$-indicated colorable for all $k\geq\chi(G)$.

Next, let us consider the graphs $G$ for which $\langle V_3\rangle \cong G_j$, $j\in\{6,7,\ldots, 17\}$. We know that, $K_t$ is $k_1$-indicated colorable for all $k_1\geq t$ and by Case 1, $G_j$ is $k_2$-indicated colorable for all $k_2\geq \chi(G_j)$, $j\in\{6,7,\ldots, 17\}$. Hence by Theorem \ref{join}, we see that $\langle V_2\cup V_3\rangle\cong K_t+G_j$ is $k$-indicated colorable for all $k=k_1 + k_2\geq t+\chi(G_j)=\chi(G)$ and hence Ann has a winning strategy for $\langle V_2\cup V_3\rangle$ while using $k$ colors for any $k\geq \chi(G)$. Next, let Ann present the vertices of $V_1$ in any order. Since $V_1$ is independent and $[V_1,V_3]=\emptyset$, the colors used in $V_3$ are available to Ben for each vertex in $V_1$. Thus $G$ is $k$-indicated colorable for all $k\geq\chi(G)$.
\end{proof}

\subsection*{Acknowledgment} For the first author, this research was supported by the
Council of Scientific and Industrial Research, Government of India, File no: 09/559(0096)/2012-EMR-I.
Also, for the third author, this research was supported by the UGC-Basic Scientific Research, Government of India.

\end{titlepage}
\end{document}